\begin{document}

\newtheorem*{theorem*}{Theorem}
\newtheorem{theorem}{Theorem}[section]
\newtheorem{lemma}[theorem]{Lemma}
\newtheorem{corollary}[theorem]{Corollary}
\newtheorem{proposition}[theorem]{Proposition}

\theoremstyle{definition}
\newtheorem{definition}[theorem]{Definition}
\newtheorem{example}[theorem]{Example}

\theoremstyle{remark}
\newtheorem{remark}[theorem]{Remark}
\numberwithin{equation}{section}

\newcommand{\nor}{\Arrowvert}
\newcommand{\field}[1]{\mathbb{#1}}
\newcommand{\R}{\field{R}}
\newcommand{\N}{\field{N}}
\def\l{{\lambda}}
\def\d{{\delta}}
\def\a{{\alpha}}
\def\g{{\gamma}}
\def\de{{\partial}}
\def\L{{\Lambda}}
\def\Om{{\Omega}}
\def\e{{\varepsilon}}
\providecommand{\abs}[1]{\lvert#1\rvert}
\providecommand{\norm}[1]{\lVert#1\rVert}

\title[Symmetry breaking and Morse index..]{Symmetry breaking and Morse index of solutions of nonlinear elliptic problems in the plane}
\thanks{The first two authors are supported by PRIN-2009-WRJ3W7 grant,
S. Neves has been partially supported by FAPESP}

\author[Gladiali]{Francesca  Gladiali}
\address{Dipartimento Polcoming-Matematica e Fisica, Universit\`a  di Sassari  - Via Piandanna 4, 07100 Sassari - Italy.}
\email{fgladiali@uniss.it}
\author[Grossi]{Massimo Grossi}
\address{Dipartimento di Matematica, Universit\`a di Roma
La Sapienza, P.le A. Moro 2 - 00185 Roma- Italy.}
\email{massimo.grossi@uniroma1.it}
\author[Neves]{S\'ergio L. N. Neves}
\address{Departamento de Matem\'atica, Universidade Estadual de Campinas - IMECC,
Rua S\'ergio Buarque de Holanda 651, Campinas-SP 13083-859 - Brazil.}
\email{sergio184@gmail.com}

\begin{abstract}
In this paper we study the problem
\begin{equation}
\begin{cases}\label{i0} \tag{$\mathcal{P}$}
-\Delta u =\left(\frac{2+\alpha}{2}\right)^2\abs{x}^{\alpha}f(\lambda ,u), & \hbox{ in }B_1 \\
u > 0, & \hbox{ in }B_1 \\
 u = 0, & \hbox{ on } \partial B_1
\end{cases}
\end{equation}
where $B_1$ is the unit ball of $\R^2$, $f$ is a smooth
nonlinearity and $\a$, $\l$ are real numbers with $\a>0$. From a
careful study of the linearized operator we compute the Morse
index of some radial solutions to \eqref{i0}. Moreover, using the
bifurcation theory, we prove the existence of branches of nonradial solutions for suitable values of the positive parameter
$\l$. The case $f(\lambda ,u)=\l e^u$ provides more detailed
information.
\end{abstract}
\maketitle

\section{Introduction and main results}
In this paper we study the problem
\begin{equation}\label{a1}
\begin{cases}
-\Delta u = \left(\frac{2+\alpha}{2}\right)^2\abs{x}^{\alpha}f(\lambda ,u), & \hbox{ in }B_1 \\
u > 0, & \hbox{ in }B_1 \\
 u = 0, & \hbox{ on } \partial B_1
\end{cases}
\end{equation}
where $\a,\l$ are real numbers, $\alpha>0$ and $B_1$ is the unit ball of $\R^2$.\\
The nonlinearity $f(t,s)$ satisfies
\begin{equation}\label{j1}
f:(a,b)\times\R^+\rightarrow\R^+,\ a,b\in\R,\quad f\in C^{1,1}_{loc}((a,b)\times[0,+\infty)).
\end{equation}

Problem \eqref{a1} models different type of equations. When $f(\l,s)=s^\l$
with $\l>1$ it is known as H\'enon problem and arises in the study of stellar clusters.
Other interesting examples are given by $f(\l,s)=\l(1+s)^p$ with $p>1$, $\l>0$ and $f(\l,s)=\l e^s$. In this last case problem \eqref{a1} is sometimes known as Liouville equation and arises in the study of vortices of Euler flows in the gauge field theory, when the equation involves singular sources. \\
Observe that the presence of the term $|x|^\a$ allows the existence of nonradial solutions for \eqref{a1} and the constant $\left(\frac{2+\a}2\right)^2$ in \eqref{a1}, in many cases, can be merged into the equation.\\
In the particular case of $f(\l,s)=\l e^s$ problem \eqref{a1} has been studied by many authors.
Del Pino, Kowalczyk and Musso in \cite{DKM} proved the existence of solutions concentrating at
one or more points when $\a$ is fixed and $\l$ is close to zero
 (see also \cite{DAP} for a generalization of this result).
We quote also the results in \cite{B}, \cite{BCLT} and \cite{CL1} where the behavior
of the solutions as $\l\to 0$ is considered. \\
In this paper we are interested in studying existence of nonradial
solutions to \eqref{a1}.  The main tool to get our results will be
the {\em bifurcation theory}. We want to stress that the results
we obtain, both for the general problem \eqref{a1} that for the
special case of $f(\l,s)=\l e^s$, allow $\a$ and $\l$ to vary in
all the range of existence of solutions and not only in a
neighborhood of some specific value (as $\l=0$).

To our knowledge one of the few results where $\l$ varies in all
its range is due to Suzuki \cite{Su}, where he proved the
nondegeneracy of the solution in simply connected domains when
$\a=0$, $f(\l, s) =\l e^s$, $\l\int_\Omega e^u<8\pi$ and
$\l\in(0,\l^*)$. Here $\l^*$ is the maximal value of $\l$ such
that \eqref{a1} has a solution.
 In a similar spirit we will obtain some existence results of solutions to
\eqref{a1}.\\
A crucial role in our analysis is given by the autonomous problem
associated to \eqref{a1},
\begin{equation}\label{i2}
\begin{cases}
-\Delta v =f(\lambda ,v), & \hbox{ in }B_1 \\
v > 0, & \hbox{ in }B_1 \\
 v = 0, & \hbox{ on } \partial B_1.
\end{cases}
\end{equation}
The well known result by Gidas, Ni and Nirenberg (\cite{GNN}) tell us that all
solutions to \eqref{i2} are radial (this is no longer true for
\eqref{a1}).\\

Let us remark that, if we restrict ourselves to considering only
radial solutions, then problems  \eqref{a1} and  \eqref{i2} are
equivalent. This can be easily seen using the map $r\mapsto
r^\frac{2+\a}2$ as pointed out in \cite{CG}
(see also \cite{GGN}, \cite{SSW02} and the proof of Proposition  \ref{a4} ).\\
Let us now turn to illustrate the strategy of our paper: first of
all observe that, in many situations, it is possible to establish
the existence of radial solutions $v_\l$ to \eqref{i2} for some
suitable values $\l\in(a,b)$.\\
Then we get that
\begin{equation} \label{f1}
u_{\l,\a}(r)=v_\l(r^\frac{2+\a}2)
\end{equation}
are radial
solutions to \eqref{a1} for any $\a>0$ and $\l\in(a,b)$. If we look for nonradial solutions which
bifurcate from $u_{\l,\a}$, by the implicit function theorem, the
values of $\a$ and $\l$ must satisfy the degeneracy condition,
\begin{equation}\label{a7}
\begin{cases}
-\Delta w-\left(\frac{2+\alpha}{2}\right)^2\abs{x}^{\alpha}f'(\l,u_{\l,\a})w=0, & \hbox{ in }B_1 \\
 w= 0, & \hbox{ on } \partial B_1
\end{cases}
\end{equation}
for some nontrivial $w\in H^1_0(B_1)$, where $f'(\l,u)=\frac{\partial f}{\partial u}(\l,u)$.

In general the computation of the values $\a$ and $\l$ for which
$u_{\l,\a}$ is degenerate is a very difficult problem. However, in
the case where the solution $v_\l$ of \eqref{i2} has Morse index
$1$, we will be able to characterize them. In the following we set $H$ as the closure of $C^{\infty}_0(0,1)$ with respect to the norm $\nor \eta \nor_{\mathcal{H}}=\int_0^1\left((\eta')^2+ \frac{\eta^2}{r^2}\right)rdr$.
\begin{proposition}\label{a4}
Assume \eqref{j1} and that problem \eqref{i2} has a solution $v_{\lambda}$ for
any $\lambda\in(a,b)$. Then problem \eqref{a1} has a radial
solution
$u_{\lambda,\alpha}(r)=v_{\lambda}\left(r^\frac{2+\a}2\right)$ for
any $\lambda\in(a,b)$ and
$\alpha>0$.\\
Moreover if $v_{\lambda}$ is radially nondegenerate and it has
Morse index $1$, setting  
\begin{equation}\label{a5}
\nu_1(\lambda):=\inf_{\eta\in{\mathcal{H}}\atop \eta\not\equiv0,\
}\frac{\int_0^1r(\eta')^2\,
dr-\int_0^1rf'(\lambda,v)\eta^2\, dr}{\int_0^1 r^{-1}\eta^2 \, dr}
\end{equation}
we have that $u_{\lambda,\alpha}$ is degenerate if and only if
$\l$ and $\a$ satisfy
\begin{equation} \label{a6}
\nu_1(\lambda)=-\frac{4k^2}{(2+\alpha)^2}
\end{equation}
for some integer $k\geq 1$. The solutions of \eqref{a7} corresponding to the values of $\l$
and $\a$ which satisfy \eqref{a6} are given by, in polar
coordinates,
\begin{equation}\label{a6a}
\psi_{\l,\a}(r,\theta)=\widetilde\psi_{1,\l}\left(r^\frac{2+\a}2\right)\left[A\sin
\left(\frac{2+\a}2\sqrt{-\nu_1(\l)}\right)\theta+B\cos \left(\frac{2+\a}2\sqrt{-\nu_1(\l)}\right)
\theta\right]
\end{equation}
for any real constant $A$ and $B$ where $\widetilde\psi_{1,\l}$ is
the function which achieves \eqref{a5}.\\
Else if $v_{\lambda}$ is degenerate for some $\hat\l\in(a,b)$,
with eigenfunction $\tilde \psi_{\hat\l}$, then
$u_{\hat\lambda,\alpha}$ is radially degenerate for any $\a>0$
with eigenfunction $\psi_{\hat\l,\a}(r)=\tilde
\psi_{\hat\l}(r^{\frac{2+\a}2})$.
\end{proposition}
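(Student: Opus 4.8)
The plan is to exploit the change of variables $r\mapsto s=r^{\frac{2+\a}2}$ to pass from \eqref{a1} to \eqref{i2}, and then to decompose the linearized operator \eqref{a7} into Fourier modes in the angular variable. First I would verify the elementary fact that $u_{\l,\a}(r)=v_\l(r^{\frac{2+\a}2})$ solves \eqref{a1}: writing $s=r^{\frac{2+\a}2}$, a direct computation gives $-\Delta_x u_{\l,\a}=\bigl(\frac{2+\a}2\bigr)^2 r^{\a}(-\Delta_s v_\l)(s)=\bigl(\frac{2+\a}2\bigr)^2\abs{x}^{\a} f(\l,u_{\l,\a})$, and the boundary and positivity conditions are immediate since $s=1$ when $r=1$. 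This proves the first assertion for all $\a>0$ and $\l\in(a,b)$.

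For the degeneracy analysis, I would expand any $w\in H^1_0(B_1)$ solving \eqref{a7} in polar coordinates as $w(r,\theta)=\sum_{k\ge 0}\bigl(a_k(r)\cos k\theta+b_k(r)\sin k\theta\bigr)$; each radial coefficient $\phi_k$ then solves the singular ODE $-\phi_k''-\frac1r\phi_k'+\frac{k^2}{r^2}\phi_k-\bigl(\frac{2+\a}2\bigr)^2 r^{\a}f'(\l,u_{\l,\a})\phi_k=0$ with $\phi_k(1)=0$ and the appropriate integrability at $0$. Applying the same substitution $s=r^{\frac{2+\a}2}$ and setting $\phi_k(r)=\eta(s)$, the $\frac{k^2}{r^2}$ term becomes $\frac{4k^2}{(2+\a)^2}\cdot\frac1{s^2}$, so in the $s$ variable the equation reads $-\eta''-\frac1s\eta'+\frac{4k^2}{(2+\a)^2 s^2}\eta=f'(\l,v_\l)\eta$, i.e. $\eta$ is an eigenfunction, with eigenvalue $\mu=-\frac{4k^2}{(2+\a)^2}$, of the weighted problem whose Rayleigh quotient is \eqref{a5}. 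Since $v_\l$ has Morse index $1$ and is radially nondegenerate, the operator $-\Delta-f'(\l,v_\l)$ restricted to radial functions has exactly one negative eigenvalue and no kernel; the associated quadratic form on $\mathcal H$ appearing in \eqref{a5} then has $\nu_1(\l)<0$ as its bottom eigenvalue, with one-dimensional eigenspace spanned by $\widetilde\psi_{1,\l}$, and all other eigenvalues of that problem are $\ge 0$ (this is the standard relation between the eigenvalues of the full Schr\"odinger operator on the disk and those of its radial part after splitting off the angular Laplacian). Hence the $k$-th mode can contribute a nontrivial solution precisely when $\mu=-\frac{4k^2}{(2+\a)^2}$ coincides with $\nu_1(\l)$, which is \eqref{a6}; the $k=0$ mode is excluded by radial nondegeneracy, so $k\ge 1$. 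Unwinding the substitution gives the explicit form \eqref{a6a}, where the angular frequency is $\frac{2+\a}2\sqrt{-\nu_1(\l)}=k$ exactly when \eqref{a6} holds.

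For the last assertion, if $v_{\hat\l}$ is (not necessarily radially) degenerate with eigenfunction $\tilde\psi_{\hat\l}$, then by Gidas–Ni–Nirenberg applied to the linearized equation — or simply because the nonlinearity in \eqref{i2} is autonomous so the kernel of the linearization is radial (the angular modes $k\ge 1$ would force $\nu_k>0$) — the eigenfunction is radial, and pulling back by $s=r^{\frac{2+\a}2}$ produces a radial element of the kernel of \eqref{a7}, namely $\psi_{\hat\l,\a}(r)=\tilde\psi_{\hat\l}(r^{\frac{2+\a}2})$; one checks it lies in $H^1_0(B_1)$ using the change of variables in the norm.

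The main obstacle I anticipate is the functional-analytic bookkeeping around the singular weight $r^{-1}$: one must check that the space $\mathcal H$ is the correct form domain, that the Rayleigh quotient \eqref{a5} indeed has a minimizer $\widetilde\psi_{1,\l}$ (compactness of the relevant embedding, or a direct ODE argument via the substitution reducing to a regular Sturm–Liouville problem on $(0,1)$), and that the correspondence between the spectrum of \eqref{a5} and the negative/zero eigenvalues of the two-dimensional linearized operator is exact, so that no spurious mode $k$ can satisfy \eqref{a6}. Establishing that $\nu_1(\l)$ is simple and isolated, and that it is the only eigenvalue of \eqref{a5} that can be negative, is what makes the "if and only if" sharp.
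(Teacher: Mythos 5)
Your overall architecture is the same as the paper's: pull back the radial solution by $s=r^{\frac{2+\a}2}$, decompose the linearized problem \eqref{a7} into angular modes, and observe that degeneracy at mode $k$ is equivalent to the weighted radial problem in the $s$ variable admitting a solution with the negative value $-\frac{4k^2}{(2+\a)^2}$. The genuine gap is that you treat as ``standard'' and as ``functional-analytic bookkeeping'' exactly the facts that carry the proof: (i) $\nu_1(\l)<0$ and the infimum in \eqref{a5} is attained; (ii) it is simple with a positive minimizer; (iii) it is the \emph{only} negative value for which the weighted problem has a solution in $\mathcal{H}$. None of these follows from the spectral theory you invoke. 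In $\R^2$ there is no Hardy inequality, the weight $|x|^{-2}$ is not compact relative to $H^1_0$, and \eqref{a5} is a singular weighted quotient (denominator $\int_0^1 r^{-1}\eta^2$), not the radial part of the Schr\"odinger operator obtained by ``splitting off the angular Laplacian''; so there is no discrete spectrum to appeal to, and the claim that ``all other eigenvalues of that problem are $\ge 0$'' is precisely what must be proved. The paper devotes Section \ref{appA} to this: Proposition \ref{B1} shows a strictly negative infimum is attained (and Proposition \ref{P1} shows attainment can fail when the infimum is zero, so negativity is essential); Lemma \ref{B3} uses the Morse index~$1$ hypothesis, via an annulus approximation $A_\e$ and the equivalence there of the weighted and unweighted eigenvalue problems, to produce a test function in $\mathcal{H}$ making the form negative (note the first eigenfunction of the linearization is radial, positive at the origin, hence \emph{not} in $\mathcal{H}$, so this step is not immediate), and then proves by an orthogonality/integration-by-parts argument that no second negative value $\nu_2\ne\nu_1$ can occur — without this uniqueness your ``only if'' direction collapses, since a degenerate mode $k\ge1$ only yields \emph{some} negative value admitting a solution of \eqref{fin-fin}. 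Finally, Lemma \ref{B2} shows the minimizer vanishes at $r=0$, which is what lets you pull it back to an admissible element of the kernel of \eqref{a7} and get \eqref{a6a}.

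A secondary point: in the last assertion, Gidas--Ni--Nirenberg does not apply to the linearized equation (the kernel element changes sign); the paper instead cites Lin--Ni for the fact that the kernel of the linearization at $v_{\hat\l}$ is one-dimensional and radial, after which the pullback $\psi_{\hat\l,\a}(r)=\tilde\psi_{\hat\l}(r^{\frac{2+\a}2})$ is indeed immediate, as you say.
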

\begin{remark}\label{rem1}
Due to the lack of the Hardy inequality in $\R^2$, it is not trivial that the infimum in \eqref{a5} is achieved. However, since the Morse index of $v_\l$ is $1$ then we get that
\begin{equation*}
\nu_1(\lambda)<0.
\end{equation*}
Then Proposition  \ref{B1} applies and so the  the infimum in \eqref{a5} is achieved. Note the the condition on the Morse index of $v_\l$ is crucial. Indeed, if $\nu_1(\lambda)=0$, in Proposition \ref{P1}  is provided an example where $\nu_1(\lambda)$ is not attained.
\end{remark}

Equation \eqref{a6} characterizes all the degeneracy points of the
radial solutions $u_{\lambda,\alpha}$. It seems important to emphasize
that the map $r\mapsto r^{\frac{2+\a}2}$, that links the radial
solutions of \eqref{a1} and \eqref{i2}, allows us to say more: it
identifies the degeneracy points in terms of $\nu_1(\l)$ that is
not directly related
with problem \eqref{a1} but just with problem \eqref{i2}.\\
In addition, the quantity $\nu_1(\l) $, in some specific cases,
can be explicitly computed
 as we shall see, for example, in Proposition \ref{p-ni1}.\\
It seems to us that this link has never been highlighted before
 and can bring useful information such as the calculation of the Morse index.\\
The degeneracy result of Proposition \ref{a4} can be generalized
also to solutions $v_\l$ with Morse index greater than $1$. This
case, however, goes beyond this work and we do not treat it.\\
A first consequence of Proposition \ref{a4} is the computation of
the Morse index of the solution $u_{\lambda,\alpha}$.
\begin{theorem}\label{a11}
Let $v_\l$ be a solution of \eqref{i2} with Morse index 1 and $u_{\l,\a}=v_\l(r^{\frac {2+\a}2})$.
Then the Morse index
$m(\lambda,\alpha)$ of the radial solution $u_{\lambda,\a}$ to
\eqref{a1} (see \eqref{f1}) is equal to
\begin{equation}\label{morse-general}
m(\l,\a)=
\begin{cases}
1+ 2\left[ \frac{\alpha+2}{2} \sqrt{-\nu_1(\lambda)}\right] &\hbox{ if } \,\,\, \frac{\alpha+2}{2}\sqrt{-\nu_1(\lambda)} \not\in \N \\
 (\alpha+2) \sqrt{-\nu_1(\lambda)}  -1 &\hbox{ if } \,\,\, \frac{\alpha+2}2 \sqrt{-\nu_1(\lambda)} \in \N
\end{cases}
\end{equation}
where $[x]$ denotes the greatest integer less than or equal to
$x$.\\
Moreover $m(\l,\a)\rightarrow+\infty$ as $\a\rightarrow+\infty$.
\end{theorem}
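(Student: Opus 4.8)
The plan is to reduce the computation of the Morse index of $u_{\lambda,\alpha}$ on $B_1\subset\R^2$ to a one–dimensional eigenvalue problem via separation of variables, and then count eigenvalues using the change of variable $r\mapsto r^{\frac{2+\alpha}{2}}$ exactly as in Proposition~\ref{a4}. First I would decompose $H^1_0(B_1)$ into Fourier modes in the angular variable: writing $w(r,\theta)=\sum_{k\ge0}\bigl(a_k(r)\cos k\theta+b_k(r)\sin k\theta\bigr)$, the quadratic form associated to the linearized operator in \eqref{a7} splits, and on the $k$-th mode it becomes (up to positive constants) $\int_0^1\bigl(r(\phi')^2+\frac{k^2}{r}\phi^2-r\,(\tfrac{2+\alpha}{2})^2 r^\alpha f'(\lambda,u_{\lambda,\alpha})\phi^2\bigr)\,dr$. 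Hence $m(\lambda,\alpha)$ is the sum over $k\ge0$ of the number of negative eigenvalues of the radial operator $L_{k,\alpha}:=-\phi''-\frac1r\phi'+\frac{k^2}{r^2}\phi-(\tfrac{2+\alpha}{2})^2 r^\alpha f'(\lambda,u_{\lambda,\alpha})\phi$, with the $k=0$ mode counted once (radial directions) and each $k\ge1$ mode counted twice (because of $\cos k\theta$ and $\sin k\theta$).

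Next I would analyze each $L_{k,\alpha}$. For $k=0$: since $v_\lambda$ has Morse index $1$ and is radially nondegenerate, and radial solutions of \eqref{a1} and \eqref{i2} correspond under $r\mapsto r^{\frac{2+\alpha}{2}}$ (Proposition~\ref{a4}), $L_{0,\alpha}$ has exactly one negative eigenvalue; this contributes $1$. For $k\ge1$, I would apply the same substitution $s=r^{\frac{2+\alpha}{2}}$ that underlies Proposition~\ref{a4}: it transforms $L_{k,\alpha}$ into an operator of the same type but with the "angular" coefficient $k^2$ replaced by $\frac{4k^2}{(2+\alpha)^2}$ and with the potential $f'(\lambda,v_\lambda)$ of the autonomous problem. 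The negative eigenvalues of the transformed operator are governed by $\nu_1(\lambda)$ from \eqref{a5}: the $k$-th angular mode contributes a negative direction precisely when $\frac{4k^2}{(2+\alpha)^2}<-\nu_1(\lambda)$, i.e. when $k<\frac{2+\alpha}{2}\sqrt{-\nu_1(\lambda)}$, and it becomes degenerate (contributing a zero eigenvalue, not a negative one) exactly at equality, which is condition \eqref{a6}. Here I would need the fact from Remark~\ref{rem1} that $\nu_1(\lambda)<0$ is attained, together with the characterization of the degeneracy in Proposition~\ref{a4}, and a monotonicity argument in $k$ (the bottom eigenvalue of $L_{k,\alpha}$ is strictly increasing in $k$) to ensure there is exactly one negative eigenvalue per admissible $k$ and none for larger $k$.

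Putting this together gives
\[
m(\lambda,\alpha)=1+2\,\#\Bigl\{k\ge1:\ k<\tfrac{2+\alpha}{2}\sqrt{-\nu_1(\lambda)}\Bigr\},
\]
and then it is a routine case split. If $\frac{2+\alpha}{2}\sqrt{-\nu_1(\lambda)}\notin\N$, the number of admissible $k$ is $\bigl[\frac{2+\alpha}{2}\sqrt{-\nu_1(\lambda)}\bigr]$, giving the first line of \eqref{morse-general}. If $\frac{2+\alpha}{2}\sqrt{-\nu_1(\lambda)}=m\in\N$, then $k=m$ yields a zero eigenvalue (the degenerate case, with eigenfunctions \eqref{a6a}) which must be excluded from the count, so the number of admissible $k$ is $m-1$ and $m(\lambda,\alpha)=1+2(m-1)=2m-1=(2+\alpha)\sqrt{-\nu_1(\lambda)}-1$, the second line. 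Finally, since $\sqrt{-\nu_1(\lambda)}>0$ is a fixed positive number independent of $\alpha$, the quantity $\frac{2+\alpha}{2}\sqrt{-\nu_1(\lambda)}\to+\infty$ as $\alpha\to+\infty$, hence $m(\lambda,\alpha)\to+\infty$.

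The main obstacle I anticipate is the careful bookkeeping at the boundary case $\frac{2+\alpha}{2}\sqrt{-\nu_1(\lambda)}\in\N$: one must verify that in this situation the relevant angular mode genuinely contributes a zero eigenvalue rather than a negative one (so it does not increase the Morse index), which requires knowing that $\nu_1(\lambda)$ is exactly the bottom of the spectrum of the transformed radial operator and is attained — this is precisely where the Morse index $1$ hypothesis and Remark~\ref{rem1} are essential — together with ruling out that higher eigenvalues of $L_{k,\alpha}$ for small $k$ could also be negative, which follows from the variational characterization \eqref{a5} as an infimum and the fact that on the autonomous side the operator $-\phi''-\frac1r\phi'-f'(\lambda,v_\lambda)\phi$ has a one-dimensional negative subspace.
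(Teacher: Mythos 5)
Your argument is correct, and it reaches \eqref{morse-general} by a route that is close in spirit to the paper's but technically lighter in its implementation. The paper does not count negative directions of the quadratic form mode by mode; it first replaces the standard eigenvalue problem for the linearization at $u_{\l,\a}$ by the singular weighted problem with weight $|x|^{-2}$ (Lemma \ref{B100}, which rests on Propositions \ref{B1} and \ref{B111} and a domain-perturbation argument on annuli $B_1\setminus B_\e$), then decomposes in spherical harmonics, applies $r\mapsto r^{2/(2+\a)}$, and invokes Lemma \ref{B3} (uniqueness of the negative weighted eigenvalue for the autonomous radial problem) to conclude that any negative eigenvalue $\L_i$ appearing in the $k$-th mode must satisfy $\nu_1(\l)=4(\L_i-k^2)/(2+\a)^2$, whence the count $k<\frac{2+\a}{2}\sqrt{-\nu_1(\l)}$. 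You instead fix the mode first, transform the $k$-th quadratic form directly, and count negative directions using only the infimum characterization \eqref{a5} and the Morse index $1$ hypothesis (a two-dimensional negative subspace in a mode $k\ge1$ would, after discarding the nonnegative Hardy term, give a two-dimensional radial negative subspace for $v_\l$). What your route buys: you bypass the weighted spectral machinery of Section 2 for this theorem, and you do not actually need the attainment of $\nu_1$ — negativity of the infimum suffices, and even the boundary case $\frac{2+\a}{2}\sqrt{-\nu_1(\l)}\in\N$ is settled by the infimum property alone (a negative direction at equality would force the Rayleigh quotient below $\nu_1(\l)$), so your appeal to Remark \ref{rem1} there is dispensable. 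What the paper's route buys is the exact eigenvalue correspondence of Lemma \ref{B100}, which also underlies the explicit degenerate eigenfunctions \eqref{a6a}. Two small points you should make explicit to close the argument: for $k\ge1$ a mode function $\phi(r)\cos k\theta\in H^1_0(B_1)$ automatically satisfies $\int_0^1\phi^2/r\,dr<\infty$, so the transformed function is an admissible competitor in \eqref{a5}; and the $k=0$ mode contributes exactly one negative direction because the first eigenfunction of the linearization at $v_\l$ is radial (simplicity plus radial potential), giving at least one radial negative direction, while Morse index $1$ gives at most one — radial nondegeneracy is not needed for this count.
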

Observe that starting from a solution $v_\l$ of \eqref{i2} of
Morse index $1$ we get a radial solution $u_{\l,\a}$ of \eqref{a1}
which is degenerate in a certain number of points and whose Morse
index increases with alpha.\\
Next result shows that, under some additional assumptions, the
condition \eqref{a6} is also sufficient to get solutions which
bifurcate from the radial one.
\begin{theorem}\label{a15}
Suppose that $f$ satisfies \eqref{j1}, $f(\l,0)\geq 0$ and assume that problem \eqref{i2}
has a solution $v_{\lambda}$ for any $\lambda\in(a,b)$ which is
radially nondegenerate with
Morse index $1$.\\ 
Let $\a>0$ be fixed and set
\begin{equation*}
F_k(\l)=\nu_1(\lambda)+\frac{4k^2}{(2+\alpha)^2}
\end{equation*}
for any integer $k\ge1$. If
\begin{itemize}\label{a16}
\item[i)] there exists, for some $k\ge1$ a real value $\lambda=\lambda(k)\in (a,b)$ at which the
function $F_k(\l)$ changes sign in a neighborhood of $\l=\lambda(k)$;
\item[ii)] for any $k\ge1$
the zeros of $F_k(\l)$ in a neighborhood of
$\lambda(k)$ are isolated,
\end{itemize}
then there exists a branch of nonradial solutions of \eqref{a1}
bifurcating from $(\lambda(k),u_{\lambda(k),\a})$.\\
 Moreover branches of bifurcating solutions related to
different values of $k$ are separated and one of the following alternative holds
\begin{itemize}
\item[-] they are unbounded in $(a,b)\times  C^{1,\g}_0(\bar B_1)$
\item[-] they intersect the curve of radial solutions in another bifurcation point related to the same value of $k$,
\item[-] they meet the boundary of $(a,b)\times  C^{1,\g}_0(\bar B_1)$.
\end{itemize}
\end{theorem}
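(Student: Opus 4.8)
The plan is to recast \eqref{a1} as a Leray--Schauder operator equation, to break the rotational invariance by prescribing a dihedral symmetry so that the kernel at a candidate bifurcation point becomes one dimensional, and then to invoke Rabinowitz's global bifurcation theorem: hypothesis (i) will force the Leray--Schauder index of the radial branch to jump at $\l(k)$, and hypothesis (ii) will make $\l(k)$ an isolated degeneracy point so that this index is defined on both sides. First I set up the functional framework. Extend $f(\l,\cdot)$ to all of $\R$ by $f(\l,s):=f(\l,0)$ for $s\le0$. Since $f>0$ on $\R^+$ and $f(\l,0)\ge0$, any $u\in C^{1,\g}_0(\bar B_1)$ solving the extended equation is $\ge0$ by the weak maximum principle, hence strictly positive by the strong one, so it solves \eqref{a1}; this is exactly where the hypothesis $f(\l,0)\ge0$ is used, and it rules out any ``loss of positivity'' scenario from the global alternative. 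By radial nondegeneracy and the implicit function theorem, $\l\mapsto u_{\l,\a}$ is a $C^1$ curve in $C^{1,\g}_0(\bar B_1)$; writing $u=u_{\l,\a}+w$, problem \eqref{a1} becomes $w=\Phi(\l,w)$ with $\Phi(\l,w):=(-\Delta)^{-1}\!\big[(\tfrac{2+\a}{2})^2\abs{x}^{\a}f(\l,u_{\l,\a}+w)\big]-u_{\l,\a}$, which is compact on $(a,b)\times C^{1,\g}_0(\bar B_1)$ (elliptic regularity, $\abs{x}^{\a}\in L^\infty$, compact embeddings) and satisfies $\Phi(\l,0)=0$ by Proposition~\ref{a4}.

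Next comes the symmetry reduction and the analysis of the kernel. Fix $\a>0$ and the index $k$ of (i), and let $X_k$ be the closed subspace of $C^{1,\g}_0(\bar B_1)$ of functions that are even in $\theta$ and $2\pi/k$--periodic in $\theta$ (the same construction in $H^1_0(B_1)$ is used for the self--adjoint spectral picture below); then $\Phi$ preserves $(a,b)\times X_k$, the radial solutions form the trivial branch, and the angular modes appearing in $X_k$ are $0,k,2k,\dots$. The linearization $K_\l:=D_w\Phi(\l,0)=(-\Delta)^{-1}\!\big[(\tfrac{2+\a}{2})^2\abs{x}^{\a}f'(\l,u_{\l,\a})\,\cdot\,\big]$ is compact, and its eigenvalues, by elliptic regularity, coincide with those of its self--adjoint realization on $H^1_0(B_1)\cap X_k$, hence are real and accumulate only at $0$; by Proposition~\ref{a4}, $I-K_\l$ is non--invertible on $X_k$ precisely when $\nu_1(\l)=-4(jk)^2/(2+\a)^2$, i.e.\ $F_{jk}(\l)=0$, for some integer $j\ge1$, where we use that, $v_\l$ having Morse index $1$, $\nu_1(\l)$ is the only negative number in the spectrum of \eqref{a5} (cf.\ Remark~\ref{rem1}), so no higher eigenfunction can produce degeneracy. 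Since $F_{jk}(\l(k))=4k^{2}(j^{2}-1)/(2+\a)^{2}>0$ for $j\ge2$, only $j=1$ is relevant near $\l(k)$; by (ii) the zero of $F_k$ at $\l(k)$ is isolated, so $\l(k)$ is an isolated degeneracy point of $I-K_\l$ in $X_k$, and the eigenvalue $1$ of $K_{\l(k)}|_{X_k}$ is simple with eigenspace spanned by $\psi_{\l(k),\a}(r,\theta)=\widetilde\psi_{1,\l(k)}\!\big(r^{(2+\a)/2}\big)\cos(k\theta)$ from \eqref{a6a}.

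Now I carry out the index computation and the global continuation. For small $\rho>0$ and $\l$ near but different from $\l(k)$, the Leray--Schauder degree $\deg\big(I-\Phi(\l,\cdot),B_\rho\cap X_k,0\big)$ equals $(-1)^{\sigma(\l)}$, with $\sigma(\l)$ the number, counted with multiplicity, of eigenvalues of $K_\l|_{X_k}$ in $(1,+\infty)$; only finitely many eigenvalues lie near $1$, and by the previous step only the simple eigenvalue $\mu_k(\l)$ with $\mu_k(\l(k))=1$ can cross $1$ near $\l(k)$. Because scaling the potential $f'$ shifts $\nu_1$ monotonically, $\mu_k(\l)-1$ is a fixed--sign multiple of $F_k(\l)$, which by (i) changes sign at $\l(k)$; hence $\sigma$, and with it the degree, takes different values on the two sides of $\l(k)$. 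Rabinowitz's global bifurcation theorem then produces a connected component $\mathcal C_k$ of $\overline{\{(\l,w)\in(a,b)\times X_k:\ w=\Phi(\l,w),\ w\ne0\}}$ containing $(\l(k),0)$, and the Rabinowitz alternative for $\mathcal C_k$ is exactly the stated trichotomy: $\mathcal C_k$ is unbounded in $(a,b)\times C^{1,\g}_0(\bar B_1)$, or it reaches the boundary $\l\in\{a,b\}$, or it returns to the radial branch at another degeneracy point. Translating back through $u=u_{\l,\a}+w$, the solutions on $\mathcal C_k$ are positive (first paragraph) and are nonradial near $(\l(k),0)$ since the branch there is tangent to the nonradial direction $\psi_{\l(k),\a}$, carrying the nonzero angular mode $\cos(k\theta)$. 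Finally, branches attached to different values of $k$ are separated: the bifurcation points $(\l(k),u_{\l(k),\a})$ are pairwise distinct because $\nu_1(\l(k))=-4k^{2}/(2+\a)^{2}$ determines $k$; the leading term of $\mathcal C_k$ near $(\l(k),0)$ has angular symmetry exactly $D_k$; and $\mathcal C_k$ stays inside the $D_k$--invariant space $X_k$, so if it meets the radial branch again this occurs at a degeneracy point of the same $k$--periodic class.

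The main obstacle is the index computation under the weak hypothesis (i). Since $F_k$ is assumed only to change sign at $\l(k)$, with no transversality ($F_k'(\l(k))\ne0$) and no simplicity of the zero, the Crandall--Rabinowitz theorem is unavailable and one must instead follow the degenerate eigenvalue $\mu_k(\l)$ of the compact family $K_\l$ and show it crosses the value $1$ in step with the sign change of $F_k$. This rests on the monotone dependence of $\nu_1$ in \eqref{a5} on the potential and on $\nu_1(\l)$ being a simple, isolated bottom eigenvalue — which is where the Morse--index--one hypothesis and Remark~\ref{rem1} are essential — and, together with (ii) isolating $\l(k)$ among degeneracies, this is what makes the Leray--Schauder degree well defined on both sides and genuinely jump.
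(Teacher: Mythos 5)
Your overall strategy --- rewrite \eqref{a1} as a compact fixed-point equation, restrict to a symmetry class so that the crossing at $\l(k)$ contributes an odd change ($=1$) to the Morse index, deduce a jump of the Leray--Schauder degree, and invoke the Rabinowitz global alternative --- is the same as the paper's, and up to the separation statement it is essentially sound: hypothesis (ii) isolates $\l(k)$ among degeneracy points, the computation $F_{jk}(\l(k))>0$ for $j\ge 2$ together with radial nondegeneracy shows that in your space $X_k$ only the simple $\cos(k\theta)$ eigenvalue can cross, and (i) forces the crossing, hence the degree jump. Two smaller repairs are needed: the nonradiality of the bifurcating solutions near $(\l(k),u_{\l(k),\a})$ should not be argued via ``tangency to $\psi_{\l(k),\a}$'' (degree-theoretic bifurcation gives no tangent direction without a transversality hypothesis), but it does follow from local uniqueness of the radial branch, i.e.\ radial nondegeneracy plus the implicit function theorem in the radial class; and the identification of the sign of $\mu_k(\l)-1$ with the sign of $F_k(\l)$ is best justified as in Theorem \ref{a11} (a negative eigenvalue on mode $k$ exists exactly when $\nu_1(\l)+\frac{4k^2}{(2+\a)^2}<0$, by Lemma \ref{B3}), not by a vague monotonicity-in-the-potential argument.

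The genuine gap is the separation of branches for different $k$, which is part of the statement. You argue it inside the linear fixed-point spaces $X_k$, but $X_k\cap X_h$ is much larger than the set of radial functions: any function that is even and $\frac{2\pi}{\mathrm{lcm}(k,h)}$-periodic in $\theta$ lies in both spaces (for instance a nonradial solution with $\frac{2\pi}{6}$-periodicity belongs to $X_2\cap X_3$). Hence the global branches $\mathcal{C}_k\subset(a,b)\times X_k$ and $\mathcal{C}_h\subset(a,b)\times X_h$ could in principle meet at a nonradial solution of higher symmetry far from the bifurcation points; the observation that the leading term of $\mathcal{C}_k$ at $(\l(k),0)$ has ``exactly'' $k$-fold symmetry is purely local and does not propagate along the branch. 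This is precisely why the paper works not in $X_k$ but in Dancer's cones $\mathcal{C}^k$ (even, $\frac{2\pi}{k}$-periodic, nonnegative, non-increasing in $\theta$ on $[0,\pi/k]$): by \cite{DA} the operator maps the cone into itself (this is where $f(\l,0)\ge 0$ and positivity enter), the fixed point index in the cone still jumps because the Morse index change in the even class is one, the global alternative holds by \cite{DA83}, and --- crucially --- $\mathcal{C}^k\cap\mathcal{C}^h$ contains only radial functions, which is what actually delivers the asserted separation. To complete your proof you would either have to reproduce this cone argument or supply another mechanism preventing the branches in $X_k$ and $X_h$ from merging.
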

In the previous theorem a crucial role is played by the curves
\begin{equation}\label{b18}
\gamma_k=\left\{(\l,\a)\in(a,b)\times(0,+\infty)\hbox{ such that
}\nu_1(\lambda)+\frac{4k^2}{(2+\alpha)^2}=0\right\}.
\end{equation}
For any integer $k$ greater than $1$ we have that each $\gamma_k$
is a smooth curve of $\R^2$. Theorem \ref{a15} says that there
exists a set ${\mathcal U}_k$ containing $\gamma_k$ in
$(a,b)\times(0,+\infty)$ such that if $(\l,\a)\in{\mathcal U}_k$
then the problem \eqref{a1} has at least one nonradial solution
(see figure \ref{fig1}).

\begin{figure}[t!]
\includegraphics[scale=0.6]{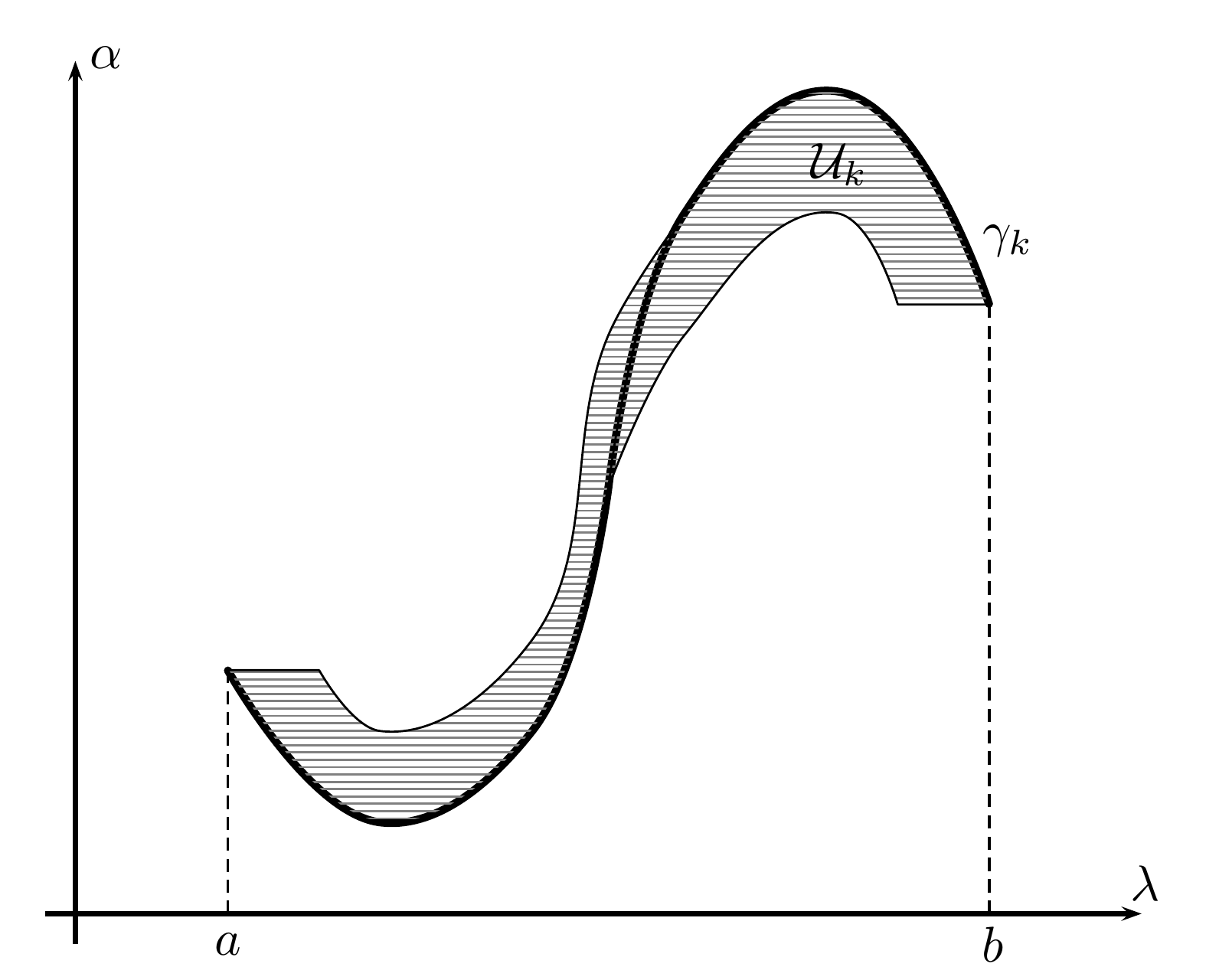}
\caption{}
\label{fig1}
\end{figure}

We observe that in Theorem \ref{a15} we can obtain a similar
result by fixing $\lambda$ in $(a,b)$ and using $\alpha$ as a
parameter. Repeating the proof we get new bifurcation branches of
solutions to \eqref{a1} related to $\lambda$ and $\alpha$ but we
cannot claim that the new set of parameters ${\mathcal U}_k$ is
larger (see Remark \ref{R1}).\\[5pt]

Next we consider a special case where to apply the results of
Theorem \ref{a15}, namely $f(\l, s) =\l e^s$. Here we will get
 more accurate information than those provided in Theorems
\ref{a4}-\ref{a15}.\\
 First of all,  we recall that all radial solutions to \eqref{i2}
 are given by, for $\l\in(0,2)$,
\begin{equation}\label{sol}
v_\l(r) = \log\left(
\frac{8{\delta_\l}}{\l({\delta_\l}+r^2)^2}\right) \,
\hbox{where } \, {\delta_\l}={\delta_\l}^{\pm} = \frac{4-\l \pm
\sqrt{16 - 8\l}}{\l}.
\end{equation}
The solution corresponding to
$\delta_\l^+$ is the {\em minimal one} while that corresponding to
$\delta_\l^-$ has {\em Morse index} $\,1$ and they give rise to radial solutions to \eqref{a1} (see Theorem \ref{b1}). As before we set
$u_{\l,\a}(r)=v_\l(r^\frac{2+\a}2)$ where $v_{\l}$ is the solution in \eqref{sol} corresponding to $\delta_{\l}^{-}$ which has Morse index $1$.\\
Our next result deals with the function $\nu_1(\lambda)$ in \eqref{a5},
that we are able to compute explicitly.
\begin{proposition}\label{p-ni1}
Set $f(\l,s)=\l e^s$ and let $v_{\l}$ be the unique radial
solution of \eqref{i2} with Morse index $1$. Then
\begin{equation}\label{ni-exp}
\nu_1(\l)=\frac{\l-2}2
\end{equation}
and the function which achieves $\nu_1(\l)$ is given by
\begin{equation}\label{b5}
\tilde\psi_{1,\l}(r)=r^{\frac{\sqrt{4-2\l}}2}
\,\frac{2(1-r^4)+(1-r^2)^2\sqrt{4-2\l}}{\l(1-r^2)^2+8r^2}.
\end{equation}
\end{proposition}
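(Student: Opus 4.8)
The plan is to compute the infimum in \eqref{a5} directly, exploiting the explicit form of $v_\l$ in \eqref{sol}. First I would record that for $f(\l,s)=\l e^s$ one has $f'(\l,v_\l)=\l e^{v_\l}=\tfrac{8\d_\l}{(\d_\l+r^2)^2}$, so the quadratic form in the numerator of \eqref{a5} is the one-dimensional radial form associated to the operator $L\eta=-\tfrac1r(r\eta')'-\tfrac{8\d_\l}{(\d_\l+r^2)^2}\eta$ on $(0,1)$. Minimizing $\bigl(\int_0^1 r(\eta')^2-\int_0^1 r f'(\l,v_\l)\eta^2\bigr)/\int_0^1 r^{-1}\eta^2$ is a weighted eigenvalue problem: the minimizer $\widetilde\psi_{1,\l}$ satisfies the Euler--Lagrange equation
\begin{equation}\label{prop-el}
-\bigl(r\widetilde\psi_{1,\l}'\bigr)'-r\,\frac{8\d_\l}{(\d_\l+r^2)^2}\widetilde\psi_{1,\l}=\nu_1(\l)\,\frac{\widetilde\psi_{1,\l}}{r}\quad\text{in }(0,1),
\end{equation}
with $\widetilde\psi_{1,\l}(1)=0$ and $\widetilde\psi_{1,\l}$ in $\mathcal H$; equivalently, writing $\nu_1(\l)=-\mu^2$ with $\mu>0$ (legitimate by Remark \ref{rem1}, since the Morse index $1$ forces $\nu_1(\l)<0$), the function $\Psi(x):=\widetilde\psi_{1,\l}(|x|)\,e^{\pm i\mu\theta}$ solves the linearized Liouville equation $-\Delta\Psi=\l e^{v_\l}\Psi$ on $B_1$ with $\Psi=0$ on $\partial B_1$. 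So the task reduces to: among the angular modes $e^{ij\theta}$ of the linearized Liouville operator, identify the one with the smallest $j=\mu$ admitting a solution vanishing on $\partial B_1$, i.e. the first degeneracy mode of $u_{\l,\a}$ for the borderline value.

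The key computational input is the explicit kernel of the linearized Liouville operator on $\R^2$. Since $v_\l(x)=\log\frac{8\d_\l}{\l(\d_\l+|x|^2)^2}$ is (a dilation of) the standard bubble, the equation $-\Delta\Phi=\l e^{v_\l}\Phi=\frac{8\d_\l}{(\d_\l+|x|^2)^2}\Phi$ has the well-known bounded solutions generated by scaling and translation: $\Phi_0=\frac{\d_\l-|x|^2}{\d_\l+|x|^2}$ (mode $j=0$) and $\Phi_1^\pm=\frac{x_1\pm i x_2}{\d_\l+|x|^2}$ (mode $j=1$). For a general real index $j$ one separates variables, $\Phi=g(r)e^{ij\theta}$, and $g$ solves the ODE
\begin{equation}\label{prop-ode}
g''+\frac1r g'-\frac{j^2}{r^2}g+\frac{8\d_\l}{(\d_\l+r^2)^2}g=0,
\end{equation}
a hypergeometric-type equation whose solution regular at $r=0$ behaves like $r^{j}$ there. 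I would solve \eqref{prop-ode} explicitly (the substitution $t=\frac{r^2-\d_\l}{r^2+\d_\l}$ or $t=\frac{r^2}{\d_\l+r^2}$ turns it into a hypergeometric/Legendre equation whose solutions are elementary for these parameters), obtaining the regular solution $g_j(r)$ in closed form. Imposing $g_j(1)=0$ yields an equation in $j$ whose smallest positive root is $\mu=\sqrt{-\nu_1(\l)}$. Carrying the algebra through with $\d_\l=\d_\l^-=\frac{4-\l-\sqrt{16-8\l}}{\l}$ — equivalently $\sqrt{16-8\l}=4-\l\d_\l$ and $8\d_\l=\l(\d_\l^2+? )$ type identities — should collapse the boundary condition to $\mu^2=\frac{2-\l}{2}$, i.e. \eqref{ni-exp}; then substituting back $r\mapsto r$ (no change of variable needed here since $\nu_1$ lives on the $v_\l$-side) and normalizing gives the explicit minimizer. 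A useful cross-check is that \eqref{b5} must satisfy \eqref{prop-el} with $\nu_1=\frac{\l-2}2$, vanish at $r=1$ (the numerator $2(1-r^4)+(1-r^2)^2\sqrt{4-2\l}$ clearly does), have the right $r^{\mu}=r^{\sqrt{4-2\l}/2}$ behavior at the origin, and lie in $\mathcal H$; verifying these is a routine but reassuring computation, and by the variational characterization together with positivity of $\widetilde\psi_{1,\l}$ (first weighted eigenfunction does not change sign) it indeed realizes the infimum.

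I expect the main obstacle to be purely computational: integrating \eqref{prop-ode} in closed form and then simplifying the transcendental boundary condition $g_\mu(1)=0$ using the two definitions of $\d_\l^\pm$ so that it reduces to the clean identity $\mu^2=\frac{2-\l}{2}$ — in particular one must check that it is the root $\d_\l^-$ (Morse index $1$) and not $\d_\l^+$ that produces a positive $\mu$, consistent with the minimal solution being nondegenerate in this direction. A secondary subtlety is justifying that this smallest admissible $j$ is actually the value of the infimum \eqref{a5} and not merely a critical value: here I would invoke that, $\nu_1(\l)$ being negative (Remark \ref{rem1}) the infimum is attained (Proposition \ref{B1}) by some $\widetilde\psi_{1,\l}\in\mathcal H$, which then solves \eqref{prop-el}; being a minimizer it can be taken nonnegative, hence corresponds to the lowest angular index with a sign-definite radial profile, which is exactly the regular solution $g_\mu$ with $\mu$ the first positive root of $g_\mu(1)=0$. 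Matching with the explicit $g_\mu$ then pins down $\nu_1(\l)=\frac{\l-2}2$ and formula \eqref{b5}.
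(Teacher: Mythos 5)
Your plan is essentially the paper's own proof: write the Euler--Lagrange equation \eqref{prop-el} for the minimizer (whose existence follows from Remark \ref{rem1} and Proposition \ref{B1}), pass to the variable $\xi=\frac{\delta_\l-r^2}{\delta_\l+r^2}$ (your $t$ is $-\xi$), recognize a Legendre-type equation whose solutions are elementary, namely $\bigl(\frac{1+\xi}{1-\xi}\bigr)^{\gamma/2}(\xi-\gamma)$ with $\gamma^2=-\nu_1(\l)$, and impose the boundary condition at $r=1$; undoing the change of variable gives exactly \eqref{b5} and $\nu_1(\l)=\frac{\l-2}{2}$. One caveat on your closing step: the selection rule ``smallest positive root of $g_j(1)=0$'' is backwards in principle, since the infimum \eqref{a5} is the \emph{most negative} admissible $\nu$, i.e.\ the \emph{largest} admissible $\mu=\sqrt{-\nu_1}$; here this is harmless because the boundary condition has a unique positive root, but the clean way to conclude --- and what the paper does --- is to invoke Lemma \ref{B3} (there is exactly one negative $\nu$ for which \eqref{fin-fin} has an $\mathcal{H}$-solution, it is simple, and its eigenfunction is positive), together with Lemma \ref{B2} to discard the singular $r^{-\mu}$ branch, so that exhibiting the explicit positive solution identifies $\nu_1(\l)$ and $\tilde\psi_{1,\l}$ at once.
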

The computation of \eqref{ni-exp} will be done using the {\em
generalized Legendre equation} which turns out to play a natural role in
solving the linearized equation to \eqref{i2} at $v_\l$ (this was
pointed out in \cite{B} and \cite{GG}).\\
Using this result we can calculate exactly the Morse index of the
radial solution $u_{\l,\a}$ and also the values of $\l$
where the bifurcation occurs applying Theorems \ref{a11}-\ref{a15} (see Theorems \ref{t1.5}-\ref{texp-bif1-bis}).\\
Let us state the bifurcation result for the case of the exponential nonlinearity, where the constant $\left(\frac{\a+2}2\right)^2$ is merged into the equation, i.e.
\begin{equation}\label{exp}
\begin{cases}
-\Delta u = \mu\abs{x}^{\alpha}e^u, & \hbox{ in }B_1 \\
u > 0, & \hbox{ in }B_1 \\
 u = 0, & \hbox{ on } \partial B_1
\end{cases}
\end{equation}
where $\mu\in\left(0,\frac{(2+\a)^2}2\right)$.
It is worth noting that using some $L^\infty$ estimates proved in
\cite{CL} and \cite{BCLT} we give a more detailed description of
the branches of nonradial solutions. This leads to the following
\begin{theorem}
\label{texp-bif1} Let $\a>0$ be fixed and let $u_{\mu,\a}$ be the radial solution of \eqref{exp} which is not minimal (see \eqref{b2b}).
There are $j$ values
\begin{equation}\label{b8}
\mu_k=\frac{(2+\a)^2}2-2k^2\hbox{ for }k=1,\dots,j,\hbox{ with }
j=\begin{cases}
1+\left[\frac \a2\right] & \hbox{if } \frac \a2\notin \N\\
\frac \a2 &\hbox{ if } \frac \a2\in \N
 \end{cases}
\end{equation}
such that there exists a branch of nonradial solutions of
\eqref{exp} bifurcating from $(\mu_k,u_{\mu_k,\a})$. The
bifurcation is global, the branches are separated and
solutions blow up at $\mu=0$.\\
Furthermore, for any $\mu\in (0,\mu_j)$ problem \eqref{exp} has at
least $j$ nonradial solutions.\\
Finally, for any $\mu\in (\mu_{k+1},\mu_k)$ problem \eqref{exp}
has at least $k$ nonradial solutions.
\end{theorem}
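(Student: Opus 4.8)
The plan is to obtain Theorem \ref{texp-bif1} by specializing Theorems \ref{a11} and \ref{a15} and Proposition \ref{p-ni1} to $f(\l,s)=\l e^s$. First I would absorb the constant: for $f(\l,s)=\l e^s$ the equation in \eqref{a1} reads $-\Delta u=\frac{(2+\a)^2}{4}\l\abs{x}^\a e^u$, so the change of parameter $\mu=\frac{(2+\a)^2}{4}\l$ turns \eqref{a1} into \eqref{exp} and identifies $\l\in(0,2)$ with $\mu\in\bigl(0,\frac{(2+\a)^2}{2}\bigr)$. Under this identification the non-minimal radial solution $u_{\mu,\a}$ of \eqref{exp} is $v_\l(r^{\frac{2+\a}2})$, where $v_\l$ is the Morse index $1$ solution in \eqref{sol} attached to $\delta_\l^-$; by Theorem \ref{b1} this $v_\l$ exists for every $\l\in(0,2)$, is radially nondegenerate, has Morse index $1$, and $f(\l,0)=\l\ge0$, so the hypotheses of Theorems \ref{a11} and \ref{a15} hold with $(a,b)=(0,2)$.

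By Proposition \ref{p-ni1} we have $\nu_1(\l)=\frac{\l-2}{2}$, hence
\[
F_k(\l)=\nu_1(\l)+\frac{4k^2}{(2+\a)^2}=\frac{\l-2}{2}+\frac{4k^2}{(2+\a)^2}
\]
is affine in $\l$ with slope $\tfrac12\ne0$; it changes sign at its unique zero $\l(k)=2-\frac{8k^2}{(2+\a)^2}$, which is therefore isolated. Thus conditions (i)--(ii) of Theorem \ref{a15} are satisfied for exactly those integers $k\ge1$ with $\l(k)\in(0,2)$, i.e. $k<1+\frac{\a}{2}$, and counting these $k$ gives the integer $j$ of \eqref{b8}. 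Carrying $\l(k)$ through $\mu=\frac{(2+\a)^2}{4}\l$ produces $\mu_k=\frac{(2+\a)^2}{2}-2k^2$, and Theorem \ref{a15} then yields, for each $k=1,\dots,j$, a global branch $\mathcal C_k$ of nonradial solutions of \eqref{exp} bifurcating from $(\mu_k,u_{\mu_k,\a})$, with branches attached to different $k$ separated and with the Rabinowitz trichotomy.

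The heart of the matter is to control each $\mathcal C_k$ globally. Since $F_k$ is affine, its only zero is $\l(k)$, so the radial curve carries no second bifurcation point related to the same $k$; together with the separation of the branches this excludes the reconnection alternative, and hence $\mathcal C_k$ is either unbounded or meets the boundary of $\bigl(0,\frac{(2+\a)^2}{2}\bigr)\times C^{1,\g}_0(\bar B_1)$ --- that is, along $\mathcal C_k$ either the $C^{1,\g}$ norm is unbounded, or $\mu\to0$, or $\mu\to\frac{(2+\a)^2}{2}$. At this point I would bring in the $L^\infty$ estimates of \cite{CL} and \cite{BCLT}: solutions of \eqref{exp} are uniformly bounded for $\mu$ in compact subsets of $\bigl(0,\frac{(2+\a)^2}{2}\bigr]$, no positive solution exists for $\mu\le0$, and the only solution at $\mu=0$ is $u\equiv0$. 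These rule out blow-up while $\mu$ stays bounded below, and they also prevent $\mathcal C_k$ from accumulating at $\mu=\frac{(2+\a)^2}{2}$, where a limit of the (nonradial) branch solutions would be forced to equal the radial solution $u_{2,\a}$, a contradiction. Therefore $\mathcal C_k$ must reach $\mu=0$; and along a sequence on it with $\mu\to0$ the solutions blow up, since otherwise the implicit function theorem at the nondegenerate pair $(\mu,u)=(0,0)$ would force them to coincide near $\mu=0$ with the minimal radial solution, contradicting nonradiality. This gives the global bifurcation together with the blow-up at $\mu=0$.

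Finally the multiplicity statements follow by a connectedness count. Each $\mathcal C_k$ is connected, contains $(\mu_k,u_{\mu_k,\a})$ and has $\inf_{\mathcal C_k}\mu=0$, so its projection on the $\mu$-axis contains the whole interval $(0,\mu_k]$; moreover $\mathcal C_k$ meets the set of radial solutions of \eqref{exp} only at $(\mu_k,u_{\mu_k,\a})$, so for $\mu\ne\mu_k$ every solution on $\mathcal C_k$ is nonradial. Since $\mu_1>\mu_2>\dots>\mu_j$, for $\mu\in(\mu_{k+1},\mu_k)$ (with the convention $\mu_{j+1}:=0$) each of $\mathcal C_1,\dots,\mathcal C_k$ contains a nonradial solution at that value of $\mu$, and these are pairwise distinct because the branches are separated; this produces at least $k$ nonradial solutions for such $\mu$, and at least $j$ when $\mu\in(0,\mu_j)$. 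I expect the delicate step to be the global analysis of the third paragraph --- extracting from the abstract Rabinowitz alternatives the precise conclusion that each branch runs all the way down to $\mu=0$, which is exactly where the non-concentration $L^\infty$ bounds of \cite{CL} and \cite{BCLT} and the nonradial nondegeneracy of the extremal solution $u_{2,\a}$ are needed.
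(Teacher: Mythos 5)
Your proposal is correct and follows essentially the same route as the paper: the paper first proves the $\l$-version (Theorem \ref{texp-bif1-bis}) by feeding Proposition \ref{p-ni1} into Theorem \ref{a15}, rules out reconnection to the radial curve and termination at $\l=0$ or $\l=2$, and then uses the a priori bound of Theorem \ref{t-CL} (from \cite{CL}, \cite{BCLT}) to conclude, exactly as you do, that each branch reaches $\mu=0$ with blow-up there, whence the multiplicity count. The one step you leave under-justified is the exclusion of termination at $\mu=\frac{(2+\a)^2}{2}$: that a limit of nonradial branch solutions would be the radial solution is not by itself a contradiction (this is precisely what happens at a bifurcation point), and the paper's reason is that the unique solution at $\l=2$ is degenerate only in the radial mode, so it cannot be an accumulation point of nonradial solutions.
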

As before let us introduce the curves
\begin{equation}\label{gammak}
\gamma_k=\left\{(\mu,\a)\in\left(0,\frac{(2+\a)^2}2\right)\times(0,+\infty)\hbox{
such that }2\mu=(2+\alpha)^2-4k^2\right\}
\end{equation}
\begin{remark}\label{t-biffin}
As a consequence of Theorem \ref{texp-bif1} we get, for any $k\ge1$ and for any fixed $\a$, the existence of a branch starting from $\gamma_k$ that reaches $\mu = 0$. Then varying $\a$ we recover the region $(\mu,\a)\in\left(0,\frac{(2+\a)^2}2\right)\times(0,+\infty)$ such
that $(\mu,\a)$ lies to the left of $\gamma_k$. In this region there exist at least $k$ nonradial solutions to \eqref{exp} (see figure \ref{fig2}).
\end{remark}

\begin{figure}[t!]
\includegraphics[scale=0.5]{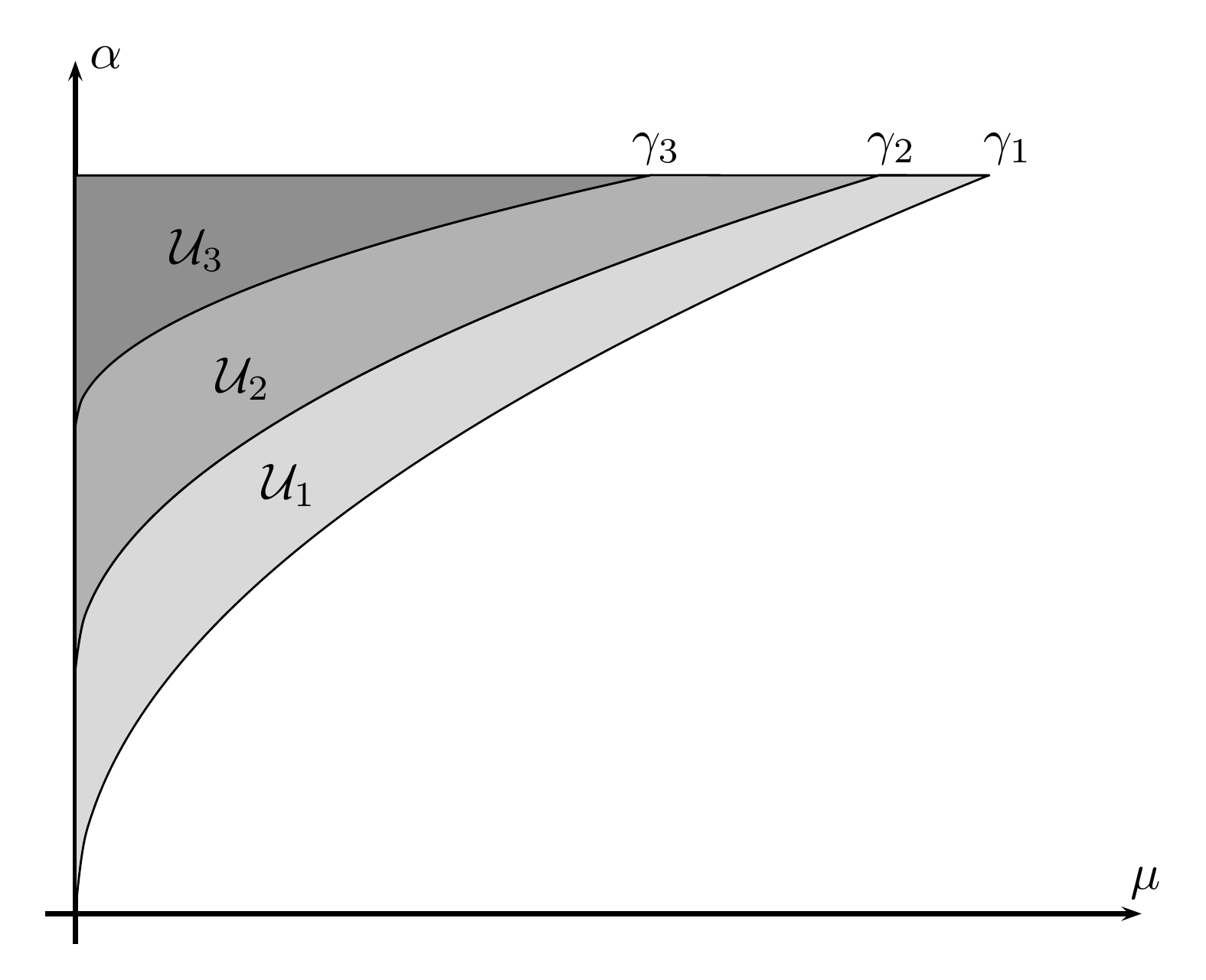}
\caption{$\mathcal{U}_k$ are the pairs to the left of $\gamma_k$}
\label{fig2}
\end{figure}

We end comparing Theorem \ref{t-biffin} with other known existence
results. To do this we need the following estimate, which is, in
our opinion, interesting in itself.
\begin{proposition}\label{stima-pohozaev}
Any smooth solution $u$ of \eqref{exp} must satisfy
\begin{equation}\label{stima-e^u}
2\pi\left(2+\a-\sqrt{(2+\a)^2-2\mu}\right)\leq
\mu\int_{B_1}|x|^{\a}e^u\leq
2\pi\left(2+\a+\sqrt{(2+\a)^2-2\mu}\right).
\end{equation}
\end{proposition}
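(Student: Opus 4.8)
The plan is to play off two identities for the quantity $I:=\mu\int_{B_1}\abs{x}^{\a}e^u\,dx$ against each other. One comes from integrating the equation, the other is of Pohozaev type; coupling them through the Cauchy--Schwarz inequality produces a quadratic inequality in $I$ whose solution set is exactly the interval in \eqref{stima-e^u}. Throughout, $\mu<\frac{(2+\a)^2}{2}$ guarantees that $(2+\a)^2-2\mu>0$.

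First I would integrate the equation over $B_1$. Since $-\Delta u=\mu\abs{x}^{\a}e^u$ and $u=0$ on $\partial B_1$ while $u>0$ inside, Hopf's lemma gives $\partial_\nu u<0$ on $\partial B_1$, and the divergence theorem yields
\begin{equation}\label{pohz-a}
I=-\int_{\partial B_1}\partial_\nu u\,d\sigma=\int_{\partial B_1}\abs{\partial_\nu u}\,d\sigma .
\end{equation}
Next I would multiply the equation by $x\cdot\nabla u$ and integrate over $B_1$. On the left the standard computation in dimension $n=2$ annihilates the bulk term $\left(1-\tfrac n2\right)\int_{B_1}\abs{\nabla u}^2$, and since $u=0$ on $\partial B_1$ one has $\nabla u=(\partial_\nu u)\nu$ and $x\cdot\nu=1$ there, so only $-\tfrac12\int_{\partial B_1}(\partial_\nu u)^2\,d\sigma$ remains. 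On the right I would write $(x\cdot\nabla u)e^u=x\cdot\nabla(e^u)$ and integrate by parts against the weight $\abs{x}^{\a}$, using $\operatorname{div}(\abs{x}^{\a}x)=(2+\a)\abs{x}^{\a}$ together with $e^u\equiv1$ on $\partial B_1$ and $\abs{\partial B_1}=2\pi$; this gives $2\pi\mu-(2+\a)I$. The interior integration by parts near the origin is harmless because $\a>0$ and $u$ is smooth. Equating the two sides,
\begin{equation}\label{pohz-b}
\tfrac12\int_{\partial B_1}(\partial_\nu u)^2\,d\sigma=(2+\a)I-2\pi\mu .
\end{equation}

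Finally, applying the Cauchy--Schwarz inequality to \eqref{pohz-a} and inserting \eqref{pohz-b},
\begin{equation*}
I^2\le\abs{\partial B_1}\int_{\partial B_1}(\partial_\nu u)^2\,d\sigma=4\pi\bigl[(2+\a)I-2\pi\mu\bigr],
\end{equation*}
that is $I^2-4\pi(2+\a)I+8\pi^2\mu\le0$. The discriminant being positive, $I$ must lie between the two roots $2\pi\bigl((2+\a)\mp\sqrt{(2+\a)^2-2\mu}\bigr)$, which is precisely \eqref{stima-e^u}.

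I do not expect any step here to be a genuine obstacle; the only points that need care are the sign $\partial_\nu u<0$ on $\partial B_1$ --- which is exactly where the hypothesis $u>0$ in $B_1$ (hence Hopf's lemma) enters --- and correctly carrying the weight $\abs{x}^{\a}$ through the boundary terms of the Pohozaev identity, all of which collapse neatly because $\abs{x}=1$ and $u=0$ on $\partial B_1$.
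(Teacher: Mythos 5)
Your proposal is correct and follows essentially the same route as the paper: a Pohozaev identity (multiplier $x\cdot\nabla u$) combined with the divergence theorem and the Cauchy--Schwarz inequality on $\partial B_1$, yielding the quadratic inequality $I^2-4\pi(2+\a)I+8\pi^2\mu\le 0$ whose roots are exactly the two bounds in \eqref{stima-e^u}. The paper phrases the extremes as the values of $\mu\int_{B_1}|x|^\a e^u$ at the two radial solutions $u_\pm$, but the underlying computation is the one you give.
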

The bound on $\mu\int_{B_1}|x|^{\a}e^u$  in \eqref{stima-e^u} provides
some interesting information on solutions to \eqref{exp}.
In fact the bounds in \eqref{stima-e^u} correspond to
$\mu\int_{B_1}|x|^{\a}e^u$ when $u$ are the radial solutions to
\eqref{exp}. The first inequality is trivial, being $u$ the
minimal solution. The second one highlights that the other radial
solution ``maximizes" the quantity $\mu\int_{B_1}|x|^{\a}e^u$
(recall that there exist no solution to \eqref{exp} which stays
above any other solution).\\
Moreover, passing to the limit as $\mu\rightarrow0$ in \eqref{stima-e^u} we
get
\begin{equation}\label{r1}
\limsup\limits_{\mu\rightarrow0}\,\mu\int_{B_1}|x|^{\a}e^u\le8\pi\left(1+\frac\alpha2\right).
\end{equation}
 In \cite{DKM} the authors proved the existence of a
solution $u_{\mu,\a}$ concentrating
 as $\mu\rightarrow0$ at $j$ points with $1<j<1+\left[\frac\alpha2\right]$. Each concentration
  point carries a ``quantized mass'' $\mu\int_{B_1}|x|^{\a}e^u=8\pi$.
 Then, by \eqref{r1} we derive that $j=1+\left[\frac\alpha2\right]$ is
 the maximum number of peaks for which such a solution exists. So the result
 in \cite{DKM} in the unit ball is sharp when $\a$ is not an even integer.

\noindent Moreover, the pair $(\mu,\a)$ in \cite{DKM} such that there exist
solutions to \eqref{exp} identify a narrow strip close to $\mu=0$
in the plane $(\mu,\a)$. Our result shows that this region can be
extended till to the curve $\gamma_k$ (see figure \ref{fig3}).
We think that the nonradial solutions in Theorem \ref{texp-bif1} are the same found in \cite{DKM}.

The paper is organized as follows: in Section \ref{appA} we prove some preliminary results which are crucial in the proof of the main theorems. In our opinion they are interesting in themselves. In Section \ref{s3} we consider
the general problem \eqref{a1} and we prove Proposition \ref{a4}
and Theorems \ref{a11}-\ref{a15}. In Section \ref{s4} we turn to
the exponential case and we prove Propositions \ref{p-ni1}, \ref{stima-pohozaev} and Theorem \ref{texp-bif1}. 

Finally in Section \ref{s2}, using the transformation $r\mapsto
r^\frac{2+\a}2$, we retrieve some results, partly known and partly
new, for the problem
\begin{equation*}
\begin{cases}
-\Delta u = \abs{x}^{\alpha}e^u, & \hbox{ in }\R^2 \\
\int_{\R^2} \abs{x}^{\alpha}e^u < +\infty.
\end{cases}
\end{equation*}
while in the Appendix we prove some
some technical results.

\begin{figure}[t!]
\includegraphics[scale=0.5]{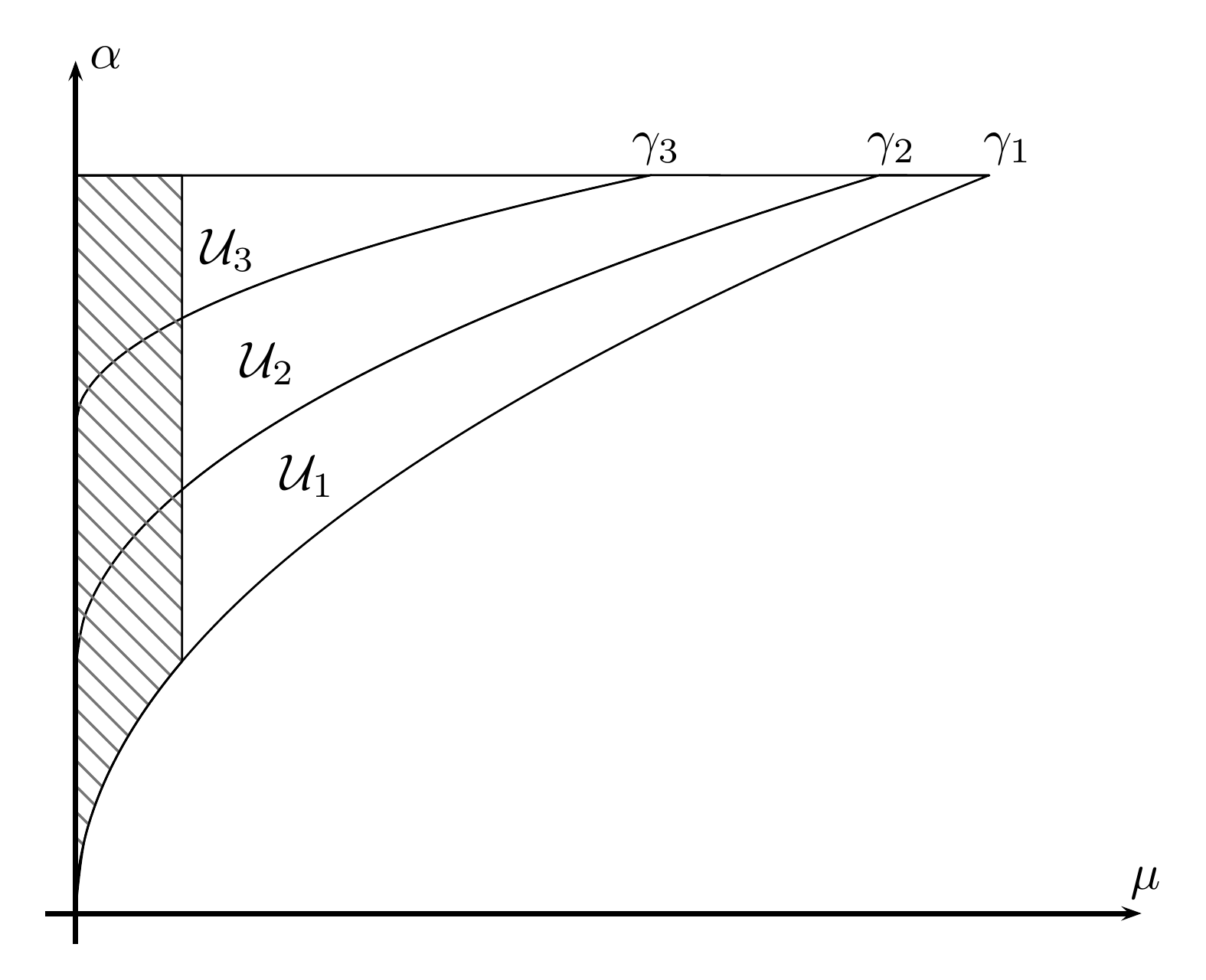}
\caption{}
\label{fig3}
\end{figure}
{\bf Acknowledgment}. The authors wish to thank F. De Marchis, Isabella Ianni and Filomena Pacella for their helpful comments on the definition of (1.6).

\section{Preliminaries}\label{appA}
In this section we prove some useful results which we will apply in the proof of Proposition \ref{a4} and Theorem \ref{a11}. These results link the eigenvalue problem associated to the linearization to \eqref{i2} at the radial solution $u_{\l,\a}$ with a sort of 'eigenvalue' problem with singular weight.\par
The main difficulty is the fact that this linear operator with singular weight is neither compact in $H^1_0(B_1(0))$, so we cannot apply the standard eigenvalue theory, and neither well defined.
To overcame the second problem we define a suitable subspace of $H^1_0(B_1(0))$ in which we can define the quadratic form associated to the linear weighted operator. To this end we let  $\mathcal{H}$ be the closure of $C^{\infty}_0(\Omega)$ with respect to the norm $\nor \eta \nor^2_{\mathcal{H}}=\int_\Omega\left(|\nabla \eta |^2+\frac{\eta^2}{|x|^2}\right)dx$.\par
For what concerns the compactness the problem is harder and indeed we cannot speak of eigenvalues. But the interesting fact is that if the infimum of the quadratic form associated to the linear problem with weight in $\mathcal{H}$ is {\em strictly negative} then there exists in $\mathcal{H}$ a weak solution to the weighted eigenvalue problem related to a negative eigenvalue, see Proposition \ref{B1}, and the same can be done for the subsequent eigenvalues, using the orthogonality with respect the $L^2$ scalar product with weight $\frac 1{|x|^2}$, see Proposition \ref{B111}.\\
Summarizing, each time the infimum of the quadratic form on a suitable orthogonal subspace of $\mathcal{H}$ is strictly negative then we can find a solution to the singular weighted problem. In some sense the negativeness of the eigenvalue restores a sort of compactness. The hypothesis on the negativeness is crucial since we prove in Proposition \ref{P1} that the result is not true if the infimum is zero.

\begin{proposition}\label{B1}
Let $\Omega\subset\R^2$ be a bounded domain with $0\in\Omega$, $a\in L^\infty(\Omega)$ and
\begin{equation}\label{0}
\Lambda_1=\inf_{\eta\in \mathcal{H}\atop \eta\not\equiv0
}\frac{\int_\Omega|\nabla\eta|^2-\int_\Omega a(x)\eta^2}{\int_\Omega \frac{\eta^2}{|x|^2}}<0.
\end{equation}
Then $\Lambda_1$ is achieved.
The function $\psi_1 \in \mathcal{H}$ that achieves $\Lambda_1$ is  strictly positive in $\Omega\setminus\{0\}$ and satisfies
\begin{equation}\label{AAAA}
\int_{\Omega}\nabla \psi_1\cdot \nabla \phi-a(x)\psi_1\phi\, dx=\Lambda_1\int_{\Omega}\frac{\psi_1\phi}{|x|^2}\, dx 
\end{equation}
for any $\phi\in \mathcal{H}$, 
and the eigenvalue $\Lambda_1$ is simple. 
\end{proposition}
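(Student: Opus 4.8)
The plan is to run the direct method of the calculus of variations, but with the crucial extra ingredient that $\Lambda_1<0$ gives compactness. First I would take a minimizing sequence $\eta_n\in\mathcal{H}$, normalized by $\int_\Omega |x|^{-2}\eta_n^2\,dx=1$, so that $\int_\Omega|\nabla\eta_n|^2-\int_\Omega a(x)\eta_n^2\to\Lambda_1$. Since $a\in L^\infty(\Omega)$ and the normalization controls $\int_\Omega|x|^{-2}\eta_n^2$, the quantity $\int_\Omega|\nabla\eta_n|^2$ is bounded, hence $\|\eta_n\|_{\mathcal{H}}$ is bounded and (up to a subsequence) $\eta_n\rightharpoonup\psi_1$ weakly in $\mathcal{H}$. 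The point where $\Lambda_1<0$ enters is the passage to the limit in the denominator: weak convergence in $\mathcal{H}$ only gives $\int_\Omega|x|^{-2}\psi_1^2\le\liminf\int_\Omega|x|^{-2}\eta_n^2=1$, and a priori one could have $\psi_1\equiv 0$ with all the mass escaping to the origin. I would rule this out exactly as follows: writing $Q(\eta)=\int_\Omega|\nabla\eta|^2-\int_\Omega a\eta^2$, weak lower semicontinuity gives $Q(\psi_1)\le\liminf Q(\eta_n)=\Lambda_1<0$; but if $\psi_1\equiv0$ then $Q(\psi_1)=0$, a contradiction. Hence $\psi_1\not\equiv0$, and then by definition of $\Lambda_1$ together with $\Lambda_1<0$ we get $Q(\psi_1)\ge\Lambda_1\int_\Omega|x|^{-2}\psi_1^2\ge\Lambda_1$ (the last step using $\int_\Omega|x|^{-2}\psi_1^2\le1$ and $\Lambda_1<0$), so in fact $Q(\psi_1)=\Lambda_1$ and $\int_\Omega|x|^{-2}\psi_1^2=1$. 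Thus $\psi_1$ is a minimizer.

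Next I would derive the Euler--Lagrange equation \eqref{AAAA}: for $\phi\in\mathcal{H}$ and $t$ small, $t\mapsto Q(\psi_1+t\phi)/\int_\Omega|x|^{-2}(\psi_1+t\phi)^2$ has a minimum at $t=0$, and differentiating gives $\int_\Omega\nabla\psi_1\cdot\nabla\phi-a\psi_1\phi=\Lambda_1\int_\Omega|x|^{-2}\psi_1\phi$ for all $\phi\in\mathcal{H}$ (all integrals are finite because $\psi_1,\phi\in\mathcal{H}$ and $a\in L^\infty$, via Cauchy--Schwarz in the weighted $L^2$). For positivity, I would note that $|\psi_1|\in\mathcal{H}$ with $Q(|\psi_1|)=Q(\psi_1)$ and the same denominator, so $|\psi_1|$ is also a minimizer; replacing $\psi_1$ by $|\psi_1|$ we may assume $\psi_1\ge0$. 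Then $\psi_1$ solves $-\Delta\psi_1=(a(x)+\Lambda_1|x|^{-2})\psi_1$ weakly away from the origin; since the coefficient is locally bounded on $\Omega\setminus\{0\}$, the strong maximum principle (Harnack) on $\Omega\setminus\{0\}$ forces $\psi_1>0$ there unless $\psi_1\equiv0$, which is excluded.

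Finally, simplicity of $\Lambda_1$: suppose $\psi$ is another minimizer, i.e. another solution of \eqref{AAAA} with the same $\Lambda_1$ achieving the infimum. By the previous paragraph both $\psi_1$ and $|\psi|$ (after normalization) are positive minimizers. The standard argument is that any two positive solutions of such a linear eigenvalue equation at the bottom of the spectrum must be proportional: one way is to use that for a minimizer the second variation is nonnegative, test the quadratic form with $\psi_1-c\psi$ for the constant $c$ making the combination vanish to leading order near some point, or — cleaner in this weighted setting — use the Picone-type identity / the fact that if $\psi_1,\psi>0$ both minimize a Rayleigh quotient then $\psi_1/\psi$ is constant (argue that $\eta:=\sqrt{(\psi_1^2+\psi^2)/2}$ also minimizes, and strict convexity forces $\psi_1=\psi$ up to scaling). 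I expect the main obstacle to be precisely this compactness issue in Step 1 — making rigorous that no mass is lost at the origin despite the absence of a Hardy inequality in $\R^2$ — and the resolution is the observation above that $Q(\psi_1)<0$ is incompatible with $\psi_1\equiv0$; everything else is a routine adaptation of classical arguments, carried out on $\Omega\setminus\{0\}$ to avoid the singularity of the weight.
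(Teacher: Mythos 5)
Your argument is correct, and the heart of it is the same as the paper's: the direct method, with the strict negativity of $\Lambda_1$ used to prevent the minimizing sequence from losing all its mass at the origin (the paper phrases this as ruling out $\int_\Omega \eta_n^2/|x|^2\to+\infty$, you phrase it as ruling out $\psi_1\equiv 0$; both hinge on $Q(\psi_1)\le\liminf Q(\eta_n)=\Lambda_1<0$ being incompatible with degeneration). Where you differ is in bookkeeping and in the simplicity proof. For existence, you normalize $\int_\Omega|x|^{-2}\eta_n^2=1$, which makes the limit argument cleaner than the paper's choice $\int_\Omega\eta_n^2=1$ (which forces them through Fatou and a $\liminf$/$\limsup$ juggle on the denominator); your version is a genuine streamlining, though you should say explicitly that $\int_\Omega a\,\eta_n^2\to\int_\Omega a\,\psi_1^2$ comes from the compact embedding $H^1_0(\Omega)\hookrightarrow L^2(\Omega)$, since $-\int a\,\eta^2$ alone is not weakly lower semicontinuous. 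The Euler--Lagrange and positivity steps coincide with the paper's (maximum principle away from the origin, where the coefficient $a+\Lambda_1|x|^{-2}$ is locally bounded and $\Omega\setminus\{0\}$ is connected). For simplicity, the paper orthogonalizes a second minimizer against $\psi_1$ in the weighted $L^2$ product, deduces it changes sign, and contradicts the positivity of $|\psi_2|$; you instead invoke the convexity/Picone route via $w=\sqrt{(\psi_1^2+\psi^2)/2}$, which is equally standard and gives proportionality of positive minimizers directly. Two small points to tighten there: you should check (or at least remark) that $w$ and $|\psi|$ belong to $\mathcal{H}$, i.e.\ to the closure of $C^\infty_0(\Omega)$ in the weighted norm, not merely that their norms are finite (the paper has the same implicit gap with $|\psi_1|$); and once you know $|\psi|=c\,\psi_1>0$ on $\Omega\setminus\{0\}$, add the one-line conclusion that $\psi$, being continuous by local elliptic regularity and nonvanishing on the connected set $\Omega\setminus\{0\}$, has constant sign, hence $\psi=\pm c\,\psi_1$.
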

\begin{proof}
Let us consider a minimizing sequence $\eta_n\in \mathcal{H}$ for $\Lambda_1$, i.e.,
\begin{equation}\label{1}
\frac{\int_\Omega|\nabla\eta_n|^2-\int_\Omega a(x)\eta_n^2}{\int_\Omega \frac{\eta_n^2}{|x|^2}}=\Lambda_1+o(1).
\end{equation}
Let us normalize $\eta_n$ such that 
\begin{equation}\label{2}
\int_\Om\eta_n^2=1.
\end{equation}
 Then, since $\Lambda_1<0$,  by \eqref{1} we get
\begin{equation}\label{3}
\int_\Omega|\nabla\eta_n|^2-\int_\Omega a(x)\eta_n^2\le0
\end{equation}
and then, since $a$ is bounded and \eqref{2} we deduce from \eqref{3} that
\begin{equation}\label{4}
\int_\Omega|\nabla\eta_n|^2\le C\int_\Omega\eta_n^2\le C.
\end{equation}
Hence $\eta_n\rightharpoonup\eta$ weakly in $H^1_0(\Om)$ and then it holds,
\begin{equation}\label{5a}
\int_\Omega|\nabla\eta|^2\le\liminf_{n\rightarrow+\infty}\int_\Omega|\nabla\eta_n|^2
\end{equation}
\begin{equation}\label{5b}
\int_\Omega a(x)\eta_n^2\rightarrow\int_\Omega a(x)\eta^2.
\end{equation}
So we get
\begin{equation}
\int_\Omega|\nabla\eta|^2-\int_\Omega a(x)\eta^2\le \liminf_{n\rightarrow+\infty}\int_\Omega|\nabla\eta_n|^2-\int_\Omega a(x)\eta_n^2+o(1)
\end{equation}
which implies, since $1=\int_\Omega\eta_n^2\le C\int_\Omega \frac{\eta_n^2}{|x|^2}$,
\begin{equation*}
\frac{\int_\Omega|\nabla\eta|^2-\int_\Omega a(x)\eta^2}{\limsup\limits_{n\rightarrow+\infty}\int_\Omega \frac{\eta_n^2}{|x|^2}}\le\Lambda_1,
\end{equation*}
and elementary properties of $\liminf$ and $\limsup$ imply
\begin{equation}\label{5}
\limsup\limits_{n\rightarrow+\infty}\frac{\int_\Omega|\nabla\eta|^2-\int_\Omega a(x)\eta^2}{\int_\Omega \frac{\eta_n^2}{|x|^2}}\le\Lambda_1,
\end{equation}
Moreover, by Fatou's lemma we have
\begin{equation}\label{6a}
\int_\Omega\frac{\eta^2}{|x|^2}\le\liminf_{n\rightarrow+\infty}\int_\Omega \frac{\eta_n^2}{|x|^2}\le C
\end{equation}
Indeed, if by contradiction we have that $\int_\Omega \frac{\eta_n^2}{|x|^2}\rightarrow+\infty$, by \eqref{4} and \eqref{5b} we derive
\begin{equation}
\lim\limits_{n\rightarrow+\infty}\frac{\int_\Omega|\nabla\eta_n|^2-\int_\Omega a(x)\eta_n^2}{\int_\Omega \frac{\eta_n^2}{|x|^2}}=0
\end{equation}
a contradiction with \eqref{1} because $\Lambda_1<0$.

Hence, again using that $\Lambda_1<0$, we get from \eqref{5} that
\begin{equation}\label{6}
\int_\Omega|\nabla\eta|^2-\int_\Omega a(x)\eta^2<0.
\end{equation}
On the other hand, from \eqref{6a} we get
\begin{equation}\label{7}
\limsup\limits_{n\rightarrow+\infty} \frac 1{\int_\Omega \frac{\eta_n^2}{|x|^2}}=\frac1{\liminf\limits_{n\rightarrow+\infty}\int_\Omega \frac{\eta_n^2}{|x|^2}}\le\frac1{\int_\Omega\frac{\eta^2}{|x|^2}}
\end{equation}
and so by \eqref{6a} and \eqref{6} we get,
\begin{equation}\label{8}
\frac{\int_\Omega|\nabla\eta|^2-\int_\Omega a(x)\eta^2}{\int_\Omega \frac{\eta^2}{|x|^2}}\le\liminf_{n\rightarrow+\infty}\frac{\int_\Omega|\nabla\eta|^2-\int_\Omega a(x)\eta^2}{\int_\Omega \frac{\eta_n^2}{|x|^2}}.
\end{equation}
Finally by \eqref{5} we get
\begin{equation}
\frac{\int_\Omega|\nabla\eta|^2-\int_\Omega a(x)\eta^2}{\int_\Omega \frac{\eta^2}{|x|^2}}\le\Lambda_1
\end{equation}
which proves that $\Lambda_1$ is achieved.\par
Let $\psi_1$ be the function that attains $\Lambda_1$. It is standard to prove that $\psi_1$ solves \eqref{AAAA}. We can assume that $\psi_1\geq 0$ in $\Omega\setminus\{0\}$ otherwise we can substitute $\psi_1$ with $|\psi_1|$. Then we claim that $\psi_1>0$ in  $\Omega\setminus\{0\}$. If this is not the case then we can find  points $x_0,x_1\in\Omega\setminus\{0\}$ such that $\psi_1(x_0)=0$ and $\psi_1(x_1)>0$. We choose a regular open subset $\Omega'\subset \Omega\setminus B_{\e}(0)$  (with $\e>0$ small enough) that contains $x_0$ and  $x_1$.
From \eqref{AAAA}, since $0$ does not belong to $\Omega'$, the function $\psi_1$ satisfies
$$\begin{cases}
-\Delta \psi_1-a(x)\psi_1-\frac {\Lambda_1}{|x|^2}\psi_1=0 & \hbox{ in }\Omega'\\
\psi_1\geq 0 & \hbox{ in } \overline{\Omega}'
\end{cases}
$$
Then the strong maximum principle implies that $\psi_1>0$ in $\Omega'$ or $\psi_1\equiv 0$. The first case contradicts the fact that $\psi_1(x_0)=0$ while the second contradicts $ \psi_1(x_1)>0$ and this contradiction proves the claim.\\
Now we prove that $\Lambda _1$ is simple. Assume, by contradiction that there exists another function $\psi_2$ that attains $\Lambda_1$. We can assume that $\psi_2$ is orthogonal to $\psi_1$, in the sense that satisfies
\begin{equation}\label{BBBB}
\int_{\Omega}\frac{\psi_1\psi_2}{|x|^2}\, dx=0.
\end{equation}
If this is not the case we let $\widetilde \psi_2=\psi_2-c\psi_1$ whit $c=\frac{\int_{\Omega}\frac{\psi_1\psi_2}{|x|^2}\, dx}{\int_{\Omega}\frac{\psi_1^2}{|x|^2}\, dx}$ so that \eqref{BBBB} is satisfied. From \eqref{BBBB} we derive that $\psi_2$ changes sign in $\Omega\setminus\{0\}$. From \eqref{0} we have that also $|\psi_2|$  attains $\Lambda_1$
and, since $|\psi_2|\geq 0$ in $\Omega\setminus\{0\}$
reasoning as before we get that $|\psi_2|> 0$ in $\Omega\setminus\{0\}$, contradicting the fact that $\psi_2$ changes sign. This proves that $\Lambda_1$ is simple and concludes the proof.
\end{proof}
Next proposition shows  that the condition \eqref{5} is sharp.
\begin{proposition}\label{P1}
Let us assume that in the previous proposition $a\equiv0$. Then
\begin{equation*}
\bar\Lambda_1=\inf_{\eta\in  \mathcal{H} \atop \eta\not\equiv0
}\frac{\int_{B_1}|\nabla\eta|^2}{\int_{B_1} \frac{\eta^2}{|x|^2}}=0
\end{equation*}
and it is not achieved.
\end{proposition}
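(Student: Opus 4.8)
\emph{Proof proposal.} The bound $\bar\Lambda_1\ge 0$ is immediate: for $\eta\in\mathcal H$ with $\eta\not\equiv0$ the numerator $\int_{B_1}|\nabla\eta|^2$ is nonnegative and the denominator $\int_{B_1}\eta^2/|x|^2$ is strictly positive (if it vanished, $\eta\equiv0$). So the substance of the statement is that the Rayleigh quotient has infimum exactly $0$ and that this value is not attained; this is precisely the failure of a Hardy inequality on a planar disk. My plan is to build an explicit minimizing sequence of radial functions and then exclude attainment by a soft argument based on the continuous embedding $\mathcal H\hookrightarrow H^1_0(B_1)$.

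For the minimizing sequence the natural move is to pass to polar coordinates and use the substitution $r=e^{-t}$, $t\in(0,\infty)$. For a radial function $\phi(r)$, writing $\psi(t)=\phi(e^{-t})$, a direct computation gives $\int_{B_1}|\nabla\phi|^2\,dx=2\pi\int_0^\infty(\psi')^2\,dt$ and $\int_{B_1}\phi^2/|x|^2\,dx=2\pi\int_0^\infty\psi^2\,dt$, so the radial Rayleigh quotient is transformed into the one-dimensional Dirichlet quotient $\int_0^\infty(\psi')^2\,dt\big/\int_0^\infty\psi^2\,dt$ on the half-line, whose infimum is $0$ by the usual scaling $\psi\mapsto\psi(\cdot/L)$. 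Concretely I would fix $\chi\in C_0^\infty(\R)$, $\chi\not\equiv0$, with $\operatorname{supp}\chi\subset(1,2)$, and set, for $L>1$,
\[
\eta_L(x)=\chi\!\left(\tfrac1L\log\tfrac1{|x|}\right),\qquad x\in B_1 .
\]
Then $\eta_L$ is supported in the annulus $\{e^{-2L}\le|x|\le e^{-L}\}\subset B_1\setminus\{0\}$, hence $\eta_L\in C_0^\infty(B_1)\subset\mathcal H$, and the computation above gives
\[
\frac{\int_{B_1}|\nabla\eta_L|^2}{\int_{B_1}\eta_L^2/|x|^2}
=\frac1{L^2}\,\frac{\int_0^\infty(\chi')^2\,dt}{\int_0^\infty\chi^2\,dt}\;\xrightarrow[L\to\infty]{}\;0 .
\]
Together with $\bar\Lambda_1\ge0$ this yields $\bar\Lambda_1=0$.

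Finally, for non-attainment, note that $\nor\cdot\nor_{\mathcal H}$ dominates the $H^1_0(B_1)$-norm (by Poincar\'e), so $\mathcal H$ embeds continuously in $H^1_0(B_1)$. If $\bar\Lambda_1=0$ were achieved by some $\psi\in\mathcal H$, $\psi\not\equiv0$, then---the denominator being positive and finite---we would have $\int_{B_1}|\nabla\psi|^2=0$, hence $\psi$ equals a.e. a constant $c$; but a nonzero constant does not lie in $\mathcal H$, since $c/|x|\notin L^2(B_1)$ (equivalently, $0$ is the only constant in $H^1_0(B_1)$), so $\psi\equiv0$, a contradiction. I do not anticipate any real obstacle: the only delicate points are (i) arranging the test functions to sit in an annulus away from the origin so that they are genuinely $C_0^\infty(B_1)$, which makes the membership $\eta_L\in\mathcal H$ automatic with no density argument, and (ii) in the last step, using that belonging to $\mathcal H$ already rules out nonzero constants, so that no regularity theory for a putative minimizer is required.
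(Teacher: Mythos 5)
Your proof is correct and takes essentially the same route as the paper: an explicit family of test functions concentrating near the origin (the paper uses the truncated profile $1-|x|$ at scale $\e$, you use log-rescaled annular bumps, a cosmetic difference) drives the Rayleigh quotient to zero, and non-attainment is the same soft observation that a minimizer would have zero Dirichlet energy, hence be a constant, which membership in $\mathcal{H}$ rules out. If anything, your write-up is a bit more explicit than the paper's, which settles non-attainment with ``Of course''.
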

\begin{proof}
Choosing the test functions
\begin{equation*}
\eta_\e(x)=
\begin{cases}
1-|x|&\hbox{if }\e\le|x|\le1\\
\frac{2(1-\e)}\e|x|+\e-1&\hbox{if }\frac\e2\le|x|\le\e\\
0&\hbox{if }|x|\le\frac\e2
\end{cases}
\end{equation*}
A straightforward computation shows that 
\begin{equation}\label{10}
\frac{\int_{B_1}|\nabla\eta_\e|^2}{\int_{B_1} \frac{\eta_\e^2}{|x|^2}}=\frac{\frac 7 4\pi+o(1)}{2\pi (-1+o(1))\log\e}\longrightarrow0\quad\hbox{as }\e\rightarrow0,
\end{equation}
which proves \eqref{9}. Of course $\bar\Lambda_1$ is not achieved.
\end{proof}
Now we consider the other 'eigenvalues' with weight. We are able to prove that, if they are negative then they are attained and an 'eigenfunction' that belongs to $\mathcal{H}$ exists. In sequel we say that $\eta$ and $\psi$ in $\mathcal{H}$ are orthogonal if they satisfy $\int_{\Omega}\frac{\eta\psi}{|x|^2}\, dx=0.$ 
\begin{proposition}\label{B111}
Let us assume $\Om$, $\Lambda_1$, $\psi_1$ and $a(x)$ as in the proposition \ref{B1}. 
Then if we have that
\begin{equation}\label{9}
\Lambda_2=\inf_{\scriptstyle
\eta\in\mathcal{H} ,\atop\scriptstyle
\eta\perp\psi_1}\frac{\int_\Omega|\nabla\eta|^2-\int_\Omega a(x)\eta^2}{\int_\Omega \frac{\eta^2}{|x|^2}}<0
\end{equation}
then $\Lambda_2$ is achieved. Moreover the functions $\psi_2\in \mathcal{H}$ that attains $\Lambda_2$ satisfies
$$
\int_{\Omega}\nabla \psi_2\cdot \nabla 
\phi-a(x)\psi_2\phi\, dx=
\Lambda_2\int_{\Omega}\frac{\psi_2\phi}{|x|^2}\, dx $$
for any $\phi\in \mathcal{H}$.\par
Similarly for $i=2,..,k$, if we have that
\begin{equation}
\Lambda_i=\inf_{\scriptstyle
\eta\in\mathcal{H} ,\atop\scriptstyle
\eta\perp span\left\{\psi_1,\psi_2,..,\psi_{i-1}\right\}}\frac{\int_\Omega|\nabla\eta|^2-\int_\Omega a(x)\eta^2}{\int_\Omega \frac{\eta^2}{|x|^2}}<0
\end{equation}
then $\Lambda_i$ is achieved and the functions $\psi_i\in \mathcal{H}$ that attain $\Lambda_i$ satisfy
$$
\int_{\Omega}\nabla \psi_i\cdot \nabla 
\phi-a(x)\psi_i\phi\, dx=
\Lambda_i\int_{\Omega}\frac{\psi_i\phi}{|x|^2}\, dx $$
for any $\phi\in \mathcal{H}$.
\end{proposition}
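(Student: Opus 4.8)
The plan is to rerun, with only one substantive addition, the variational argument of Proposition~\ref{B1}; the extra ingredient is that the orthogonality constraints are preserved under weak limits. I would argue by induction on $i$, the case $i=2$ being the model. First I would take a minimizing sequence $\eta_n\in\mathcal{H}$ for $\Lambda_i$ with $\eta_n\perp\mathrm{span}\{\psi_1,\dots,\psi_{i-1}\}$ and $\int_\Omega\eta_n^2=1$. Since $\Lambda_i<0$, exactly as in \eqref{3}--\eqref{4} one gets $\int_\Omega|\nabla\eta_n|^2\le C$; moreover, if $\int_\Omega\eta_n^2/|x|^2\to+\infty$ along a subsequence the quotient defining $\Lambda_i$ would tend to $0$, contradicting $\Lambda_i<0$, so $\int_\Omega\eta_n^2/|x|^2\le C$ as well. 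Thus $\{\eta_n\}$ is bounded in $\mathcal{H}$, so up to a subsequence $\eta_n\rightharpoonup\eta$ weakly in $\mathcal{H}$ (hence $\eta\in\mathcal{H}$) and in $H^1_0(\Omega)$; by Rellich $\eta_n\to\eta$ in $L^2(\Omega)$, so $\int_\Omega\eta^2=1$ and $\eta\not\equiv0$, while Fatou gives $\int_\Omega\eta^2/|x|^2\le\liminf_n\int_\Omega\eta_n^2/|x|^2\le C$.

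The genuinely delicate point --- the only place the argument departs from Proposition~\ref{B1}, and the step I expect to be the main obstacle --- is showing that $\eta$ still satisfies the constraints. Here I would fix $j\le i-1$ and split $\int_\Omega\eta_n\psi_j/|x|^2=\int_{B_\e}+\int_{\Omega\setminus B_\e}$. On $\Omega\setminus B_\e$ the weight $1/|x|^2$ is bounded, so the strong $L^2$ convergence of $\eta_n$ gives $\int_{\Omega\setminus B_\e}\eta_n\psi_j/|x|^2\to\int_{\Omega\setminus B_\e}\eta\psi_j/|x|^2$. On $B_\e$, Cauchy--Schwarz and the uniform bound $\int_\Omega\eta_n^2/|x|^2\le C$ give $\bigl|\int_{B_\e}\eta_n\psi_j/|x|^2\bigr|\le C^{1/2}\bigl(\int_{B_\e}\psi_j^2/|x|^2\bigr)^{1/2}$, which tends to $0$ as $\e\to0$ uniformly in $n$ since $\psi_j\in\mathcal{H}$ (the same estimate holds for $\eta$). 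Letting $n\to\infty$ and then $\e\to0$ would give $\int_\Omega\eta\psi_j/|x|^2=0$, so $\eta$ lies in the constraint set. With $\eta$ admissible, lower semicontinuity of $\int_\Omega|\nabla\cdot|^2$, the compactness $\int_\Omega a\eta_n^2\to\int_\Omega a\eta^2$, and the $\limsup$/$\liminf$ manipulations of \eqref{5}--\eqref{8} --- which only use $\Lambda_i<0$ --- would show that the quotient at $\eta$ is $\le\Lambda_i$, so $\Lambda_i$ is attained, and I would set $\psi_i:=\eta$.

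Finally, to identify the equation, I would apply the Lagrange multiplier rule to the $0$-homogeneous quotient minimized over $\mathcal{H}$ subject to the finitely many linear constraints $\int_\Omega\psi_i\psi_j/|x|^2=0$ ($j<i$): this yields constants $\mu_1,\dots,\mu_{i-1}$ with
\[
\int_\Omega\nabla\psi_i\cdot\nabla\phi-a(x)\psi_i\phi\,dx=\Lambda_i\int_\Omega\frac{\psi_i\phi}{|x|^2}\,dx+\sum_{j=1}^{i-1}\mu_j\int_\Omega\frac{\psi_j\phi}{|x|^2}\,dx\qquad\text{for all }\phi\in\mathcal{H}.
\]
Testing with $\phi=\psi_l$ ($l<i$) and using that $\psi_l$ solves its own weak equation with eigenvalue $\Lambda_l$ (Proposition~\ref{B1} if $l=1$, the inductive hypothesis otherwise) together with the orthogonality of $\{\psi_1,\dots,\psi_{i-1}\}$, the left side equals $\Lambda_l\int_\Omega\psi_l\psi_i/|x|^2\,dx=0$ while the right side equals $\mu_l\int_\Omega\psi_l^2/|x|^2\,dx$; since $\psi_l\not\equiv0$ this forces $\mu_l=0$, leaving the asserted weak equation and closing the induction. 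All the analytic difficulty --- the failure of compactness of the singular weight, and of the Hardy inequality, in $\R^2$ --- is confined to the preservation of orthogonality in the second paragraph; everything else is Proposition~\ref{B1} run again.
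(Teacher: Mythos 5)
Your proposal is correct and follows essentially the same route as the paper, whose proof of this proposition is simply the remark that one repeats the argument of Proposition \ref{B1} for a minimizing sequence subject to the orthogonality constraints. You supply the details the paper leaves implicit — the constraints pass to the weak limit because $\phi\mapsto\int_\Omega\phi\,\psi_j/|x|^2$ is controlled via Cauchy--Schwarz and the uniform bound on $\int_\Omega\eta_n^2/|x|^2$, and the Lagrange multipliers vanish upon testing with the $\psi_l$ and using their own weak equations — and these details are sound.
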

\begin{proof}
It is the same of Proposition \ref{B1}. For any $i$ let us consider a minimizing sequence $\eta_{i,n}\in \mathcal{H} $ for $\Lambda_i$. Then it converges to a function $\eta_i$ which achieves $\Lambda_i$.
\end{proof}
\begin{lemma}\label{B2}
Let us consider a solution to
\begin{equation}\label{f1a}
\begin{cases}
-\psi'' - \frac{1}{r}\psi' +\beta^2\frac{\psi}{r^2}=h\psi, \quad \text{in} \ \ (0,1)\\
\psi(1)=0,\ \int_0^1\left((\psi')^2+\frac{\psi^2}{r^2}\right)rdr< \infty
\end{cases}
\end{equation}
with $h\in L^\infty(0,1)$ and $\beta\ne0$. Then $\psi\in L^\infty(0,1)$ and $\psi(0)=0$.
\end{lemma}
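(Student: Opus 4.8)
The plan is to represent $\psi$ by an explicit variation–of–parameters formula and then read off both conclusions from the integrability of $\psi^2/r$ near the origin. Since the equation in \eqref{f1a} depends on $\beta$ only through $\beta^2$, I would assume $\beta>0$ throughout.

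First I would rewrite the equation in divergence form, $-(r\psi')'+\frac{\beta^2}{r}\psi=rh\psi$. The homogeneous Euler equation has the two solutions $u_1(r)=r^{\beta}$ (regular at $0$) and $u_2(r)=r^{-\beta}-r^{\beta}$ (vanishing at $r=1$), with Wronskian $u_1u_2'-u_1'u_2=-2\beta/r$. Next I would observe that $\psi\in L^2(0,1)$, since $\int_0^{1/2}\psi^2\,dr\le\frac12\int_0^{1/2}\psi^2/r\,dr<\infty$, so $g:=r\,h\,\psi\in L^2(0,1)\subset L^1(0,1)$ because $h\in L^\infty$. Then I would set
\[
\Psi(r)=\frac{1}{2\beta}\left(u_2(r)\int_0^r u_1(s)g(s)\,ds+u_1(r)\int_r^1 u_2(s)g(s)\,ds\right),
\]
and check, by a direct differentiation in which the $g$–terms cancel thanks to the Wronskian identity, that $\Psi\in W^{2,2}_{\mathrm{loc}}(0,1)$ is a strong solution of the equation in \eqref{f1a} with $\Psi(1)=0$.

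The crux is then to estimate $\Psi$ near $r=0$ using only $\int_0^1\psi^2/r\,dr<\infty$. By Cauchy–Schwarz, $\big|\int_0^r u_1 g\big|\le\|h\|_\infty\big(\int_0^r s^{3+2\beta}ds\big)^{1/2}\big(\int_0^r \psi^2/s\,ds\big)^{1/2}=o(r^{2+\beta})$, and likewise $\big|\int_r^1 u_2 g\big|\le C\big(1+r^{2-\beta}+\sqrt{|\log r|}\big)$; multiplying by $u_2(r)=O(r^{-\beta})$, respectively $u_1(r)=r^{\beta}$, shows that each term of $\Psi(r)$ tends to $0$ as $r\to0^+$, so $\Psi\in L^\infty(0,1)$ with $\Psi(r)\to0$ at the origin. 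The same estimates applied to $\Psi'=\frac{1}{2\beta}\big(u_2'\int_0^r u_1 g+u_1'\int_r^1 u_2 g\big)$ give $(\Psi')^2 r=O(r^{2\beta-1}|\log r|)$ near $0$, which is integrable for $\beta>0$, so $\Psi$ also makes the integral in \eqref{f1a} finite.

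Finally I would note that $\psi-\Psi$ solves the homogeneous equation on $(0,1)$ in the distributional — hence, since the coefficients are smooth there, classical — sense, so $\psi-\Psi=c_1r^{\beta}+c_2r^{-\beta}$. Both $\psi$ and $\Psi$ make the integral in \eqref{f1a} finite, hence so does $c_1r^{\beta}+c_2r^{-\beta}$; since $\int_0^1 r^{-2\beta-1}dr=+\infty$ this forces $c_2=0$, so $\psi=\Psi+c_1r^{\beta}$, which is bounded and satisfies $\psi(0)=\Psi(0)=0$ because $\beta>0$. The main obstacle is exactly the behavior of $\Psi$ at the origin: one must verify that the potentially singular $u_2$–part of the representation is still controlled using nothing beyond the weighted bound $\int_0^1\psi^2/r\,dr<\infty$ from the hypothesis, and that this same bound excludes an $r^{-\beta}$ component in $\psi$; everything else is routine one–dimensional ODE theory on $(0,1)$.
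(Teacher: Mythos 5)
Your argument is correct, and it reaches both conclusions ($\psi\in L^\infty$, $\psi(0)=0$) from exactly the right ingredient, namely Cauchy--Schwarz against the weighted bound $\int_0^1\psi^2/r\,dr<\infty$; but it is organized differently from the paper. The paper does not build a variation-of-parameters solution: it tests the equation against the single homogeneous solution $r^\beta$ on intervals $(r_n,R)$, where $r_n\to0$ is a sequence along which $\psi(r_n)=o(1)$ and $r_n^{\beta+1}\psi'(r_n)=o(1)$ (both extracted from the weighted integrability), so that the boundary terms at the origin disappear and one obtains directly the identity $\psi(t)/t^{\beta}=\int_t^1 R^{-2\beta-1}\bigl(\int_0^R s^{\beta+1}h\psi\,ds\bigr)dR$, after which the same Cauchy--Schwarz estimate $\bigl|\int_0^R s^{\beta+1}h\psi\,ds\bigr|\le CR^{\beta+2}$ gives $\psi(t)\to0$. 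Your route instead constructs the particular solution $\Psi$ with the Green kernel built from $r^{\beta}$ and $r^{-\beta}-r^{\beta}$, shows $\Psi\to0$ at the origin, and then excludes the singular homogeneous mode $r^{-\beta}$ because $\int_0^1 r^{-2\beta-1}dr=\infty$; this buys you a cleaner, more modular argument that avoids the sequence $r_n$ and the boundary-term bookkeeping, at the price of having to justify that $\Psi$ is a genuine solution with finite weighted norm and that $\psi-\Psi$ lies in the two-dimensional span of $r^{\pm\beta}$ (a regularity remark the paper never needs). Two small points: your bound $(\Psi')^2r=O(r^{2\beta-1}|\log r|)$ is not accurate for $\beta>2$ (the term $u_2'\int_0^r u_1g$ contributes $O(r^3)$, which then dominates), though integrability near $0$ persists; and in fact only the bound on $\Psi$ itself, not on $\Psi'$, is needed to rule out $c_2\neq0$, so that estimate can be dropped altogether.
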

\begin{proof}
Since $\int_0^1\frac{\psi^2}rdr<+\infty$ then there exists a sequence $r_n\rightarrow0$ such that
$\psi(r_n)=o(1)$ as $n\rightarrow+\infty$. Such a sequence exists because, if not, we get $\psi(r)\ge C$ in a suitable neighborhood of $0$ and this contradicts that $\int_0^1\frac{\psi^2}rdr<+\infty$.\\
Let us observe that the function $v(r)=r^{\beta}$, with $\beta > 0$ satisfies
 \begin{equation}\label{z2}
\begin{cases}
-v''-\frac1r v'+\frac{\beta^2}{r^2}v=0, \quad\hbox{in }(0,+\infty)\\
v(0)=0.
\end{cases}
\end{equation}
 From \eqref{z2} and \eqref{f1a} we obtain, integrating on $(r_n,R)$,
 \begin{equation}\label{f2}
 \int_{r_n}^Rs^{\beta+1}h(s)\psi(s)ds=-R^{\beta+1}\psi'(R)+r_n^{\beta+1}\psi'(r_n)+\beta R^{\beta}\psi(R)-\beta r_n^{\beta}\psi(r_n)
\end{equation}
We claim that
\begin{equation}\label{f3}
r_n^{\beta+1}\psi'(r_n)=o(1)
\end{equation}
Integrating \eqref{f1a} we get
\begin{equation*}
r_n^{\beta+1}\psi'(r_n)=O\left(r_n^{\beta}\right)+r_n^{\beta}
\int_{r_n}^1 sh(s)\psi(s)ds
-r_n^{\beta}\int_{r_n}^1\frac{\beta^2}s\psi(s)ds=o(1)
\end{equation*}
since $r_n^{\beta}\int_{r_n}^1\frac{\psi(s)}s\le r_n^{\beta} \left(\int_{r_n}^1\frac{\psi^2(s)}s\right)^\frac12
\left(\int_{r_n}^1\frac1s\right)^\frac12\le C r_n^{\beta} \left(-\log r_n\right)^\frac12=o(1)$
which proves  \eqref{f3}. Hence  \eqref{f2} becomes
 \begin{equation}\label{f4}
 \int_0^R s^{\beta +1}h(s)\psi(s)ds=-R^{\beta+1}\psi'(R)+\beta R^{\beta}\psi(R)
\end{equation}
Then we deduce that
 \begin{equation}\label{f5}
\frac{\psi(t)}{t^{\beta}}=\int_t^1\frac1{R^{2\beta+1}}
\left(\int_0^Rs^{\beta+1}h(s)\psi(s)ds\right)dR.
\end{equation}
Now, since $h\in L^\infty(0,1)$ we get
\begin{align}\label{B.12}
\left|\int_0^Rs^{\beta+1}h(s)\psi(s)\right|\le C\int_0^Rs^{\beta+\frac32}\frac{\psi(s)}{s^\frac12} & \le C \left(\int_0^Rs^{2\beta+3}\right)^\frac12
\left(\int_0^R\frac{\psi^2(s)}s\right)^\frac12\nonumber \\
&\leq CR^{\beta+2}.
\end{align}
Finally \eqref{f5} becomes
\begin{equation}\label{f6}
\psi(t)=
O(t^\beta)
\end{equation}
and this shows that $\psi(0)=0$. This ends the proof.
\end{proof}

\begin{lemma}\label{B3}
Let $v_\l$ be a solution of \eqref{i2} with Morse index $1$. Then there exists only one negative number $\nu_1(\l)$ such that the problem 
\begin{equation}\label{fin-fin}
\begin{cases}
-\psi'' - \frac{1}{r}\psi'-f'(\l,v_\l)\psi= \frac{ \nu_1(\l) }{r^2}\psi , \quad \text{in} \ \ (0,1)\\
\psi(1)=0,\ \int_0^1 r(\psi')^2+\frac{\psi^2}r\, dr< \infty
\end{cases}
\end{equation}
 has solution. Moreover, $\nu_1$ is simple and the function that satisfies \eqref{fin-fin} is strictly positive.
\end{lemma}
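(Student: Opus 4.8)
The plan is to recognize \eqref{fin-fin} as the radial form of the singular weighted eigenvalue problem of Proposition \ref{B1} on $\Omega=B_1$ with weight $|x|^{-2}$ and potential $a(x)=f'(\l,v_\l(|x|))$, so that existence, simplicity and positivity of the solution come out of Propositions \ref{B1}--\ref{B111} and Lemma \ref{B2}, and then to invoke the Morse index hypothesis on $v_\l$ to rule out a second negative value.

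First I would note that, by elliptic regularity, $v_\l$ is a classical solution and $a:=f'(\l,v_\l)\in L^\infty(B_1)$, so Propositions \ref{B1} and \ref{B111} apply with this choice of $\Omega$ and $a$. The key preliminary step is to show that
\[
\Lambda_1:=\inf_{\eta\in\mathcal{H},\ \eta\not\equiv0}\frac{\int_{B_1}|\nabla\eta|^2-a\eta^2}{\int_{B_1}\eta^2/|x|^2}<0 .
\]
Since $v_\l$ has Morse index $1$, the bottom eigenvalue $\mu_1$ of $-\Delta-a$ on $H^1_0(B_1)$ is negative, with positive eigenfunction $\varphi_1\in L^\infty(B_1)$, so $\int_{B_1}|\nabla\varphi_1|^2-a\varphi_1^2=\mu_1\int_{B_1}\varphi_1^2<0$. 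As $\varphi_1(0)>0$ we have $\int_{B_1}\varphi_1^2/|x|^2=+\infty$, hence $\varphi_1\notin\mathcal{H}$; I would therefore truncate near the origin with the logarithmic cut-off $\xi_\e$ (equal to $0$ on $B_{\e^2}$, to $1$ off $B_\e$, with $\int_{B_1}|\nabla\xi_\e|^2\to0$, using that a point has zero capacity in $\R^2$). Then $\varphi_1\xi_\e\in\mathcal{H}$ because it vanishes near $0$, and $\varphi_1\xi_\e\to\varphi_1$ in $H^1_0(B_1)$ (the term $\|\varphi_1\nabla\xi_\e\|_2$ dies because $\varphi_1\in L^\infty$); so the numerator of its Rayleigh quotient tends to $\mu_1\int_{B_1}\varphi_1^2<0$ while the denominator stays finite and positive, and therefore $\Lambda_1<0$.

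Now Proposition \ref{B1} gives that $\Lambda_1$ is attained by a simple, positive eigenfunction $\psi_1\in\mathcal{H}$ solving \eqref{AAAA}. Precomposing $\psi_1$ with rotations and using simplicity and positivity, $\psi_1$ is radial; hence $\Lambda_1=\nu_1(\l)$, the infimum in \eqref{a5}, and the radial profile $\tilde\psi_{1,\l}$ of $\psi_1$ satisfies, by testing \eqref{AAAA} against radial functions in $C_0^\infty(B_1\setminus\{0\})$ and using interior regularity, the ODE in \eqref{fin-fin} classically on $(0,1)$, with $\tilde\psi_{1,\l}(1)=0$ and finite $\mathcal H$-norm; that is, it solves \eqref{fin-fin} with $\nu=\nu_1(\l)<0$. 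Rewriting this ODE as \eqref{f1a} with $\beta=\sqrt{-\nu_1(\l)}\neq0$ and $h=a$, Lemma \ref{B2} yields $\tilde\psi_{1,\l}\in L^\infty(0,1)$ and $\tilde\psi_{1,\l}(0)=0$; together with $\psi_1>0$ on $B_1\setminus\{0\}$ this is the required strictly positive solution. Simplicity of $\nu_1(\l)$ follows because any solution of \eqref{fin-fin} with $\nu=\nu_1(\l)$, tested against itself, realizes the infimum $\Lambda_1$ and is hence a multiple of $\psi_1$ by Proposition \ref{B1}.

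The delicate point — and the only real obstacle — is uniqueness of the negative value. Suppose $\nu<0$, $\nu\neq\nu_1(\l)$, and $0\not\equiv\psi\in\mathcal{H}$ solves \eqref{fin-fin} with this $\nu$. By Lemma \ref{B2} (with $\beta=\sqrt{-\nu}\neq0$), $\psi$ is bounded with $\psi(0)=0$ and $\psi(r)=O(r^\beta)$, which makes the origin a removable singularity and, since then $|x|^{-2}\psi\in L^1(B_1)$, lets one promote the ODE to the weak form $\int_{B_1}\nabla\psi\cdot\nabla\phi-a\psi\phi=\nu\int_{B_1}\psi\phi/|x|^2$ for all $\phi\in\mathcal{H}$. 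Testing this against $\psi_1$ and \eqref{AAAA} against $\psi$ and subtracting gives $(\nu-\nu_1(\l))\int_{B_1}\psi\psi_1/|x|^2=0$, so $\int_{B_1}\psi\psi_1/|x|^2=0$; in particular $\psi$ is not a multiple of $\psi_1$. If $\nu<\nu_1(\l)=\Lambda_1$, testing $\psi$'s equation against $\psi$ shows its Rayleigh quotient equals $\nu<\Lambda_1$, contradicting the definition of $\Lambda_1$. If instead $\nu_1(\l)<\nu<0$, then $\psi$ is admissible for the second infimum $\Lambda_2$ of Proposition \ref{B111} and realizes the value $\nu$, so $\Lambda_2\le\nu<0$; Proposition \ref{B111} then provides $\psi_2\in\mathcal{H}$ attaining $\Lambda_2$, orthogonal to $\psi_1$, with $\int_{B_1}(|\nabla\psi_2|^2-a\psi_2^2)=\Lambda_2\int_{B_1}\psi_2^2/|x|^2<0$ and cross term $\int_{B_1}\nabla\psi_1\cdot\nabla\psi_2-a\psi_1\psi_2=\Lambda_1\int_{B_1}\psi_1\psi_2/|x|^2=0$; hence $\int_{B_1}(|\nabla u|^2-au^2)<0$ for every $u\neq0$ in the two-dimensional subspace $\mathrm{span}\{\psi_1,\psi_2\}\subset H^1_0(B_1)$, so $v_\l$ has Morse index at least $2$ — a contradiction. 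Therefore $\nu_1(\l)$ is the unique negative value for which \eqref{fin-fin} has a nontrivial solution, which completes the proof.
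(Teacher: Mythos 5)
Your proposal is correct, and it reaches the conclusion by a route that differs from the paper's at the key preliminary step. To show that the weighted infimum is negative, the paper stays radial and uses a domain-perturbation argument: it passes to the annulus $A_\e=B_1\setminus B_\e(0)$, invokes continuity and monotonicity of Dirichlet eigenvalues with respect to the domain (\cite{BNV}) plus the equivalence of the weighted and unweighted eigenvalue problems on $A_\e$ (as in \cite{GGPS}), and extends the resulting positive radial eigenfunction by zero to produce a test function for \eqref{a5}; you instead truncate the first eigenfunction of $-\Delta-f'(\l,v_\l)$ on $B_1$ with a logarithmic cut-off, exploiting that points have zero capacity in $\R^2$, which is arguably more elementary and self-contained. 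A second difference is that you work in the two-dimensional space $\mathcal{H}$, apply Proposition \ref{B1} there, and recover radial symmetry of the minimizer from simplicity, positivity and rotation invariance before identifying $\Lambda_1$ with $\nu_1(\l)$, whereas the paper simply reruns the proof of Proposition \ref{B1} within the radial class; both are legitimate, yours buys the symmetry statement as a by-product, the paper's avoids the symmetry discussion altogether. For uniqueness of the negative value your argument is essentially the paper's (weighted orthogonality of the two eigenfunctions against a two-dimensional subspace on which the quadratic form is negative, contradicting Morse index one), except that your detour through $\Lambda_2$ and Proposition \ref{B111} is not needed: once you know $\int_{B_1}\psi\psi_1/|x|^2=0$ and $Q(\psi)=\nu\int_{B_1}\psi^2/|x|^2<0$, $Q(\psi_1)<0$ with vanishing cross term, the span of $\psi_1$ and $\psi$ already gives the contradiction, which also makes the case distinction $\nu<\Lambda_1$ versus $\nu>\Lambda_1$ superfluous. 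Finally, when you ``promote the ODE to the weak form'' and test against $\psi_1$ (and against $\psi$ itself for the simplicity claim), you should make explicit the control of the boundary term at the origin, e.g. via the asymptotics $\psi(r)=O(r^\beta)$, $r\psi'(r)=O(r^\beta)$ from the proof of Lemma \ref{B2}, or via the sequence $r_n\to0$ with $r_n\psi(r_n)\psi_1'(r_n)=o(1)$ that the paper uses; this is a compressed but repairable point, not a gap.
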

\begin{proof}
By assumption $v_\l$ is a Morse index one solution to \eqref{i2}. This implies that the eigenvalue problem
\begin{equation}\label{eig}
\begin{cases}
-\Delta w-f'(\l,v_\l)w=\l w &\hbox{ in }B_1(0)\\
w=0&\hbox{ on }\partial B_1(0)
\end{cases}
\end{equation}
has only one negative eigenvalue $\l_1$ in $H^1_0(B_1(0))$. It is well known that the first eigenfunction $w_1$ is radial. 
Since the first eigenvalue $\l_1$ depends with continuity from the domain $\Omega$, see \cite{BNV}, we have that for $\e>0$ sufficiently small, in the domain $A_\e=B_1(0)\setminus B_{\e}(0)$, the eigenvalue problem  \eqref{eig} in $H^1_0(A_\e)$ has at least $1$ negative eigenvalues $\l_1(A_\e)$. Moreover the strict monotonicity of the first eigenvalues with respect to the domain, see \cite{BNV}, implies  that $\l_{2}(A_\e)> \l_{2}(B_1(0))\geq 0$.\\
In $A_\e$ we can consider the weighted eigenvalue problem
\begin{equation}\label{eig2}
\begin{cases}
-\Delta w-f'(\l,v_\l)w=\Lambda \frac {w}{|x|^2} &\hbox{ in }A_\e\\
w=0&\hbox{ on }\partial A_\e
\end{cases}
\end{equation}
Reasoning exactly as in Lemma 2.1 of \cite{GGPS}, we have that the two eigenvalue problems are equivalent in $A_\e$. Then problem  \eqref{eig2} has a first negative eigenvalue $\Lambda_1(A_\e)$ which is attained on a radial function $\psi_\e>0$. Extending $\psi_\e$ to zero in $B_\e(0)$ we have that $\psi_\e$ is a radial function that belongs to $\mathcal{H}$. Then $\psi_\e$ belongs to the space $H$ defined in Proposition \ref{a4}. Using $\psi_\e$ in the definition of $\nu_1$ in \eqref{a5} then we get that $\nu_1<0$.\\
Then we can repeat exactly the proof of Proposition \ref{B1} (in this radial case) getting that there exists a radial strictly positive function $\tilde{\psi}_1$ that attains $\nu_1$ and hence satisfies \eqref{fin-fin}.\\
Let us suppose that there exists $\nu_2<0$, $\nu_2\ne\nu_1$ and $\psi_2$ satisfying
\begin{equation} \label{+++}
\begin{cases}
-\psi_2'' - \frac{1}{r}\psi_2' - f'(\l,v_\l)\psi_2 = \frac{\nu_2}{r^2}\psi_2, \quad \text{in} \ \ (0,1)\\
\psi_2(1)=0,\ \int_0^1\left(r(\psi_2')^2+ \frac{\psi^2}r\right)dr< \infty,
\end{cases}
\end{equation}
We claim that there exists a sequence $r_n\to 0$ such that $r_n\psi_2(r_n)\psi_1'(r_n)=o(1)$. If not we get $\psi_2(r)\psi_1'(r)\geq \frac Cr$ in a suitable 
neighborhood of $0$ and this contradicts that $\int_0^1 \psi'_1(r)\psi_2(r)\, dr\leq \left(\int_0^1 r(\psi_1')^2\,dr \right)^{\frac 12}\left( \int_0^1 \frac {\psi_2^2}r\,dr\right)^{\frac 12}\leq C$. Then we can multiply equation \eqref{fin-fin} by $\psi_2$, integrate over $(r_n,1)$ and pass to the limit as $n\to +\infty$ getting
$$\int_0^1 r\psi_1'\psi_2'\, dr -\int_0^1 r f'(\l,v_\l)\psi_1\psi_2\, dr =\nu_1(\l)\int_0^1  \frac {\psi_1\psi_2}r\, dr.$$
In the same way, using equation \eqref{+++} we get
$$\int_0^1 r\psi_2'\psi_1'\, dr -\int_0^1 r f'(\l,v_\l)\psi_2\psi_1\, dr =\nu_2\int_0^1  \frac {\psi_2\psi_1}r\, dr.$$
Then subtracting the equations we 
 have that
\begin{equation}\label{q1}
\int_0^1\frac{\psi_1\psi_2}r=0
\end{equation}
and
\begin{equation}\label{q2}
\int_{B_1}\left[|\nabla \psi_i|^2 - f'(\lambda,v_\lambda)\psi_i^2\right]<0\quad i=1,2.
\end{equation}
By \eqref{q2} we get that both $\psi_1$ and $\psi_2$ make negative the quadratic form associated to $v_\l$. Hence, since the Morse index of $v_\l$ is one, we get that $\psi_1=\alpha\psi_2$ for some $\alpha\in\R$. But this contradicts \eqref{q1}.
\end{proof}

\begin{lemma}\label{B100}
Let $u_{\l,\a}$ be a solution to \eqref{a1} whose Morse index is $M>0$. Then there exist exactly $M$ functions $\psi_i$ and $M$ numbers $\Lambda_i<0$ such that the problem
\begin{equation}\label{fin-fine}
\begin{cases}
-\Delta \psi-\left(\frac{2+\a}2\right)^2|x|^{\a}f'(\l,u_{\l,\a})\psi= \frac{ \Lambda }{|x|^2}\psi , \ \text{in} \ \ B_1(0)\setminus\{0\}\\
\psi=0 \ \text{on} \ \partial B_1(0)\\
\int_{B_1(0)}\left(|\nabla \psi|^2+  \frac{\psi^2}{|x|^2}\right)dx< \infty
\end{cases}
\end{equation}
admits a solution. The functions $\psi_i$ can be taken in such a way they verify
\begin{equation}\label{perpendicolari}
\int_{B_1(0)}\frac{ \psi_i\psi_j}{|x|^2}\, dx=0 \quad \text{ for }i\neq j.
\end{equation}
\end{lemma}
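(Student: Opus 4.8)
Set $a(x)=\left(\frac{2+\a}2\right)^2\abs{x}^{\a}f'(\l,u_{\l,\a})$, which lies in $L^\infty(B_1)$ since $u_{\l,\a}$ is bounded and $f\in C^{1,1}_{loc}$, so Propositions \ref{B1} and \ref{B111} apply with this choice of $a$. Recall that $u_{\l,\a}$ having Morse index $M$ means precisely that the quadratic form $Q(\eta):=\int_{B_1}\bigl(\abs{\nabla\eta}^2-a(x)\eta^2\bigr)\,dx$ is negative definite on some $M$-dimensional subspace of $H^1_0(B_1)$ and on no $(M+1)$-dimensional one. The plan is to produce, through the min-max levels of Propositions \ref{B1}--\ref{B111}, $M$ numbers $\Lambda_1\le\dots\le\Lambda_M<0$ with functions $\psi_1,\dots,\psi_M\in\mathcal H$ attaining them and solving \eqref{fin-fine}, mutually orthogonal as in \eqref{perpendicolari}, and then to show that the next min-max level is nonnegative, so that no further such pair exists.

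For the construction of the $\psi_i$, I would repeat the annulus argument of Lemma \ref{B3}. For $\e>0$ small, on $A_\e=B_1\setminus\overline{B_\e(0)}$ the Dirichlet problem $-\Delta w-a(x)w=\mu w$ still has at least $M$ negative eigenvalues, by continuity of the eigenvalues with respect to the domain under removal of a small ball in the plane (see \cite{BNV}). On $A_\e$ the weight $\abs{x}^{-2}$ is bounded above and below by positive constants, so, reasoning as in Lemma 2.1 of \cite{GGPS}, the weighted problem $-\Delta w-a(x)w=\Lambda\abs{x}^{-2}w$ on $A_\e$ is equivalent to the unweighted one and hence has $M$ negative eigenvalues $\Lambda_1(A_\e)\le\dots\le\Lambda_M(A_\e)<0$ with $\abs{x}^{-2}$-orthonormal eigenfunctions $\phi_1,\dots,\phi_M\in H^1_0(A_\e)$. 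Extending each $\phi_i$ by zero on $B_\e(0)$ gives functions in $\mathcal H$, still $\abs{x}^{-2}$-orthonormal, with $Q\bigl(\sum c_i\phi_i\bigr)=\sum\Lambda_i(A_\e)c_i^2<0$ whenever $(c_i)\ne0$, while $\int_{B_1}\abs{x}^{-2}\bigl(\sum c_i\phi_i\bigr)^2=\sum c_i^2>0$. Thus $V:=\mathrm{span}\{\phi_1,\dots,\phi_M\}$ is an $M$-dimensional subspace of $\mathcal H$ on which the Rayleigh quotient defining $\Lambda_1$ in Proposition \ref{B1} is strictly negative. In particular $\Lambda_1<0$, so Proposition \ref{B1} yields $\psi_1\in\mathcal H$, strictly positive and simple, attaining $\Lambda_1$ and solving \eqref{fin-fine}. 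For $2\le i\le M$, imposing orthogonality to $\psi_1,\dots,\psi_{i-1}$ cuts $V$ down to a subspace of dimension at least $M-(i-1)\ge1$ on which the Rayleigh quotient stays negative; hence the $i$-th min-max level $\Lambda_i$ of Proposition \ref{B111} is negative and is attained by some $\psi_i\in\mathcal H$ solving \eqref{fin-fine} and orthogonal to $\psi_1,\dots,\psi_{i-1}$. This produces the $M$ pairs $(\Lambda_i,\psi_i)$ together with \eqref{perpendicolari}.

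To show there is no further solution, observe that any $\psi\in\mathcal H$ solving \eqref{fin-fine} with parameter $\Lambda<0$ satisfies, on testing its weak formulation with $\phi=\psi$, $Q(\psi)=\Lambda\int_{B_1}\abs{x}^{-2}\psi^2<0$, and $\psi\in\mathcal H\subset H^1_0(B_1)$. Suppose the next level $\Lambda_{M+1}:=\inf\bigl\{Q(\eta)/\int_{B_1}\abs{x}^{-2}\eta^2:\ \eta\in\mathcal H,\ \eta\perp\mathrm{span}\{\psi_1,\dots,\psi_M\}\bigr\}$ were negative. Then Proposition \ref{B111}, applied once more, would give $\psi_{M+1}\in\mathcal H$ attaining it, orthogonal to $\psi_1,\dots,\psi_M$ and with $Q(\psi_{M+1})<0$. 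Since $\psi_1,\dots,\psi_{M+1}$ are nonzero and pairwise orthogonal for the $\abs{x}^{-2}$ inner product, they are linearly independent in $H^1_0(B_1)$, and after normalizing $\int_{B_1}\abs{x}^{-2}\psi_i\psi_j=\delta_{ij}$ the weak formulations give $Q\bigl(\sum_{i=1}^{M+1}c_i\psi_i\bigr)=\sum_{i=1}^{M+1}\Lambda_i c_i^2<0$ for $(c_i)\ne0$. Thus $Q$ would be negative definite on an $(M+1)$-dimensional subspace of $H^1_0(B_1)$, contradicting that $u_{\l,\a}$ has Morse index $M$. Hence $\Lambda_{M+1}\ge0$, and together with the construction this shows that \eqref{fin-fine} has a solution with $\Lambda<0$ exactly for $\Lambda\in\{\Lambda_1,\dots,\Lambda_M\}$, each producing the corresponding $\psi_i$; this proves the statement.

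The main obstacle, the same one underlying Lemma \ref{B3} and Remark \ref{rem1}, is that in $\R^2$ there is no Hardy inequality, so an eigenfunction of $-\Delta-a$ in $H^1_0(B_1)$ (a smooth function which need not vanish at the origin) generally does not belong to $\mathcal H$; hence one cannot simply transplant the $M$-dimensional negative subspace of $Q$ from $H^1_0(B_1)$ into $\mathcal H$. The annulus approximation, combined with the continuity of the Dirichlet eigenvalues under removal of a small ball in the plane, is precisely what repairs this and produces the required negative Rayleigh quotient inside $\mathcal H$. A second point needing care is the attainment of the infima: here the \emph{strict} negativity of each $\Lambda_i$ is essential, so that Propositions \ref{B1}--\ref{B111}, rather than the counterexample of Proposition \ref{P1}, apply.
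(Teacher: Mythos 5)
Your proposal is correct and follows essentially the same route as the paper's proof: the annulus approximation $A_\e$ with continuity/monotonicity of the Dirichlet eigenvalues, the equivalence on $A_\e$ of the weighted and unweighted problems as in \cite{GGPS}, extension by zero to get a negative $M$-dimensional subspace inside $\mathcal H$ so that Propositions \ref{B1} and \ref{B111} apply, and the Morse index contradiction to exclude an $(M+1)$-th negative level. The only differences are cosmetic (you argue exactness through the nonnegativity of the next min-max level rather than directly orthogonalizing a hypothetical extra solution, and you are slightly more explicit about why the constrained infima stay negative), so no changes are needed.
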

\begin{proof}
Since the Morse index of $u_{\l,\a}$ is $M$, then there exist exactly $M$ eigenfunctions $\widehat\psi_1,\dots,\widehat\psi_M\in H^1_0(B_1(0))$, orthogonal in $L^2(B_1(0))$ that satisfy
\begin{equation}\label{fin-fin-fin}
\begin{cases}
-\Delta \widehat\psi_i-\left(\frac{2+\a}2\right)^2|x|^{\a}f'(\l,u_{\l,\a})\widehat\psi_i = \lambda_i \widehat\psi_i  \ \hbox{in} \ \ B_1(0)\\
\widehat\psi_i =0 \ \hbox{on} \ \partial B_1(0).
\end{cases}
\end{equation}
Since the eigenvalues $\l_i$ depend with continuity from the domain $\Omega$, see \cite{MM}, we have that for $\e>0$ sufficiently small, in the domain $A_\e=B_1(0)\setminus B_{\e}(0)$, the eigenvalue problem \eqref{fin-fin-fin} has at least $M$ negative eigenvalues $\l_i(A_\e)$. Moreover the monotonicity of the eigenvalues with respect to the domain implies  that $\l_{M+1}(A_\e)\geq \l_{M+1}(B_1(0))\geq 0$.\\
Summarizing for $\e$ small enough the eigenvalue problem \eqref{fin-fin-fin} in $A_\e$ has exactly $M$ negative eigenvalues.\\
In the domain $A_\e$ the eigenvalue problem \eqref{fin-fin-fin} and the weighted eigenvalue problem \eqref{fin-fine} are equivalent. Then in $A_\e$ the eigenvalue problem \eqref{fin-fine} has exactly $M$ negative eigenvalues $\L_i(A_\e)$ and $M$ negative eigenfunctions $\psi_{i,\e}$ that satisfy
$$
\begin{cases}
-\Delta \psi_{i,\e}-\left(\frac{2+\a}2\right)^2|x|^{\a}f'(\l,u_{\l,\a})\psi_{i,\e} = \frac{\Lambda_i(A_\e)}{|x|^2} \psi_{i,\e}  \ \hbox{in} \ A_\e\\
\psi_{i,\e} =0 \ \hbox{on} \ \partial A_\e.
\end{cases}$$
Since every function in $H^1_0(A_\e)$ extended to zero in $B_\e(0)$ belongs to $\mathcal{H} $ (as defined in Proposition \ref{B1}) then we have, using Propositions \ref{B1} and \ref{B111} that there exist $M$ function $\psi_i$ in $\mathcal{H} $ and $M$ numbers $\L_i<0$ that satisfy \eqref{fin-fine}. \\
If $\psi_i$ and $\psi_j$ satisfy \eqref{fin-fine} with numbers $\L_i\neq\L_j$ then, we can use the equation \eqref{fin-fine} to get 
$$(\L_i-\L_j)\int_{B_1(0)}\frac{\psi_i\psi_j}{|x|^2}\, dx=0$$
so that \eqref{perpendicolari} holds if $\L_i\neq \L_j$. If instead $\psi_i$ and $\psi_j$ are two functions linearly independent that satisfy \eqref{fin-fine} with the same number $\L_i$ then we can construct a new function $\widetilde \psi_j=\psi_j-c\psi_i$ such that $\widetilde \psi_j$ satisfies \eqref{fin-fine} with $\L_i$ (since the equation is linear) and $\psi_i$ and $\widetilde \psi_j$ are orthogonal, in the sense that they  satisfy \eqref{perpendicolari}.\\
Assume, by contradiction that there exists another function $\psi_{M+1}\in \mathcal{H}$ that satisfies \eqref{fin-fine} with a negative number $\L_{M+1}$. Then, as before, we can assume that $\psi_{M+1}$ is orthogonal to $\psi_1,\dots, \psi_M$ in the sense of \eqref{perpendicolari}. Then, using equation \eqref{fin-fine} for the functions $\psi_i$, $i=1,\dots,M+1$ we have that the quadratic form
$$Q(\psi, \psi)=\int_{B_1(0)}|\nabla \psi|^2-\left(\frac{2+\a}2\right)^2|x|^{\a}f'(\l,u_{\l,\a})\psi^2\, dx$$
is negative definite on the $(M+1)$-dimensional subspace of $H^1_0(B_1(0))$ spanned by $\psi_1,\dots,\psi_{M+1}$, contradicting the definition of Morse index of $u_{\l,\a}$.
\end{proof}

\section{The abstract existence result}\label{s3}
\noindent This section is devoted to study the general problem \eqref{a1}. 
\noindent Our first result studies the nondegeneracy of solutions to
\eqref{a1}.

\begin{proof}[Proof of Proposition \ref{a4}: ]
Let $v_{\l}$ be a solution of \eqref{i2}. From the symmetry
results of Gidas, Ni, Nirenberg \cite{GNN} we get that $v_{\l}$ is
radial. Setting $u_{\l,\a}(r)=v_{\l}\left(r^\frac
{2+\a}2\right)$, where $r=|x|$, a straightforward computation
shows that $u_{\l,\a}$ is a radial solution of \eqref{a1}
for any $\lambda\in(a,b)$ and $\alpha>0$.\\
Let us consider the linearized operator of \eqref{a1} at the
radial solution $u_{\l,\a}$, i.e. the problem \eqref{a7}.
Decomposing \eqref{a7} using the
spherical harmonic functions $Y_k(\theta)$, we get that $$w(r,\theta)=\sum_{k=0}^{+\infty}w_k(r)Y_k(\theta)$$ is a solution of
\eqref{a7} if and only if
$w_k(r):=\int_{S^1}w(r,\theta)Y_k(\theta)\, d\theta$ is a solution
of
 \begin{equation}\label{a8}
\begin{cases}
-w_k''-\frac 1r w'_k+\frac {k^2}{r^2}w_k = \left(\frac{2+\alpha}{2}\right)^2r^{\alpha}f'(\lambda ,u_{\l,\a})w_k, \quad \hbox{ in }(0,1) \\
w_k'(0) = 0=w_k(1) \hbox{ if } k=0 , \hbox{ and } w_k(0) =
0=w_k(1) \hbox{ if } k\geq 1\\
\int_0^1 r (w_k')^2+ \frac{w_k^2}r \, dr<\infty.
\end{cases}
\end{equation}
Note that, in order to $w$ be a smooth solution to \eqref{a7}, we must have $w_0'(0)=0$ and $w_k(0)=0$ for $k \ge 1$.
Letting $\eta_k(r)=w_k(r^{\frac 2{2+\a}})$, we have that
$\eta_k$ solves
 \begin{equation}\label{a9}
\begin{cases}
-\eta_k''-\frac 1r \eta'_k - f'(\lambda ,v_{\l})\eta_k= -\frac {4k^2}{(2+\a)^2r^2}\eta_k, \quad \hbox{ in }(0,1) \\
\eta_k'(0) = 0=\eta_k(1) \hbox{ if } k=0 , \hbox{ and } \eta_k(0)
= 0=\eta_k(1) \hbox{ if } k\geq 1 \\
\int_0^1 r (\eta_k')^2 + \frac{\eta_k^2}r \, dr<\infty.
\end{cases}
\end{equation}
First let us consider the case where $k\ge1$. Since the infimum $\nu_1(\l)$ in \eqref{a5} is achieved (see Lemma \ref{B3}), we have that there exists a solution to the problem
 \begin{equation}\label{a10}
\begin{cases}
-\eta''-\frac 1r \eta' - f'(\lambda ,v_{\l})\eta=\nu_1(\l)\eta, \quad \hbox{ in }(0,1) \\
\eta(1) = 0 \\
\int_0^1 r (\eta')^2+ \frac{\eta^2}r \, dr<\infty.
\end{cases}
\end{equation}
Moreover, by Lemma \ref{B2} we have that the solution to  \eqref{a10} satisfies $\eta(0)=0$. 
Finally by Lemma \ref{B3} there exist only one negative number $\nu_1(\l)$  and one positive function $\tilde\psi_{1,\l}$ verifying  \eqref{a10}. This proves  \eqref{a6}.
Using the reversed map
$r\mapsto r^\frac{2+\a}2$ we get that
$\psi_{\l,\a}(r)=\tilde\psi_{1,\l}\left(r^\frac{2+\a}2\right)$
is a solution to \eqref{a8} so that \eqref{a6a} holds.\\
If, else $v_{\hat\l}$ is degenerate then, from \cite{LiNi}, the
linearized operator at $v_{\hat\l}$ has a unique solution $\tilde
\psi_{\hat\l}$ which is radial. Then, letting
$\psi_{\hat\l,\a}(r)=\tilde \psi_{\hat\l}(r^{\frac{2+\a}2})$, we
get that $\psi_{\hat\l,\a}$ is a radial solution of the linearized
operator \eqref{a7} for any $\a>0$.
\end{proof}
Next we compute the Morse index of the radial solution to
\eqref{a1} which proves Theorem \ref{a11}.

\begin{proof}[Proof of Theorem \ref{a11}: ]

The Morse index $M$ of $u_{\l,\a}$ is the number of negative eigenvalues (counted with their multiplicity) of the problem
\begin{equation}\label{a18}
\begin{cases}
-\Delta w -\left(\frac{2+\alpha}{2}\right)^2|x|^{\alpha}f'(\lambda ,u_{\l,\a})w=\L w  & \hbox{ in }B_1 \\
w = 0, & \hbox{ on } \partial B_1\\
\int_{B_1}|\nabla w|^2\, dx<\infty.
\end{cases}
\end{equation}
Observe that, since $\left(\frac{2+\alpha}{2}\right)^2|x|^{\alpha}f'(\lambda ,u_{\l,\a})\in L^{\infty}(B_1(0))$, then the Morse index of $u_{\l,\a}$ is finite for any $\a\geq 0$.
Using Lemma \ref{B100}  then we get that
the Morse index of the radial solution $u_{\l,\a}$
coincides with the number of negative eigenvalues $\L$ of the problem, counted with their multiplicity,
\begin{equation}\label{a12}
\begin{cases}
-\Delta \psi-\left(\frac{2+\alpha}{2}\right)^2|x|^{\alpha}f'(\lambda ,u_{\l,\a})\psi=\frac \L{|x|^2}\psi  & \hbox{ in }B_1\setminus\{0\} \\
\psi= 0, & \hbox{ on } \partial B_1\\
\int_{B_1(0)}|\nabla \psi|^2+\frac{\psi^2}{|x|^2}\, dx <\infty
\end{cases}
\end{equation}
Arguing as in the previous proposition, setting $\psi_{i,k}(r):=\int_{S^1}\psi_i(r,\theta)Y_k(\theta)\, d\theta$ and then $\eta_{i,k}(r)=\psi_{i,k}(r^{\frac 2{2+\a}})$, we get that $\eta_{i,k}$ satisfies
\begin{equation}\label{a13}
\begin{cases}
-\eta_{i,k}''-\frac 1r \eta_{i,k}' - f'(\lambda ,v_{\l})\eta_{i,k}=4\frac{\L_i-k^2 }{(2+\a)^2}\frac{\eta_{i,k}}{r^2}, \quad \hbox{ in }(0,1) \\
\eta_{i,k}'(0) = 0=\eta_{i,k}(1) \hbox{ if } k=0 , \hbox{ and } \eta_{i,k}(0)
= 0=\eta_{i,k}(1) \hbox{ if } k\geq 1\\
\int_0^1r(\eta_{i,k}')^2+\frac{\eta_{i,k}}{r}\, dr<\infty
\end{cases}
\end{equation}
for some value of $k$ and $\L_i<0$. Since problem \eqref{a13} admits only one negative eigenvalue $\nu_1(\lambda)$ (see Lemma \ref{B3}), arguing as before, from \eqref{a13} we derive that
\begin{equation}\label{a14}
\nu_1(\lambda)=4\frac{\L_i-k^2 }{(2+\a)^2}.
\end{equation}
So we have that the modes $k$ which contribute to the Morse index
of the solution $u_{\l,\a}$ verify
$\frac{4k^2}{(2+\a)^2}+\nu_1(\l)<0$, i.e. $k<\frac{2+\a}2\sqrt
{-\nu_1(\l)}$. Finally recalling that the dimension of the
eigenspace of the Laplace-Beltrami operator on $S^1$ is $2$ for
any $k\ge1$ then
\eqref{morse-general} follows.\\
\end{proof}
We end this section proving the bifurcation result from the radial solution $u_{\l,\a}$.
\begin{proof}[Proof of Theorem  \ref{a15}: ]
Let $\a$ be fixed and consider the operator $T(\l,v):(a,b)\times C^{1,\g}_0(\bar B_1)\to  C^{1,\g}_0(\bar B_1)$ defined by
$$T(\l,v):=\left(-\Delta\right)^{-1}\left(\left(\frac{2+\alpha}{2}\right)^2|x|^{\alpha}f(\lambda ,v)\right).$$
$T$ is a compact operator for every fixed $\l$ and it is continuous with respect to $\l$. Let us define $S(\l,v):(a,b)\times C^{1,\g}_0(\bar B_1)\to  C^{1,\g}_0(\bar B_1)$  as
$$S(\l,v):=v-T(\l,v).$$
A function $v\in C^{1,\g}_0(\bar B_1)$ is a solution of \eqref{a1}
related to $\l$ if and only if $(\l,v)$ is in the kernel of $S$ and $v>0$ in $B_1$.\\
Using assumption ii) and \eqref{a6} we can find $\e>0$ such that the interval
$(\l(k)-\e,\l(k)+\e)$  does not contain degeneracy points of
\eqref{a1} other than $ \l(k)$.
Moreover the Morse index of the radial solution $u_{\l,\a}$ changes at $\l(k)$
because $\nu_1(\lambda)+\frac{4k^2}{(2+\alpha)^2}$ changes sign at $ \l(k)$.
Since the eigenspace of the Laplace Beltrami operator on $S^1$ related to $k$
is spanned by $\{\cos k\theta,\sin k\theta\}$, 
we have that, for $k\ge1$, the change in the Morse index is
exactly 2.
However, if we restrict to the space
\begin{equation}\label{X}
X=\left\{v\in C^{1,\g}_0(\bar B_1)\hbox{ such that }v=v(r,\theta)\hbox{ and }v\hbox{ is even in }\theta\right\}
\end{equation}
then only the spherical harmonic $\cos k\theta$ contributes to the Morse index and the change in the Morse index of $u_{\l,\a}$ in $X$ at the point $\l(k)$ is exactly one.\\
To prove the bifurcation result we use the cones
\begin{equation*}
\mathcal{C}^k:=\left\{
\begin{split}
v\in X, \hbox{ such that }v\geq 0 \hbox{ in }B_1\,,\,v(r,\theta)=v\left(r,\theta+\frac{2\pi}k\right), \hbox{ for }r\in[0,1],\\
\theta\in[0,2\pi], v(r,\theta) \hbox{ non increasing in }\theta \hbox{ for }0\leq\theta\leq \frac{\pi}k\, ,\, \,0\leq r\leq 1
\end{split}\right\}
\end{equation*}
introduced by Dancer in \cite{DA}.

The operator $S(\l,v)$ then maps $(a,b)\times \mathcal{C}^k\to \mathcal{C}^k$ (see \cite[Lemma 1]{DA}) and the compactness of $T$ allows us to compute the Leray-Schauder degree of $S$ in a suitable neighborhood of the radial solution $(\l(k), u_{\l(k),\a})$. The odd change in the Morse index of $u_{\l(k),\a}$ in $ \l(k)$ causes a change in the Leray-Schauder degree along the curve of radial solutions $(\l,u_{\l,\a})$ going from $(\l(k)-\e,u_{\l(k)-\e,\a})$ to $( \l(k)+\e,u_{\l(k)+\e,\a})$ so that the bifurcation occurs. Finally, since it is not difficult to see that $\mathcal{C}^k\cap \mathcal{C}^h$ contains only radial solutions (\cite{DA83}), then branches of nonradial solutions related to different values of $k$ are separated.
The Rabinowitz alternative theorem holds (see \cite[Theorem 1]{DA83}) and the bifurcation is indeed global and this proves the final part of the Theorem.
\end{proof}
\begin{remark}
In some particular cases it is possible to improve the statement of Theorem \ref{a15}. For example, if $f(s)=\l e^s$ it will be showed in the next section that the function $F_k(\l)$ in \eqref{a16} is strictly increasing in $\l$ for any $k$. This allow us to compute exactly the number of nonradial bifurcation points.
\end{remark}
\begin{remark} Hypothesis ii) in Theorem \ref{a15} is satisfied if the first eigenvalue $\nu_1(\l)$ is analytic in $\l$. This is the case, for example, if $f(\l,s)$ is analytic in $\l$, see \cite{K}. Anyway, the analyticity of $f(\l,s)$ is not a necessary condition for $ii)$ to hold, and in some cases a strict monotonicity property of $\nu_1(\l)$ can be proved directly (see Proposition \ref{p-ni1}).
\end{remark}
\begin{remark}\label{R1}
As we did in Theorem \ref{a15} we can state a bifurcation result with respect to the parameter $\a$, getting the following result:\\
{\it Let $\l\in (a,b)$ be fixed; then if
\begin{equation}\label{a17}
\a=\a_k=\frac{2k}{\sqrt {-\nu_1(\lambda)}}-2>0\hbox{ with }k\in \N
\end{equation}
there exists a branch of solutions  bifurcating from $(\a_k,u_{\lambda,\a_k})$ in $(0,+\infty)\times C^{1,\g}_0(\bar B_1)$.
Moreover, branches of bifurcating solutions
related to different values of $k$ are separated and either they are 
unbounded in the space $(0,+\infty)\times C^{1,\g}_0(\bar B_1)$ or they meet $\{0\}\times C^{1,\g}_0(\bar B_1) $.}\\
These branches of solutions are obviously different from those
obtained in Theorem \ref{a15}, but do not allow to derive if there
are other pairs $(\l,\a)$ other than those previously found.
However, we think that the set ${\mathcal V_k}$
obtained in this way coincides with the set ${\mathcal
U_k}$ of the Theorem \ref{a15}.
\end{remark}
\section{The case of the exponential nonlinearity}\label{s4}
In this section we apply the results of Section \ref{s3}
to the exponential nonlinearity $f(\l,s)=\l e^s$, i.e. to the problem
\begin{equation} \label{p'}
\begin{cases}
-\Delta u = \left(\frac{2+\alpha}{2}\right)^2\l \abs{x}^{\alpha}e^u, & \hbox{ in }B_1 \\
u > 0, & \hbox{ in }B_1 \\
 u = 0, & \hbox{ on } \partial B_1
\end{cases}
\end{equation}
for $\l>0$.  Let us start by considering radial solutions to the problem \eqref{p'}.
In this case there exists a maximal value of $\l$ that separates the threshold between existence and nonexistence of solutions of \eqref{p'}.
\begin{theorem}\label{b1}
Let us consider the problem \eqref{p'}. We have that,\\
$i)$ if $\l\in\left(0,2\right)$  there exist exactly two radial solutions $u_+$ and $u_-$ given by
\begin{equation}\label{sol-exp}
u_\pm(x) = \log\frac{8\delta_{\l}^{\pm}}{\l (\delta_{\l}^{\pm}+
\abs{x}^{2+\alpha})^2 }
\end{equation}
with
\begin{equation}
\delta_\l^{\pm} = \frac{4 - \l \pm 2 \sqrt{4 - 2\l} }{\l}.
\end{equation}
The solution $u_+$ is the minimal one and $u_-$ blows up at the origin as $\l \to 0^{+}$.\\
$ii)$ If $\l=2$ there is only the solution
\begin{equation}\label{sol-exp-2}
u(x) = \log\frac{4}{ (1 +
\abs{x}^{2+\alpha})^2 }.
\end{equation}
$iii)$ There is no solution if $\l>2$.
\end{theorem}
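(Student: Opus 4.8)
The plan is to exploit the equivalence between radial solutions of \eqref{p'} and radial solutions of the autonomous problem \eqref{i2} with $f(\l,s)=\l e^s$, which was established (for general $f$) in Proposition \ref{a4} via the substitution $r\mapsto r^{\frac{2+\a}{2}}$. Concretely, if $u(x)=u(r)$ is a radial solution of \eqref{p'}, then $v(s):=u\!\left(s^{\frac{2}{2+\a}}\right)$ solves
\begin{equation*}
\begin{cases}
-v''-\frac1s v' = \l e^{v}, & s\in(0,1),\\
v>0 \text{ in } (0,1),\quad v(1)=0,\quad v'(0)=0.
\end{cases}
\end{equation*}
Thus it suffices to classify the radial solutions of the planar Liouville–Gelfand problem $-\Delta v=\l e^v$ in $B_1$, $v=0$ on $\partial B_1$, and then pull back via $s=r^{\frac{2+\a}{2}}$ (equivalently $r=s^{\frac{2}{2+\a}}$), which turns $\abs{x}^2=r^2$ into $s^{\frac{4}{2+\a}}$ and, more to the point, sends the explicit Liouville profile $\log\frac{8\delta}{\l(\delta+s^2)^2}$ into $\log\frac{8\delta}{\l(\delta+\abs{x}^{2+\a})^2}$, giving exactly \eqref{sol-exp}.

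For the autonomous problem I would use the classical Liouville classification: all solutions of $-\Delta v=\l e^v$ on a ball that are radial and regular at the origin are, after the change of variables $t=\log s$, governed by an autonomous ODE whose general solution is $v(s)=\log\frac{8\delta}{\l(\delta+s^2)^2}$ for a one-parameter family $\delta>0$ (this is the standard radial Liouville formula; alternatively one quotes the Gidas–Ni–Nirenberg symmetry result \cite{GNN} together with the known explicit entire solutions). Imposing the Dirichlet condition $v(1)=0$ forces $8\delta=\l(\delta+1)^2$, i.e. $\l\delta^2+(2\l-8)\delta+\l=0$, whose discriminant is $(2\l-8)^2-4\l^2=16(4-2\l)$. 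Hence there are two positive roots $\delta_\l^{\pm}=\frac{4-\l\pm2\sqrt{4-2\l}}{\l}$ when $\l\in(0,2)$ (both are positive since their product is $1$ and their sum $\frac{8-2\l}{\l}$ is positive), a unique double root $\delta=1$ when $\l=2$ giving \eqref{sol-exp-2}, and no real root when $\l>2$. This yields parts $i)$–$iii)$ of the statement at the level of existence and nonexistence.

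It remains to identify $u_+$ as the minimal solution and to describe the behaviour of $u_-$ as $\l\to0^+$. For minimality one checks that $\delta\mapsto\log\frac{8\delta}{\l(\delta+\abs{x}^{2+\a})^2}$ is, for each fixed $x$ with $\abs{x}<1$, monotone in $\delta$ on the relevant range so that the larger root $\delta_\l^+$ produces the pointwise smaller solution; alternatively one invokes the standard fact that the branch emanating from $\l=0$ (where $\delta_\l^+\to\infty$ and $u_+\to0$) is the minimal branch, obtained by monotone iteration. The blow-up of $u_-$ is immediate from the formula: as $\l\to0^+$ one has $\delta_\l^-=\frac{4-\l-2\sqrt{4-2\l}}{\l}\to0$ (indeed $\delta_\l^-\sim\frac{\l}{8}$), so $u_-(0)=\log\frac{8}{\l\,\delta_\l^-}\to+\infty$. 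The main obstacle is not any single computation but rather making the Liouville classification step airtight, i.e. justifying that \emph{every} radial regular solution of $-\Delta v=\l e^v$ in the ball is of the stated explicit form; once that is granted, everything else reduces to the quadratic in $\delta$ and elementary asymptotics, and the pullback through $r\mapsto r^{(2+\a)/2}$ is exactly the device already used in Proposition \ref{a4}.
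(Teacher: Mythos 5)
Your treatment of part $i)$ is essentially the paper's: the same change of variables $r\mapsto r^{\frac{2+\a}{2}}$ reduces radial solutions of \eqref{p'} to the autonomous radial Liouville--Gelfand problem, the known one-parameter family $\log\frac{8\delta}{\l(\delta+s^2)^2}$ is invoked, and the Dirichlet condition gives the quadratic $\l\delta^2+(2\l-8)\delta+\l=0$, whose roots are $\delta_\l^{\pm}$; your minimality and blow-up remarks also match the paper, which simply asserts these facts for the classical problem \eqref{b2} and pulls them back.

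There is, however, a genuine gap in parts $ii)$ and $iii)$. Your quadratic-in-$\delta$ argument classifies only \emph{radial} solutions of \eqref{p'}, while the statements ``there is only the solution'' at $\l=2$ and ``there is no solution'' for $\l>2$ concern \emph{all} solutions; this is exactly how the theorem is used later (in the proof of Theorem \ref{texp-bif1-bis} the branch of \emph{nonradial} solutions is prevented from reaching $\l=2$ precisely because the solution there is unique). Because of the weight $\abs{x}^{\a}$ the Gidas--Ni--Nirenberg theorem does not apply to \eqref{p'} --- indeed the whole paper is about the existence of nonradial solutions --- so nonradial solutions for $\l\ge 2$ cannot be excluded by the radial classification alone. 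The paper closes this by arguing as in Mignot--Puel \cite[Theorem 1]{MP} (minimal-solution and convexity techniques for the Gelfand-type problem) to obtain uniqueness at $\l=2$ and nonexistence for $\l>2$. Your $iii)$ can be repaired cheaply: if a (possibly nonradial) solution existed for some $\l>2$, the sub/supersolution method gives a minimal solution, which is unique and hence radial by rotation invariance, contradicting the radial nonexistence you proved; but uniqueness among all solutions at $\l=2$ still requires the Mignot--Puel convexity argument, which is absent from your proposal.
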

\begin{proof}
We set $v(r) =
u(r^{\frac{2}{2+\alpha}})$, where $r=\abs{x}$. In this way, we are
led to the problem
\begin{equation}\label{b2}
\begin{cases}
- v'' -\frac{1}{r}v' = \l e^v, & \hbox{ in } 0 < r < 1 \\
v > 0, & \hbox{ in }0 < r < 1 \\
 v'(0) = 0 = v(1).
\end{cases}
\end{equation}
It is well known that the above problem admits solutions only if
$0 < \l \leq 2$, and all solutions are given by
\begin{equation}\nonumber
v(r) = \log\left(
\frac{8{\delta_\l}}{\l({\delta_\l}+r^2)^2}\right)
\qquad \hbox{where } \quad {\delta_\l}={\delta}_\l^{\pm} =
\frac{4-\l \pm \sqrt{16 -
8\l}}{\l} .
\end{equation}
The solution with
$$ {\delta}_\l^+ = \frac{4-\l + \sqrt{16 - 8\l}}{\l}$$
is the minimal solution which goes to zero uniformly as
$\l \to 0^{+}$, while the solution with
$$ {\delta}_\l^- = \frac{4-\l - \sqrt{16 - 8\l}}{\l}$$
blows up at the origin as $\l \to 0^{+}$. Turning back to \eqref{p'} by inverting the transformation $v(r) = u(r^{\frac{2}{2+\alpha}})$ we get $i)$ and \eqref{sol-exp-2}.
Finally, reasoning exactly as in the paper of Mignot and Puel \cite[Theorem 1]{MP}, we can prove that problem \eqref{p'} has a unique solution for $\l=2$ and no solutions for $\l>2$ concluding the proof.
\end{proof}
\begin{proposition}\label{stima-pohozaev-2}
Any smooth solution $u$ of \eqref{p'} must satisfy
\begin{equation}\label{stima-e^u-2}
\l \int_{B_1}|x|^{\a}e^{u_+}\, dx\leq \l \int_{B_1}|x|^{\a}e^u\, dx\leq \l \int_{B_1}|x|^{\a}e^{u_-}\, dx
\end{equation}
\end{proposition}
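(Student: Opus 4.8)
I would treat the two inequalities separately. The left inequality in \eqref{stima-e^u-2} is immediate: by Theorem \ref{b1}, $u_+$ is the minimal solution of \eqref{p'}, so every solution $u$ satisfies $u\ge u_+$ in $B_1$, hence $e^{u}\ge e^{u_+}$, and multiplying by $\l|x|^\a\ge0$ and integrating over $B_1$ gives $\l\int_{B_1}|x|^\a e^{u_+}\,dx\le\l\int_{B_1}|x|^\a e^{u}\,dx$.

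For the right inequality I would first observe that the substitution $\mu:=\left(\frac{2+\a}2\right)^2\l$, which ranges in $\left(0,\frac{(2+\a)^2}2\right)$ as $\l\in(0,2)$, turns \eqref{p'} into $-\Delta u=\mu|x|^\a e^{u}$ in $B_1$, $u=0$ on $\partial B_1$ — that is, into \eqref{exp} — so the statement is equivalent to Proposition \ref{stima-pohozaev}, which I would prove by a Pohozaev identity. Concretely, multiply the equation by $x\cdot\nabla u$ and integrate over $B_1\setminus B_\e$, then let $\e\to0$; this is legitimate because $\a>0$ makes $|x|^\a$ vanish at the origin and all the boundary contributions on $\partial B_\e$, which are $O(\e^{\a+1})$, disappear in the limit, while elliptic regularity gives $u\in C^2(B_1)\cap C^1(\overline{B_1})$. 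On the left-hand side, using $u=0$ and hence $\nabla u=(\partial_\nu u)\,x$ on $\partial B_1$, the standard manipulation produces the boundary term $-\tfrac12\int_{\partial B_1}(\partial_\nu u)^2\,d\sigma$. On the right-hand side, writing $e^{u}(x\cdot\nabla u)=x\cdot\nabla(e^{u}-1)$ and integrating by parts with $\operatorname{div}(|x|^\a x)=(2+\a)|x|^\a$ and $\int_{B_1}|x|^\a\,dx=\frac{2\pi}{2+\a}$, one obtains $-(2+\a)m+2\pi\mu$, where $m:=\mu\int_{B_1}|x|^\a e^{u}\,dx$. Equating the two sides,
\[
\tfrac12\int_{\partial B_1}(\partial_\nu u)^2\,d\sigma=(2+\a)\,m-2\pi\mu .
\]
Integrating the equation itself gives $m=-\int_{\partial B_1}\partial_\nu u\,d\sigma$, so Cauchy--Schwarz on $\partial B_1$ (length $2\pi$) yields $m^2\le 2\pi\int_{\partial B_1}(\partial_\nu u)^2\,d\sigma=4\pi\bigl((2+\a)m-2\pi\mu\bigr)$, i.e.\ $m^2-4\pi(2+\a)m+8\pi^2\mu\le0$; solving this quadratic inequality gives exactly \eqref{stima-e^u}.

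The last step is to recognise the endpoints. From \eqref{sol-exp}, passing to polar coordinates and substituting $s=|x|^{2+\a}$, a direct computation gives $\l\int_{B_1}|x|^\a e^{u_\pm}\,dx=\dfrac{16\pi}{(2+\a)(\delta_\l^\pm+1)}$, and since $\delta_\l^\pm+1=\dfrac{2}{\l}\bigl(2\pm\sqrt{4-2\l}\bigr)$ this equals $\dfrac{4\pi}{2+\a}\bigl(2\mp\sqrt{4-2\l}\bigr)$. On the other hand, for $\mu=\left(\frac{2+\a}2\right)^2\l$ one has $\sqrt{(2+\a)^2-2\mu}=(2+\a)\dfrac{\sqrt{4-2\l}}{2}$, so dividing the two bounds in \eqref{stima-e^u} by $\left(\frac{2+\a}2\right)^2$ returns precisely $\l\int_{B_1}|x|^\a e^{u_+}\,dx$ (lower) and $\l\int_{B_1}|x|^\a e^{u_-}\,dx$ (upper); since $m=\left(\frac{2+\a}2\right)^2\l\int_{B_1}|x|^\a e^{u}\,dx$, this is exactly \eqref{stima-e^u-2}.

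I expect the only genuine subtlety to be the Pohozaev computation with the singular weight: the explicit $x$-dependence of the nonlinearity contributes the extra term $\a|x|^\a$ inside $\operatorname{div}(|x|^\a x)$, and it is precisely this term that upgrades the dimensional constant $2$ to $2+\a$ and makes the resulting estimate sharp — the Cauchy--Schwarz step being an equality exactly when $\partial_\nu u$ is constant on $\partial B_1$, i.e.\ for the radial solutions $u=u_\pm$. The integrability at the origin and the vanishing of the inner boundary terms require a line of care, handled by the exhaustion $B_1\setminus B_\e$ as above.
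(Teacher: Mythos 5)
Your argument is correct and is essentially the paper's own proof: the left inequality by minimality of $u_+$, and the right one via the Pohozaev identity with the weight $|x|^\a$ (equivalently in the $\mu$-variable), the Cauchy--Schwarz inequality on $\partial B_1$ combined with the integrated equation, and the resulting quadratic inequality in $\mu\int_{B_1}|x|^\a e^u$, whose roots are identified with the values attained by the radial solutions $u_\pm$. Your explicit evaluation $\l\int_{B_1}|x|^\a e^{u_\pm}\,dx=\frac{4\pi}{2+\a}\bigl(2\mp\sqrt{4-2\l}\bigr)$ just carries out the ``direct computation'' the paper invokes, so there is no substantive difference.
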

\begin{proof}
Note that the first inequality is trivial, being $u_+$ the minimal solution. In order to prove the other inequality we
use the well known Pohozaev identity. We have
\begin{equation}\label{pohozaev}
\frac{(2+\a)^3}4\l\int_{B_1}|x|^{\a}e^u\, dx-\frac{(2+\a)^2}2\pi\l=\frac 12 \int_{\de B_1}\left(\frac{\partial u}{\partial \nu}\right)^2\,ds.
\end{equation}
Using the Schwartz inequality we get
\begin{equation*}
\left(\,\, \int_{\de B_1}\frac{\partial u}{\partial \nu}\,ds\right)^2\leq 2\pi \int_{\de B_1}\left(\frac{\partial u}{\partial \nu}\right)^2\,ds
\end{equation*}
which turns in an equality if and only if $u$ is radial, so that $\frac{\partial u}{\partial \nu}$ is constant on $\de B_1$.
Now we integrate equation \eqref{p'} in $B_1$,  getting
\begin{equation}\label{stima-eq}
\frac 12 \int_{\de B_1}\left(\frac{\partial u}{\partial \nu}\right)^2\,ds\geq \frac 1{4\pi}\left(\,\, \int_{\de B_1}\frac{\partial u}{\partial \nu}\,ds\right)^2=\frac 1{4\pi}\left(\frac{(2+\a)^2}4\l \int_{B_1}|x|^{\a}e^u\, dx\right)^2.
\end{equation}
Inserting \eqref{stima-eq} into \eqref{pohozaev} then we obtain that the following inequality holds
\begin{equation}\label{ineq}
4\pi(2+\a)\l\int_{B_1}|x|^{\a}e^u\, dx-8\pi^2 \l \geq \frac{(2+\a)^2}4\left(\l \int_{B_1}|x|^{\a}e^u\, dx\right)^2.
\end{equation}
Observing that the inequality \eqref{ineq} becomes an equality if $u$ coincides with  the radial solutions $u_{\pm}$ of the previous theorem,
by direct computation we get that
$$\l \int_{B_1}|x|^{\a}e^{u_+}\, dx\leq \l \int_{B_1}|x|^{\a}e^u\, dx\leq \l \int_{B_1}|x|^{\a}e^{u_-}\, dx,$$
and the claim follows.
\end{proof}
In the following we refer to $u_{\l,\a}$
as the solution which
blows up as $\l\rightarrow0^+$, i.e.
\begin{equation}\label{b3}
u_{\l,\a}(r)=\log \left(
\frac{8{\delta_\l}}{\l
({\delta_\l}+r^{2+\a})^2}\right) \qquad \hbox{where } \quad
{\delta_\l} = \frac{4-\l - \sqrt{16 -
8\l}}{\l} .
\end{equation}
Using the map $r\mapsto r^\frac2{2+\a}$ we have that \eqref{b3}
becomes
\begin{equation}\label{b3a}
v_{\l}(r)=\log\frac{8{\delta_\l}}{\l({\delta_\l}+r^{2})^2}
\end{equation}
which is a nondegenerate solution of

\begin{equation}\label{b4}
\begin{cases}
-\Delta v=\l e^v, & \hbox{ in } B_1(0)\\
v=0 & \hbox{ on } \de B_1(0)
\end{cases}
\end{equation}
with Morse index $1$ for $0<\lambda<2$. So we can define  $\nu_1(\l)$ as in \eqref{a5}. Since in this case we
know explicitly the solution $v_{\l}$ we have,
\begin{equation}\label{ni-lambda-exp}
\nu_1(\l):=\inf_{\eta\in H\atop\eta\neq 0
}\frac{\int_0^1r(\eta')^2\, dr-\int_0^1
\frac{8\d_\l}{\left(\d_\l+r^2\right)^2}r \eta^2\,
dr}{\int_0^1 r^{-1}\eta^2 \, dr}.
\end{equation}

We are now in position to prove Proposition \ref{p-ni1},
%
\begin{proof}[Proof of Proposition \ref{p-ni1}: ]
Let $\tilde\psi_{1,\l}$ be the first eigenfunction corresponding
to the first eigenvalue $  \nu_1(\l)$. It solves 
\begin{equation}\label{a}
\begin{cases}
-\tilde\psi_{1,\l}''-\frac 1r \tilde\psi_{1,\l}'-\frac {8\d_\l}{\left(\d_\l+r^2\right)^2}\tilde\psi_{1,\l} = \frac{\nu_1(\l)}{r^2}\tilde\psi_{1,\l}  & \hbox{ for }r\in (0,1) \\
\tilde\psi_{1,\l}(r)>0& \hbox{ for }r\in (0,1)\\
\tilde\psi_{1,\l}(1)= 0, \quad \int_0^1r(\tilde\psi_{1,\l}')^2+\frac{\tilde\psi_{1,\l}^2}r\, dr<\infty
\end{cases}
\end{equation}
We let $\xi:=\frac{\d_\l-r^2}{\d_\l+r^2}$ and
$R(\xi):=\tilde\psi_{1,\l}(r)$. Then $R(\xi)$ solves
\begin{equation}\label{leg-eq}
\begin{cases}
(1-\xi^2)R''-2\xi R'+  \frac{\nu_1(\l)}{1-\xi^2}R+2R=0     & \hbox{ for }\xi\in\left( \frac{\d_\l-1}{\d_\l+1} ,1\right) \\
 R\left(\frac{\d_\l-1}{\d_\l+1}\right)=0.
\end{cases}
\end{equation}
Equation \eqref{leg-eq} is the classical {\em Legendre equation}
and it has
$$R(\xi)=\left( \frac{1+\xi}{1-\xi} \right)^{\frac\gamma 2}\left(\xi-\gamma\right)$$
with $\gamma^2=-\nu_1(\l)$ as a solution. Moreover, for
$\gamma=\frac{\d_\l -1}{\d_\l+1}<0$, $R(\xi)$ is strictly
positive in $( \frac{\d_\l-1}{\d_\l+1} ,1)$ and satisfies
the boundary condition. This means that $\nu_1(\l)=-\left(\frac{\d_\l -1}{\d_\l+1}\right)^2$. 
Using the value of $\d_\l$ in
\eqref{b3} we have that \eqref{ni-exp} follows straightforward.
Inverting the transformation we get that
$$\tilde\psi_{1,\l}(r)=r^{-\gamma}\left( \frac {\d_\l-r^2}{\d_\l+r^2}-\gamma\right).$$
and since $\gamma=\frac{\d_\l -1}{\d_\l+1}$ then \eqref{b5}
follows by straightforward computations. \textcolor{red}{ The uniqueness of $\nu_1(\l)$ and $\tilde\psi_{1,\l}$ follows from Lemma \ref{B3} since $\tilde\psi_{1,\l}$ satisfies $\int_0^1\frac{\tilde\psi_{1,\l}^2}{r}\, dr <\infty$.}
\end{proof}
\noindent Once we know the explicit value of $\nu_1(\l)$, not only
we can apply the results of Section \ref{s3}, but we have even
more accurate results.
%
\begin{theorem}\label{t1.5}
Let $v_{\l}$ be the unique radial solution of \eqref{i2} with
Morse index $1$ and $u_{\l,\a}(r)=v_\l(r^{\frac {2+\a}2})$.
Then $u_{\l,\a}$ is degenerate if and only if
 $\l$ and $\a$  satisfy
\begin{equation}\label{deg-exp}
\frac{2-\l}2=\frac {4k^2}{(2+\a)^2}
\end{equation}
for some integer $k\geq 1$. The solutions of the linearized
equation at the values of $\a$ and $\l$ that satisfy
\eqref{deg-exp} are given by, in polar coordinates,
\begin{equation}\label{b5a}
\psi_k(r,\theta)=r^k\frac{2(2+\a)\left(1-r^{2(2+\a)}\right)+4k(1-r^{2+\a})^2}{(2(2+\a)^2-8k^2)(1-r^{2+\a})^2+8(2+\a)^2r^{2+\a}}
(A\sin k\theta+B\cos k\theta) \quad
\end{equation}
for any constants $A,B\in\R$. Finally the Morse index of
$u_{\l,\a}$ is equal to
\begin{equation}\label{b7}
m(\l,\alpha)=
\begin{cases}
1+ 2\left[\frac{\a+2}{2} \sqrt{\frac{2-\l}2}\right] &\hbox{ if }
\,\,\, \frac{\a+2}{2}
\sqrt{\frac{2-\l}2} \not\in \N \\
 (\a+2)\sqrt{\frac{2-\l}2}  -1 &\hbox{ if } \,\,\,  \frac{\a+2}{2} \sqrt{\frac{2-\l}2}\in \N
\end{cases}
\end{equation}
and  $m(\l,\a)\rightarrow+\infty$ as $\a\rightarrow+\infty$.
\end{theorem}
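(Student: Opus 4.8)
The plan is to specialize the abstract results of Section \ref{s3} to $f(\lambda,s)=\lambda e^s$, using the explicit value of $\nu_1(\lambda)$ obtained in Proposition \ref{p-ni1}. By Theorem \ref{b1} and the discussion following \eqref{b3a}, for every $\lambda\in(0,2)$ the function $v_\lambda$ (the solution in \eqref{sol} attached to $\delta_\lambda^-$) is a radially nondegenerate solution of \eqref{i2} with Morse index $1$, so the hypotheses of Proposition \ref{a4} and Theorem \ref{a11} are met with $f'(\lambda,v_\lambda)=\lambda e^{v_\lambda}=\frac{8\delta_\lambda}{(\delta_\lambda+r^2)^2}$. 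Moreover Proposition \ref{p-ni1} gives $\nu_1(\lambda)=\frac{\lambda-2}{2}$, hence $-\nu_1(\lambda)=\frac{2-\lambda}{2}>0$.

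First I would read off the degeneracy statement from Proposition \ref{a4}: $u_{\lambda,\alpha}$ is degenerate if and only if $\nu_1(\lambda)=-\frac{4k^2}{(2+\alpha)^2}$ for some integer $k\geq1$, which after inserting $\nu_1(\lambda)=\frac{\lambda-2}{2}$ is exactly \eqref{deg-exp}. Next, the Morse index: substituting $\sqrt{-\nu_1(\lambda)}=\sqrt{\frac{2-\lambda}{2}}$ into \eqref{morse-general} of Theorem \ref{a11} turns it into \eqref{b7}; and since $\lambda<2$ is fixed while $\frac{\alpha+2}{2}\sqrt{\frac{2-\lambda}{2}}\to+\infty$ as $\alpha\to+\infty$, the last assertion $m(\lambda,\alpha)\to+\infty$ follows at once.

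It remains to check that \eqref{b5a} is the image of the eigenfunction \eqref{a6a} under $r\mapsto r^{\frac{2+\alpha}{2}}$. By Proposition \ref{a4} the nontrivial solutions of \eqref{a7} are $\widetilde\psi_{1,\lambda}\!\left(r^{\frac{2+\alpha}{2}}\right)(A\sin k\theta+B\cos k\theta)$, where I use that on the degeneracy locus $\frac{2+\alpha}{2}\sqrt{-\nu_1(\lambda)}=k\in\N$, so the angular frequency appearing in \eqref{a6a} is exactly $k$, and $\widetilde\psi_{1,\lambda}$ is given by \eqref{b5}. From \eqref{deg-exp} one has $\sqrt{4-2\lambda}=\frac{4k}{2+\alpha}$ and $\lambda=2-\frac{8k^2}{(2+\alpha)^2}$; substituting $r\mapsto r^{\frac{2+\alpha}{2}}$ in \eqref{b5}, the prefactor $r^{\sqrt{4-2\lambda}/2}$ becomes $r^k$, while multiplying the numerator and the denominator of the remaining rational factor by $(2+\alpha)^2$ produces precisely the quotient in \eqref{b5a}, up to the harmless overall constant $(2+\alpha)$. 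Since the radial factor in \eqref{a6a} is determined only up to a scalar, \eqref{b5a} follows.

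I do not expect a genuine obstacle: the theorem is essentially a corollary of Proposition \ref{a4}, Theorem \ref{a11} and Proposition \ref{p-ni1}. The one point requiring care is the bookkeeping in the substitution $r\mapsto r^{\frac{2+\alpha}{2}}$ in \eqref{b5} — tracking the exponents $2+\alpha$ and $2(2+\alpha)$ and the multiplicative constants — and confirming that the two ways of writing the angular part of $\psi$ (through $\sqrt{-\nu_1(\lambda)}$ and through the integer $k$) agree on the degeneracy set.
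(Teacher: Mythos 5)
Your proposal is correct and follows essentially the same route as the paper: the paper likewise deduces \eqref{deg-exp} from \eqref{a6} together with $\nu_1(\lambda)=\frac{\lambda-2}{2}$, obtains \eqref{b5a} by applying $r\mapsto r^{\frac{2+\alpha}{2}}$ to $\widetilde\psi_{1,\lambda}$ in \eqref{b5} times the $k$-th spherical harmonic, and gets \eqref{b7} by inserting \eqref{ni-exp} into \eqref{morse-general}. Your explicit bookkeeping of the substitution (the exponent becoming $r^k$ and the rescaling by powers of $2+\alpha$) is exactly the computation the paper leaves implicit.
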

\begin{proof}
By \eqref{a6} of Proposition \ref{a4} and \eqref{ni-exp} we have
that $u_{\l,\a}$ is degenerate if and only if \eqref{deg-exp}
holds. 
Moreover, as said in Proposition \ref{a4}, the solutions of the linearized equation \eqref{a7} at the degeneracy points \eqref{deg-exp}, are given by $\psi_{k}(r)=\tilde \psi_{1,\l}(r^{\frac{2+\a}2})$ multiplied by the $k$-th spherical harmonic, so that  \eqref{b5a} follows.\\
Finally, inserting \eqref{ni-exp} in \eqref{morse-general} of
Theorem \ref{a11} we get \eqref{b7}.
\end{proof}
Our next step is to apply Theorem \ref{a15} to \eqref{p'} getting the bifurcation result.
\begin{theorem}
\label{texp-bif1-bis} Let $\a>0$ be fixed and let $u_{\l,\a}$ be as
defined above. There exist $j$ values
\begin{equation}\label{b8-bis}
\l_k=2-\frac{8 k^2}{(\a+2)^2}\hbox{ for }k=1,\dots,j,\hbox{ with }
j=\begin{cases}
1+\left[\frac \a2\right] & \hbox{if } \frac \a2\notin \N\\
\frac \a2 &\hbox{ if } \frac \a2\in \N.
 \end{cases}
\end{equation}
such that $(\l_k,u_{\l_k,\a})$ is a nonradial bifurcation point
for the curve of radial solutions $u_{\l,\a}$ of \eqref{p'}.
The bifurcation is global, and the branches are separated and unbounded in $(0,2)\times C^{1,\g}_0(\bar B_1)$.\\
\end{theorem}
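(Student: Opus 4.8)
The plan is to specialize Theorem~\ref{a15} to $f(\lambda,s)=\lambda e^s$ on $(a,b)=(0,2)$, feeding in the explicit value of $\nu_1$ from Proposition~\ref{p-ni1}; note that \eqref{p'} is precisely \eqref{a1} for this $f$, and the associated autonomous problem \eqref{i2} is then \eqref{b4}. First I would verify the hypotheses of Theorem~\ref{a15}: $f(\lambda,s)=\lambda e^s$ satisfies \eqref{j1} with $(a,b)=(0,2)$ and has $f(\lambda,0)=\lambda>0$; and by Theorem~\ref{b1} and \eqref{b3a}, for every $\lambda\in(0,2)$ the function $v_\lambda$ of \eqref{b3a} is a radially nondegenerate, Morse-index-one solution of \eqref{b4}, so that $\nu_1(\lambda)$ in \eqref{a5} is well defined and, by Proposition~\ref{p-ni1}, equals $\frac{\lambda-2}{2}$. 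Here $u_{\lambda,\alpha}(r)=v_\lambda(r^{\frac{2+\alpha}{2}})$ is the non-minimal radial solution of \eqref{p'}, as in the statement.

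Next I would analyze $F_k(\lambda)=\nu_1(\lambda)+\frac{4k^2}{(2+\alpha)^2}=\frac{\lambda-2}{2}+\frac{4k^2}{(2+\alpha)^2}$. Being affine with slope $\tfrac12>0$, $F_k$ is strictly increasing on $(0,2)$; hence its zeros are isolated, so hypothesis ii) of Theorem~\ref{a15} holds automatically, and $F_k$ changes sign exactly once, precisely at $\lambda_k=2-\frac{8k^2}{(2+\alpha)^2}$, so hypothesis i) holds provided $\lambda_k\in(0,2)$. Now $\lambda_k<2$ for every $k\ge1$, while $\lambda_k>0$ if and only if $4k^2<(2+\alpha)^2$, i.e. $k<1+\frac{\alpha}{2}$. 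Counting the integers $k\ge1$ with $k<1+\frac{\alpha}{2}$ yields precisely the value $j$ of \eqref{b8-bis}: if $\frac{\alpha}{2}\notin\N$ then $1+\frac{\alpha}{2}\notin\N$ and the count is $\lfloor 1+\frac{\alpha}{2}\rfloor=1+[\frac{\alpha}{2}]$, while if $\frac{\alpha}{2}\in\N$ the bound $1+\frac{\alpha}{2}$ is an integer that is not attained, so the count is $\frac{\alpha}{2}$.

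With the hypotheses verified, Theorem~\ref{a15} applies for each $k=1,\dots,j$ and produces a branch of nonradial solutions of \eqref{p'} bifurcating from $(\lambda_k,u_{\lambda_k,\alpha})$, with branches of distinct modes $k$ separated --- this being exactly the separation statement of Theorem~\ref{a15}, which rests on the fact that the Dancer cones $\mathcal{C}^k$ and $\mathcal{C}^h$ intersect only along radial solutions. It then remains to sharpen the Rabinowitz-type trichotomy of Theorem~\ref{a15} into the assertion that each branch is unbounded in $(0,2)\times C^{1,\gamma}_0(\bar B_1)$. Here the strict monotonicity of $F_k$ is used a second time: since $F_k$ has a unique zero in $(0,2)$, the $k$-th branch cannot meet the curve of radial solutions at a second bifurcation point associated with the same mode $k$, which excludes the second alternative. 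Each of the two remaining alternatives --- the branch being unbounded, or meeting the boundary $\{0,2\}\times C^{1,\gamma}_0(\bar B_1)$, i.e. $\lambda\to0^+$ or $\lambda\to2^-$ along it --- says that the branch escapes every compact subset of $(0,2)\times C^{1,\gamma}_0(\bar B_1)$, which is the claimed unboundedness in this space; positivity of the solutions along the branch is preserved by the cone construction, so the branch stays in the solution set of \eqref{p'}. I do not expect a genuine obstacle: the argument is a direct specialization of Theorem~\ref{a15} via Proposition~\ref{p-ni1}, the only points requiring some care being the combinatorial count of $j$ and the bookkeeping of the boundary alternative in the global-bifurcation statement.
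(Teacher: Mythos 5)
Your reduction to Theorem \ref{a15} is exactly the paper's route: you verify \eqref{j1} and $f(\l,0)>0$, the radial nondegeneracy and Morse index one of $v_\l$ for $\l\in(0,2)$, insert $\nu_1(\l)=\frac{\l-2}{2}$ from Proposition \ref{p-ni1}, use the strict monotonicity of $F_k$ to get hypotheses i)--ii), count $j$ correctly, and exclude a return of the $k$-th branch to the radial curve because $F_k$ has a single zero. Up to that point the proposal matches the paper.

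The gap is in the final unboundedness step. The theorem asserts that each branch is unbounded in $(0,2)\times C^{1,\gamma}_0(\bar B_1)$, which, since $(0,2)$ is a bounded interval, means unbounded in the $C^{1,\gamma}$-norm; this stronger statement is what is actually used afterwards (Corollary \ref{c1} combines it with the a priori bound of Theorem \ref{t-CL} to conclude that each branch extends over all of $(0,\l_k)$ and blows up as $\l\to0^+$). You instead reinterpret the two remaining Rabinowitz alternatives --- norm-unboundedness, or meeting the boundary $\{0\}\times C^{1,\gamma}_0(\bar B_1)$ or $\{2\}\times C^{1,\gamma}_0(\bar B_1)$ --- as both amounting to ``escaping every compact subset of $(0,2)\times C^{1,\gamma}_0(\bar B_1)$,'' which is strictly weaker: a priori the branch could stay bounded in norm while its $\l$-values tend to $0$ or $2$, and your argument does not exclude this. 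The paper closes precisely this case with two observations you omit: a norm-bounded branch cannot meet $\{0\}\times C^{1,\gamma}_0(\bar B_1)$ because at $\l=0$ the only solution of \eqref{p'} is the trivial one, which is nondegenerate and isolated; and it cannot meet $\{2\}\times C^{1,\gamma}_0(\bar B_1)$ because at $\l=2$ problem \eqref{p'} has a unique solution (Theorem \ref{b1}) which is only radially degenerate, so a branch of nonradial solutions cannot terminate there. Without these two exclusions your proof delivers a weaker conclusion than the one stated, and the subsequent multiplicity results would not follow.
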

\begin{figure}[h!] 
\includegraphics[scale=0.6]{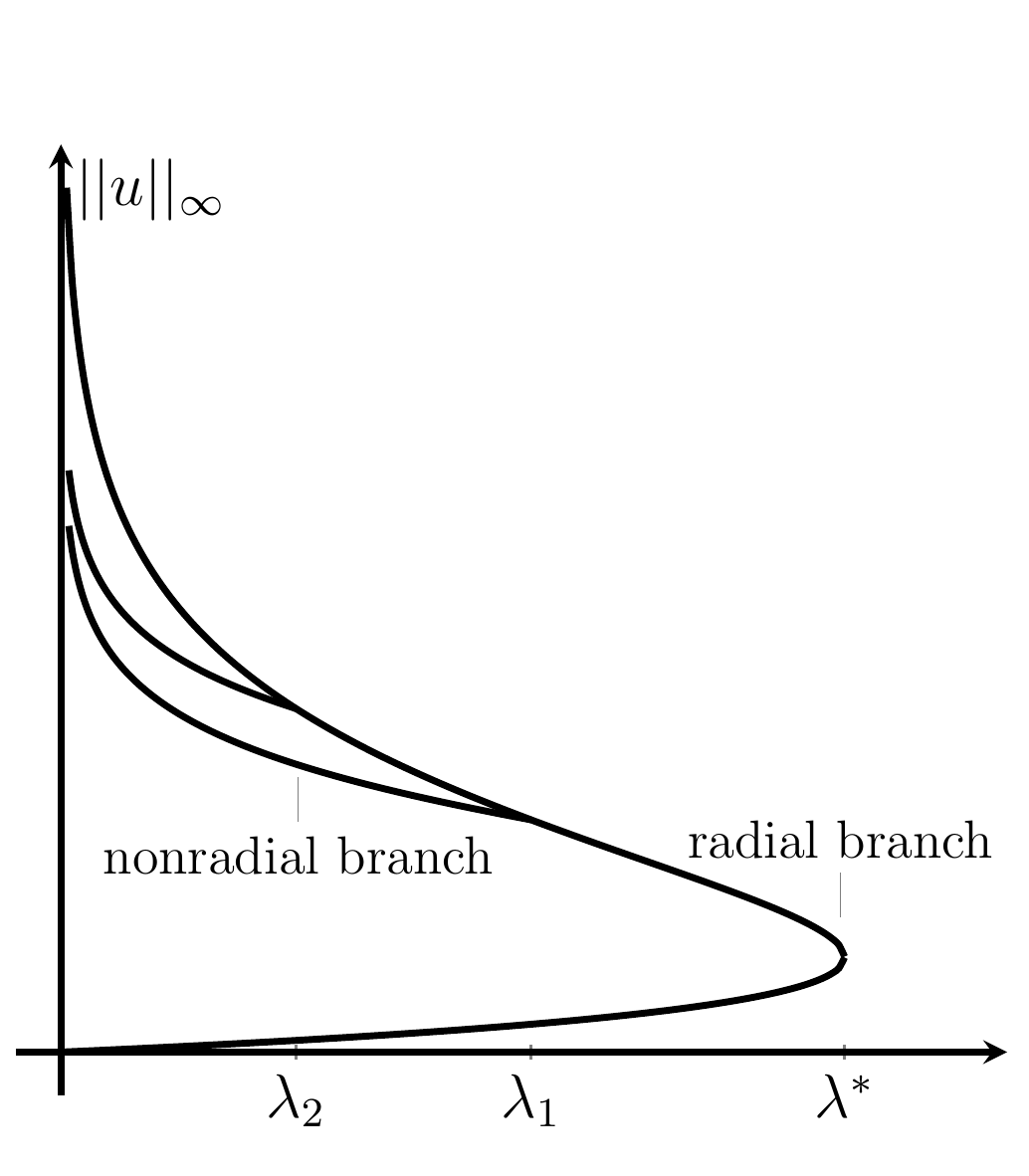}
\caption{}
\end{figure}
\begin{proof}
All the assumptions of Theorem \ref{a15} are verified and since
the equation \eqref{a6} can be explicitly solved we get the
values $\l_k$ given in \eqref{b8-bis}.
Observe that at each of the degeneracy values $\l_k$,
the Morse index of the radial solution $u_{\l,\a}$ changes.
Then  Theorem \ref{a15} implies that the bifurcation occurs
 at $(\l_k,u_{\l_k,\a})$. Moreover, setting $C(\l_k)$ as the branch of nonradial solutions bifurcating from $\l_k$, we have that it satisfies the alternative in Theorem \ref{a15}.
Observe that from \eqref{deg-exp} we have a unique bifurcation point corresponding to each value of $k\geq 1$. This implies that each branch of bifurcating solutions does not intersect the curve of radial solutions again.\\
Let us show that the branches are unbounded in
$C^{1,\g}_0(\bar B_1)$. From what we said, we only need to show that the branch $C(\l_k)$ can not stay bounded and intersect $\{0\} \times C^{1,\g}_0(\bar B_1)$ or $\{2\}\times C^{1,\g}_0(\bar B_1)$.
The case where $C(\l_k)$ is bounded and meets $\{0\} \times
C^{1,\g}_0(\bar B_1)$ cannot happen since problem \eqref{p'} has, at
$\l=0$ only the trivial solution, which is nondegenerate and
isolated.
On the other hand, the case where $C(\l_k)$ is bounded and meets $\{2\} \times
C^{1,\g}_0(\bar B_1)$ cannot happen since
problem \eqref{p'} has, when $\l=2$, a unique solution which is radially degenerate.
Then each branch $C(\l_k)$ for $k=1,\dots,j$ is unbounded.
\end{proof}
Using the result of the previous theorem we can prove a multiplicity result for problem \eqref{p'} (see Corollary \ref{c1}). To do this we need an $L^{\infty}$-estimate for solutions of \eqref{p'} when $ \l$ is bounded and bounded away from zero. This estimate can be  proved collecting the results of \cite{CL} and \cite{BCLT}  and reads as follows,
\begin{theorem}\label{t-CL}
Let $u$ be a solution of \eqref{p'} with $\a$ fixed, such that $0<c_1\leq \l\leq c_2$ for some positive constants $c_1,c_2$. Then there exists $C=C(c_1,c_2)>0$ such that
$$\nor u\nor_{L^{\infty}(B_1)}\leq C.$$
\end{theorem}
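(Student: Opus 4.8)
The plan is to argue by contradiction and to rule out blow-up in three separate regimes: at interior points other than the origin, at the origin (where the weight $|x|^\a$ degenerates), and on $\partial B_1$. Suppose there are solutions $u_n$ of \eqref{p'} with $\l=\l_n\in[c_1,c_2]$ and $M_n:=\max_{\overline B_1}u_n\to+\infty$. Since by Theorem \ref{b1} problem \eqref{p'} has no solution for $\l>2$ and only one, bounded, solution for $\l=2$, after passing to a subsequence we may assume $\l_n\in[c_1,2)$ and $\l_n\to\l_0\in[c_1,2]$. Put $\mu_n:=\left(\frac{2+\a}2\right)^2\l_n$ and $f_n:=\mu_n|x|^\a e^{u_n}=-\Delta u_n\ge0$. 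The first step is the mass bound of Proposition \ref{stima-pohozaev} (equivalently inequality \eqref{ineq} in the proof of Proposition \ref{stima-pohozaev-2}), applied with $\mu=\mu_n$:
$$\norm{f_n}_{L^1(B_1)}=\mu_n\int_{B_1}|x|^\a e^{u_n}\le2\pi\Bigl(2+\a+\sqrt{(2+\a)^2-2\mu_n}\Bigr).$$
Because $\mu_n\ge\left(\frac{2+\a}2\right)^2c_1>0$, the right-hand side is bounded by a constant $C\le8\pi\bigl(1+\tfrac\a2\bigr)$, the inequality being strict when $\l_0<2$. In particular $\norm{f_n}_{L^1(B_1)}\le C$ and $\int_{B_1}|x|^\a e^{u_n}\le C/\mu_n\le C'$; moreover, writing $u_n(x)=\int_{B_1}G(x,y)f_n(y)\,dy$ with $G$ the Green function of $-\Delta$ on $B_1$ and using $\int_{B_1}G(x,y)\,dx=\tfrac14(1-|y|^2)\le\tfrac14$, we get $\norm{u_n}_{L^1(B_1)}\le C$.

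Next I would treat the interior. On any ball $B\Subset B_1\setminus\{0\}$ the weight $|x|^\a$ is bounded away from $0$, so $\int_B e^{u_n}\le C_B$; together with $0\le\mu_n|x|^\a\le\mu_n\le C$ and the $L^1$ bound on $u_n$, the Brezis--Merle alternative (see \cite{BCLT}) applies on $B$, and each of its two nontrivial cases would force $u_n\to-\infty$ on a compact subset of $B$, impossible because $u_n>0$. Hence $u_n$ is locally uniformly bounded on $B_1\setminus\{0\}$, and blow-up can occur only at the origin. Assume then $u_n(x_n)\to+\infty$ for some $x_n\to0$. By the fine blow-up analysis for mean field equations with singular data of \cite{BCLT}, the mass concentrating at the origin is at least $8\pi\bigl(1+\tfrac\a2\bigr)$. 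If $\l_0<2$ this contradicts $\mu_n\int_{B_1}|x|^\a e^{u_n}\le C<8\pi\bigl(1+\tfrac\a2\bigr)$; if $\l_0=2$ it forces the whole mass to concentrate at $0$, so that $u_n\to-\infty$ on the compact subsets of $B_1\setminus\{0\}$, again against $u_n>0$. Either way we get a contradiction, so $u_n$ is uniformly bounded on every compact subset of $B_1$.

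It remains to rule out boundary blow-up, for which one uses the a priori estimates of \cite{CL}: near $\partial B_1$ the weight $|x|^\a$ is smooth and bounded below, and $-\Delta u_n\ge0$ with $u_n=0$ on $\partial B_1$, so the Kelvin transform combined with the moving plane method yields a fixed boundary neighbourhood $\mathcal N$ along which $u_n$ is monotone in the inward normal direction; this, with $\norm{u_n}_{L^1(B_1)}\le C$, gives $\norm{u_n}_{L^\infty(\mathcal N)}\le C$ uniformly in $n$. Combined with the interior bound above, $\norm{u_n}_{L^\infty(B_1)}\le C$, contradicting $M_n\to+\infty$. I expect the genuinely delicate points to be the localization of a possible blow-up at the singular point $0$ and the sharp mass bound $8\pi(1+\tfrac\a2)$ there, provided by \cite{BCLT}, together with the boundary a priori estimate, provided by \cite{CL}; the role of the hypothesis $\l\ge c_1>0$ is exactly to keep the total mass $\mu_n\int_{B_1}|x|^\a e^{u_n}$ strictly below (and at worst, when $\l_0=2$, equal to) the quantized threshold $8\pi(1+\tfrac\a2)$, which is what closes the argument at the origin.
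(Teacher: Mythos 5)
Your proof is correct and follows essentially the same route as the paper: the Pohozaev-type mass bound of Proposition \ref{stima-pohozaev} (made uniform by $\l\ge c_1>0$), the Brezis--Merle/\cite{BCLT} blow-up alternative for the interior (with positivity of $u$ excluding the bad alternatives), and the Chen--Li \cite{CL} moving-plane/Kelvin-transform argument near $\partial B_1$ with their hypothesis $(i)$ replaced by the integral estimate. The only difference is that you unpack the \cite{BCLT} alternative and the quantized mass at the origin explicitly (where the case split $\l_0<2$ versus $\l_0=2$ is in fact unnecessary, since the mass bound is uniformly below $8\pi(1+\tfrac{\a}{2})$ once $\mu$ is bounded away from zero), whereas the paper simply cites Theorem 1.3 of \cite{BCLT}.
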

\begin{proof}
Using the assumption on $ \l$ and estimate \eqref{stima-e^u} we can apply Theorem 1.3 of \cite{BCLT} with $V(x)=\left(\frac{2+\a}{2}\right)^2 \l>0$ getting that $\sup_Ku(x)\leq C$ for any compact set $K\subset B_1$. The $L^{\infty}$-estimate near the boundary of $B_1$ follows exactly as in Theorem 1 of \cite{CL} without assuming their assumption $(i)$. Indeed, that hypothesis can be replaced by the integral estimate \eqref{stima-e^u}. This provides the boundary estimate and ends the proof.
\end{proof}
\noindent Using Theorem \ref{t-CL} then we have
\begin{corollary}\label{c1}
Let  $\a>0$ be fixed and let $\l_k$ and $j$ be as defined in \eqref{b8-bis}.
For any $\l\in (0,\l_j)$  problem \eqref{p'} has at least $j$ nonradial solutions. Moreover, for any $\l\in (\l_{k+1},\l_{k})$  problem \eqref{p'} has at least $k$ nonradial solutions.
\end{corollary}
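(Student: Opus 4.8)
The plan is to deduce the multiplicity statement from the global bifurcation result of Theorem \ref{texp-bif1-bis} together with the a priori bound of Theorem \ref{t-CL}. Recall that $k\mapsto\l_k=2-\frac{8k^2}{(\a+2)^2}$ is strictly decreasing, so $0<\l_j<\l_{j-1}<\dots<\l_1<2$, and that by Theorem \ref{texp-bif1-bis} there are, for $k=1,\dots,j$, connected global branches $C(\l_k)$ of nonradial solutions of \eqref{p'} emanating from $(\l_k,u_{\l_k,\a})$, which are pairwise separated, which do not return to the curve of radial solutions, and which are unbounded in $(0,2)\times C^{1,\g}_0(\bar B_1)$.

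The first step is to show that, for each $k=1,\dots,j$, the branch $C(\l_k)$ contains a nonradial solution at every $\l\in(0,\l_k)$. Since $C(\l_k)\subset(0,2)\times C^{1,\g}_0(\bar B_1)$ and the $\l$-coordinate is bounded, the unboundedness of $C(\l_k)$ forces the $C^{1,\g}_0(\bar B_1)$-norm to be unbounded along it. If the $\l$-coordinate were bounded below by some $\d>0$ on $C(\l_k)$, then by Theorem \ref{t-CL} one would have $\nor u\nor_{L^{\infty}(B_1)}\le C(\d)$ for every $(\l,u)\in C(\l_k)$; since then the right-hand side $\left(\frac{2+\a}2\right)^2\l|x|^\a e^u$ of \eqref{p'} would be bounded in $L^{\infty}(B_1)$, elliptic regularity would give a uniform bound on $\nor u\nor_{C^{1,\g}_0(\bar B_1)}$, a contradiction. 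Therefore $\inf\{\l:(\l,u)\in C(\l_k)\}=0$. Being connected and containing $(\l_k,u_{\l_k,\a})$, the set $C(\l_k)$ has as $\l$-projection an interval contained in $(0,2)$, containing $\l_k$ and with infimum $0$; hence this interval contains $(0,\l_k)$. For each $\l$ in it the corresponding solution is nonradial, because $\l\neq\l_k$ and $C(\l_k)$ does not meet the curve of radial solutions except at the bifurcation point.

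It then remains to count, using the monotonicity of $k\mapsto\l_k$ and the disjointness of the branches. If $\l\in(0,\l_j)$, then $\l<\l_j\le\l_k$ for every $k=1,\dots,j$, so each of the $j$ pairwise disjoint branches $C(\l_1),\dots,C(\l_j)$ provides a nonradial solution at this $\l$; these $j$ solutions are distinct, whence \eqref{p'} has at least $j$ nonradial solutions. If $\l\in(\l_{k+1},\l_k)$, then $\l<\l_k\le\l_m$ for every $m=1,\dots,k$, so the $k$ pairwise disjoint branches $C(\l_1),\dots,C(\l_k)$ yield at least $k$ distinct nonradial solutions.

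The only delicate point is the middle step: one must rule out that the unboundedness of $C(\l_k)$ is realized with the parameter $\l$ bounded away from $0$, which is exactly what the a priori bound of Theorem \ref{t-CL} (an $L^{\infty}$-estimate for $\l$ away from $0$, promoted to $C^{1,\g}_0(\bar B_1)$ by standard elliptic estimates) prevents. Everything else is bookkeeping with the decreasing sequence $\{\l_k\}$ and the separation of the branches established in Theorem \ref{texp-bif1-bis}.
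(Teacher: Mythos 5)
Your proposal is correct and follows essentially the same route as the paper: use the global unboundedness of the branches from Theorem \ref{texp-bif1-bis}, combine the a priori bound of Theorem \ref{t-CL} with elliptic regularity to conclude that blow-up (and hence the unboundedness of each branch) can only occur as $\l\to0^+$, so each branch $C(\l_k)$ projects onto $(0,\l_k)$, and then count using the monotonicity of $\l_k$ and the separation of the branches. The extra details you supply (the contradiction argument promoting the $L^\infty$ bound to a $C^{1,\g}$ bound, the connectedness of the $\l$-projection, and the distinctness of solutions coming from disjoint branches) are exactly what the paper's shorter proof leaves implicit.
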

\begin{proof}
From Theorem \ref{texp-bif1-bis} we know that there are $j$ branches that bifurcate from the radial solutions that are unbounded in $C^{1,\g}_0(\bar B_1)$. By standard regularity theory they are also unbounded in $L^{\infty}(B_1)$. By Theorem \ref{t-CL} the solutions of \eqref{p'} can blow up only as $\l \to 0^+$.
This implies that the branch bifurcating from the value $\l_k$ exists in the interval $(0,\l_k)$.
\end{proof}

Now we give the results for solutions of problem \eqref{exp}. Let us set
\begin{equation}\label{b0}
\mu=\l \left(\frac{2+\alpha}{2}\right)^2
\end{equation}
for $\mu>0$. Of course all  the previous results follow substituting $\mu=\l\left(\frac{2+\alpha}{2}\right)^2$. First, Theorem \ref{b1} becomes
\begin{theorem}\label{??}
Let us consider the problem \eqref{exp}. We have that,\\
$i)$ if $\mu\in\left(0,\frac{(2+\alpha)^2}2\right)$  there exist exactly two radial solutions $u_+$ and $u_-$ given by
\begin{equation}
u_\pm(x) = \log\left( \frac{2\delta_\mu^{\pm}(2+\alpha)^2}{\mu (\delta_\mu^{\pm}+
\abs{x}^{2+\alpha})^2 }\right)
\end{equation}
with
\begin{equation}
\delta_\mu^{\pm} = \frac{(2+\alpha)^2 - \mu \pm
(2+\alpha)\sqrt{(2+\alpha)^2 - 2\mu} }{\mu}.
\end{equation}
The solution $u_+$ is the minimal one and $u_-$ blows up as $\mu \to 0^{+}$.\\
$ii)$ If $\mu=\frac{(2+\alpha)^2}2$ there is only the solution
\begin{equation}
u(x) = \log\left( \frac{4}{ (1 +
\abs{x}^{2+\alpha})^2 }\right).
\end{equation}
$iii)$ There is no solution if $\mu>\frac{(2+\alpha)^2}2$.
\end{theorem}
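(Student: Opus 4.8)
The statement is obtained from Theorem \ref{b1} by the change of parameter $\mu=\l\left(\frac{2+\a}2\right)^2$, under which problem \eqref{p'} becomes exactly problem \eqref{exp}. The plan is therefore: (1) transfer existence, uniqueness and nonexistence through this bijection of parameters; and (2) substitute $\l=\frac{4\mu}{(2+\a)^2}$ into the closed-form expressions of Theorem \ref{b1} and simplify.

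First I would record that $\l\mapsto\mu=\l\left(\frac{2+\a}2\right)^2$ is a strictly increasing bijection of $(0,+\infty)$ onto itself which maps $(0,2)$ onto $\left(0,\frac{(2+\a)^2}2\right)$, the value $\l=2$ to $\mu=\frac{(2+\a)^2}2$, and $(2,+\infty)$ onto $\left(\frac{(2+\a)^2}2,+\infty\right)$. Since $u$ solves \eqref{exp} with parameter $\mu$ if and only if $u$ solves \eqref{p'} with parameter $\l=\frac{4\mu}{(2+\a)^2}$, all three items of Theorem \ref{b1} translate verbatim: exactly two radial solutions when $\mu\in\left(0,\frac{(2+\a)^2}2\right)$, exactly one when $\mu=\frac{(2+\a)^2}2$, and none when $\mu>\frac{(2+\a)^2}2$, with $u_+$ the minimal solution and $u_-$ the one that blows up, now as $\mu\to0^+$ (which is equivalent to $\l\to0^+$).

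It then remains to rewrite the formulas. With $\l=\frac{4\mu}{(2+\a)^2}$ one has $4-\l=\frac{4\left((2+\a)^2-\mu\right)}{(2+\a)^2}$ and $\sqrt{4-2\l}=\frac{2}{2+\a}\sqrt{(2+\a)^2-2\mu}$, whence
\begin{equation*}
\delta_\l^\pm=\frac{4-\l\pm2\sqrt{4-2\l}}{\l}=\frac{(2+\a)^2-\mu\pm(2+\a)\sqrt{(2+\a)^2-2\mu}}{\mu}=:\delta_\mu^\pm,
\end{equation*}
which is exactly the quantity in the statement. Inserting this into $u_\pm(x)=\log\frac{8\delta_\l^\pm}{\l\left(\delta_\l^\pm+|x|^{2+\a}\right)^2}$ and using $\frac8\l=\frac{2(2+\a)^2}{\mu}$ gives $u_\pm(x)=\log\frac{2\delta_\mu^\pm(2+\a)^2}{\mu\left(\delta_\mu^\pm+|x|^{2+\a}\right)^2}$; and formula \eqref{sol-exp-2}, which does not involve $\l$, yields the solution at $\mu=\frac{(2+\a)^2}2$ directly.

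There is essentially no obstacle here beyond this bookkeeping; the only points deserving a line of care are that $16-8\l$ and $(2+\a)^2-2\mu$ vanish simultaneously (so that the three regimes match across the substitution) and that $\delta_\mu^\pm>0$ on the two-solution range, both of which are immediate since $\mu>0$ and the discriminant identity $(4-\l)^2-4(4-2\l)=\l^2>0$ transforms into $\left((2+\a)^2-\mu\right)^2-(2+\a)^2\left((2+\a)^2-2\mu\right)=\mu^2>0$.
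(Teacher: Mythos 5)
Your proposal is correct and follows exactly the paper's route: the paper obtains this theorem by the one-line observation that all the results of Theorem \ref{b1} carry over under the substitution $\mu=\l\left(\frac{2+\a}{2}\right)^2$, which is precisely the parameter change and algebraic simplification you carry out (in more detail than the paper itself).
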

Here we refer to $u_{\mu,\a}$
as the radial solution to \eqref{exp} which
blows up as $\mu\rightarrow0^+$, i.e.
\begin{equation}\label{b2b}
u_{\mu,\a}(r)=\log\left( \frac{2\delta_{\mu}(2+\alpha)^2}{\mu (\delta_{\mu}
+ \abs{x}^{2+\alpha})^2 }\right)
\end{equation}
where $\delta_{\mu}=
\frac{(2+\alpha)^2 - \mu-(2+\alpha)\sqrt{(2+\alpha)^2 -
2\mu} }{\mu}$ and $\mu\in \left(0, \frac{(2+\a)^2}2\right)$.
From Theorem \ref{t1.5} then we get
\begin{theorem}
Let $u_{\mu,\a}$ be the radial solution of \eqref{exp} defined in
\eqref{b2b}. Then $u_{\mu,\a}$ is degenerate if and only if
\begin{equation}\label{b6}
(2+\a)^2=4k^2+2\mu
\end{equation}
for some $\a>0$, $\mu\in(0,\frac{(2+\a)^2}2)$ and some integer $k\ge1$.\\
Moreover its Morse index is given by:
\begin{equation}
m(\mu,\alpha)=
\begin{cases}
1+ 2\left[ \frac1{2} \sqrt{(2+\a)^2-2\mu}\right] &\hbox{ if } \,\,\, \frac1{2} \sqrt{(2+\a)^2-2\mu} \not\in \N \\
 \sqrt{(2+\a)^2-2\mu}  -1 &\hbox{ if } \,\,\, \frac1{2} \sqrt{(2+\a)^2-2\mu} \in \N
\end{cases}
\end{equation}
and $m(\mu,\alpha)\to +\infty$ as $\a\to +\infty$.
\end{theorem}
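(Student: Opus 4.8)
The plan is to deduce this statement directly from Theorem~\ref{t1.5} and Theorem~\ref{a11} by means of the scaling \eqref{b0}, so that no new analysis is needed. First I would record the elementary dictionary between the two problems: a function $u$ solves \eqref{exp} with parameter $\mu$ if and only if it solves \eqref{p'} with $\l=\frac{4\mu}{(2+\a)^2}$, the two equations differing only by the multiplicative constant $\left(\frac{2+\a}2\right)^2$, and under this correspondence the admissible range $\mu\in\left(0,\frac{(2+\a)^2}2\right)$ matches exactly $\l\in(0,2)$. The one point that deserves a short check is that the radial solution $u_{\mu,\a}$ of \eqref{b2b}, chosen on the branch blowing up as $\mu\to0^+$, is precisely the image of the radial solution $u_{\l,\a}$ of \eqref{b3}: substituting $\mu=\l\left(\frac{2+\a}2\right)^2$ into $\delta_\mu$ gives $\delta_\mu=\frac{4-\l-\sqrt{16-8\l}}{\l}=\delta_\l$, and then $\frac{2\delta_\mu(2+\a)^2}{\mu}=\frac{8\delta_\l}{\l}$, so the two logarithmic expressions coincide and the non‑minimal branch maps to the non‑minimal branch. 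Existence of $u_{\mu,\a}$ itself is the restatement of Theorem~\ref{b1} for \eqref{exp}.

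Next I would translate the degeneracy condition. Since the linearized operator \eqref{a7} for \eqref{exp} at $u_{\mu,\a}$ coincides, up to the constant $\left(\frac{2+\a}2\right)^2$, with the linearized operator for \eqref{p'} at $u_{\l,\a}$, the function $u_{\mu,\a}$ is degenerate if and only if $u_{\l,\a}$ is; by Theorem~\ref{t1.5} this occurs exactly when $\frac{2-\l}2=\frac{4k^2}{(2+\a)^2}$ for some integer $k\ge1$. Inserting $\l=\frac{4\mu}{(2+\a)^2}$ and multiplying through by $(2+\a)^2$ yields $(2+\a)^2-2\mu=4k^2$, which is \eqref{b6}; conversely every triple $(\a,\mu,k)$ satisfying \eqref{b6} gives $\l\in(0,2)$ and a degeneracy of $u_{\l,\a}$. (The associated eigenfunctions are those of \eqref{b5a}, rewritten with $\l=\frac{4\mu}{(2+\a)^2}$.)

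For the Morse index I would use that, by Proposition~\ref{p-ni1}, the quantity controlling the two cases of \eqref{b7} is $\frac{\a+2}2\sqrt{-\nu_1(\l)}=\frac{\a+2}2\sqrt{\frac{2-\l}2}$, and under the scaling
\[
\frac{\a+2}2\sqrt{\frac{2-\l}2}=\frac{\a+2}2\cdot\frac1{2+\a}\sqrt{(2+\a)^2-2\mu}=\frac12\sqrt{(2+\a)^2-2\mu}.
\]
Substituting this expression into \eqref{b7} produces verbatim the claimed formula for $m(\mu,\a)$ in both cases. Finally, for fixed $\mu$ one has $(2+\a)^2-2\mu\to+\infty$ as $\a\to+\infty$, hence $\tfrac12\sqrt{(2+\a)^2-2\mu}\to+\infty$, and from either branch of the formula $m(\mu,\a)\to+\infty$. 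I do not expect any genuine obstacle: the whole argument is bookkeeping, and the only step requiring a moment's care is verifying that the scaling preserves the distinguished branch $u_{\mu,\a}$, i.e.\ the identity $\delta_\mu=\delta_\l$ above.
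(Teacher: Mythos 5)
Your proposal is correct and follows the paper's own route: the paper likewise obtains this theorem as an immediate consequence of Theorem \ref{t1.5} via the substitution $\mu=\l\left(\frac{2+\alpha}{2}\right)^2$ of \eqref{b0}, with your verification that $\delta_\mu=\delta_\l$ and the identity $\frac{\a+2}{2}\sqrt{\frac{2-\l}{2}}=\frac{1}{2}\sqrt{(2+\a)^2-2\mu}$ being exactly the bookkeeping the paper leaves implicit. No gaps; the extra care you devote to matching the non-minimal branches is a welcome, if minor, addition.
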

The proof is an easy consequence of Theorem \ref{t1.5}. Finally we
only have to prove Theorems \ref{texp-bif1} and Proposition
\ref{stima-pohozaev}.
\begin{proof}[\underline{Proof of Theorem  \ref{texp-bif1}}]
It follows from Theorem \ref{texp-bif1-bis} and Corollary \ref{c1}
\end{proof}

\begin{proof}[\underline{Proof of Theorem  \ref{stima-pohozaev}}]
It follows directly from Proposition \ref{stima-pohozaev-2}.
\end{proof}

\section{Some results in $\R^2$}\label{s2}
In this section, using the transformation $r\mapsto
r^\frac{2+\a}2$, we retrieve some results, partly known and partly
new, for the problem
\begin{equation}
\label{2.1}
\begin{cases}
-\Delta u = \abs{x}^{\alpha}e^u, & \hbox{ in }\R^2 \\
\int_{\R^2} \abs{x}^{\alpha}e^u < +\infty.
\end{cases}
\end{equation}
All radial solutions to \eqref{2.1} are given by
\begin{equation} \label{2.2}
 U_{\delta,\alpha}(x) =
 \log\dfrac{2(2+\alpha)^2\delta}{(\delta+\abs{x}^{2+\alpha})^2}.
\end{equation}
We want to study the linearized problem to \eqref{2.1} at
$U_\a=U_{1,\a}$, i.e.
\begin{equation}
\label{linearized}
\begin{cases}
-\Delta v = 2(\alpha +2)^2 \frac{\abs{x}^{\alpha}}{(1+\abs{x}^{2+\alpha})^2}v & \hbox{ in } \R^2\\
\int_{\R^2}|\nabla v|^2\, dx<\infty.
\end{cases}
\end{equation}
Next theorem characterizes all solutions to \eqref{linearized}. This result
was already proved in \cite{CL1} for
$\left[\frac\a2\right]\notin\N$ and in \cite{DEM} if $\a\in2\N$.
Our proof unifies the two cases and it is (according to us)
shorter.
\begin{theorem} \label{lin}
The following alternative holds:
\begin{itemize}
\item[i)] If $\alpha\notin2\N$ the space of
solutions to \eqref{linearized} has dimension $1$ and is spanned
by
\begin{equation} \label{2.3}
v(x) = \dfrac{1-\abs{x}^{2+\alpha}}{1+\abs{x}^{2+\alpha}}.
\end{equation}
\item[ii)] If $\alpha = 2(k-1)$ for some integer $k\ge1$ the space of solutions
to \eqref{linearized} has dimension $3$ and is spanned by, in
polar coordinates,
\begin{equation} \label{2.4}
v(x) = \dfrac{1-r^{2+\alpha}}{1+r^{2+\alpha}}, \,\, v_1(x) =
\dfrac{r^k\cos(k\theta)}{1+r^{2+\alpha}} \hbox{ and } v_2(x) =
\dfrac{r^k\sin(k\theta)}{1+r^{2+\alpha}}.
\end{equation}
\end{itemize}
\end{theorem}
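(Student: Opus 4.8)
The plan is to reduce the PDE \eqref{linearized} to a family of ODEs by decomposing $v$ into spherical harmonics, exactly as in the proof of Proposition \ref{a4}, and then to recognize each radial mode as a Legendre-type equation whose bounded solutions are classical. Writing $v(x)=\sum_{k\ge0}v_k(r)Y_k(\theta)$ in polar coordinates, the mode $v_k$ must solve
\begin{equation*}
-v_k''-\frac1r v_k'+\frac{k^2}{r^2}v_k=\frac{2(2+\a)^2 r^{\a}}{(1+r^{2+\a})^2}v_k \quad\text{in }(0,\infty),
\end{equation*}
together with the finite-energy condition $\int_0^\infty\bigl(r(v_k')^2+\frac{k^2}{r}v_k^2\bigr)\,dr<\infty$ coming from $\int_{\R^2}|\nabla v|^2<\infty$. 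First I would perform the change of variables $r\mapsto s=r^{\frac{2+\a}{2}}$ (the same map $r\mapsto r^{\frac{2+\a}2}$ used throughout the paper), which turns each mode equation into
\begin{equation*}
-\eta_k''-\frac1s\eta_k'+\frac{4k^2}{(2+\a)^2 s^2}\eta_k=\frac{8}{(1+s^2)^2}\eta_k,
\end{equation*}
the linearization of the standard Liouville equation $-\Delta w=8(1+|y|^2)^{-2}w$ in $\R^2$ restricted to the "angular frequency" $\beta_k:=\frac{2k}{2+\a}$. The finite-energy/Dirichlet-type condition is preserved under this transformation.

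Next I would solve each of these equations explicitly. Passing to the Emden–Fowler variable $t=\log s$ (or directly to $\xi=\frac{1-s^2}{1+s^2}$ as in the proof of Proposition \ref{p-ni1}), the equation becomes the associated Legendre equation with degree $1$ and order $\beta_k$:
\begin{equation*}
(1-\xi^2)R''-2\xi R'+\Bigl(2-\frac{\beta_k^2}{1-\xi^2}\Bigr)R=0,
\end{equation*}
whose two independent solutions are (up to constants) $\bigl(\frac{1+\xi}{1-\xi}\bigr)^{\beta_k/2}(\xi-\beta_k)$ and $\bigl(\frac{1-\xi}{1+\xi}\bigr)^{\beta_k/2}(\xi+\beta_k)$, i.e. in the $s$ variable $s^{-\beta_k}\frac{1-\beta_k-(1+\beta_k)s^2}{1+s^2}$ and $s^{\beta_k}\frac{1+\beta_k-(1-\beta_k)s^2}{1+s^2}$. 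For $k=0$ ($\beta_0=0$) the bounded-energy solution is $\frac{1-s^2}{1+s^2}$, giving $v(x)=\frac{1-|x|^{2+\a}}{1+|x|^{2+\a}}$; the second solution behaves like $\log s$ and is excluded by finiteness of the Dirichlet integral. For $k\ge1$, finite energy at $s=0$ forces us to discard the $s^{-\beta_k}$ branch (it blows up), leaving the candidate $\eta_k(s)=s^{\beta_k}\frac{1+\beta_k-(1-\beta_k)s^2}{1+s^2}$. Going back to the original variable this equals $r^{k}\cdot(\text{bounded function of }r^{2+\a})$, which lies in the finite-energy space \emph{precisely} when $\beta_k=\frac{2k}{2+\a}$ is an integer and the harmonic $Y_k$ matches — that is, when $\frac{2k}{2+\a}\in\N$; writing $\a=2(m-1)$ this singles out $k=m$ (and its multiples, which however give higher-frequency solutions that must be rechecked against smoothness — only $\beta_k$ integer and the correct parity survive). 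I would check that when $\a=2(k-1)$ the functions $\frac{r^k\cos k\theta}{1+r^{2+\a}}$ and $\frac{r^k\sin k\theta}{1+r^{2+\a}}$ indeed solve \eqref{linearized} and have finite Dirichlet integral, and conversely that for $\a\notin2\N$ no nonradial mode produces a finite-energy solution because $\beta_k\notin\N$ makes $r^{k}$ incompatible with the required regularity/decay.

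The main obstacle I anticipate is the bookkeeping at $s=0$ and $s=\infty$: one must argue carefully that the correct solvability/finite-energy condition is $\int_0^\infty(r(v_k')^2+\frac{k^2}r v_k^2)\,dr<\infty$ rather than merely $L^\infty$ boundedness, and then show that this condition, transported through the change of variables, rules out exactly one of the two Legendre solutions in each mode while admitting the other only under the stated arithmetic condition on $\a$. The subtlety is that the surviving solution $r^{2k/(2+\a)}\times(\text{bounded})$ always has finite energy near $r=0$ as an $H^1$ function, so the real constraint distinguishing $\a\in2\N$ from $\a\notin2\N$ comes from demanding that $v$ be a genuine (smooth, or at least $H^1_{loc}(\R^2)$ with the right behavior) solution of the two-dimensional problem \eqref{linearized}, equivalently that the spherical-harmonic coefficient $v_k(r)Y_k(\theta)$ extends across the origin — and this forces $\frac{2k}{2+\a}\in\N$. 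Once that dichotomy is pinned down the count (dimension $1$ versus dimension $3$) follows immediately, and the explicit spanning functions are read off from the Legendre solutions above.
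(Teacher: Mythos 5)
Your reduction to the mode ODEs and the change of variables $s=r^{\frac{2+\a}{2}}$ is exactly the paper's route, and your explicit Legendre-type solutions are correct for generic $\beta_k=\frac{2k}{2+\a}$. The genuine gap is in the ``only if'' direction: you locate the decisive constraint at the origin (extension/smoothness of $v_k(r)Y_k(\theta)$ across $0$, which you claim forces $\beta_k\in\N$), but that is not where the obstruction lies, and $\beta_k\in\N$ is not the right selection rule. The surviving candidate $\eta_k(s)=s^{\beta_k}\frac{(1+\beta_k)-(1-\beta_k)s^2}{1+s^2}$, i.e. $v_k(r)=r^{k}\,\frac{(1+\beta_k)-(1-\beta_k)r^{2+\a}}{1+r^{2+\a}}$, is perfectly admissible near the origin for \emph{every} $k\ge1$: it is $O(r^k)$ there, $r^kY_k(\theta)$ is smooth, and an $H^1_{loc}$ function solving the equation away from $0$ is a weak solution across $0$, so no arithmetic condition on $\a$ arises at the origin. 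What kills it is the behavior at infinity: it grows like $s^{\beta_k}$ unless the coefficient $1-\beta_k$ vanishes, so the condition $\int_{\R^2}|\nabla v|^2<\infty$ in \eqref{linearized} forces exactly $\beta_k=1$, i.e. $2k=2+\a$, i.e. $\a=2(k-1)$. With your criterion, for $\a=2(k-1)$ every multiple $j=k,2k,3k,\dots$ has $\beta_j\in\N$ and would pass, giving a kernel of dimension larger than $3$; the parenthetical appeal to ``smoothness and parity'' cannot repair this, because those higher modes are smooth at the origin — they fail only the decay/energy requirement at infinity. (The paper sidesteps this by quoting the known kernel of the Liouville linearization \eqref{2.6} for the angular parameter in $\{0,1\}$ and excluding all other values by Sturm comparison.)

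A second, smaller point: your pair of Legendre solutions degenerates precisely at the relevant value $\beta_k=1$ (the two displayed functions are proportional there, just as at $\beta_k=0$), so at $\beta_k=1$ you must exhibit a genuinely independent second solution and discard it (it behaves like $s^{-1}$ at $0$ or like $s$ at infinity, so the energy condition does exclude it), exactly as you did with the logarithmic solution for $k=0$. Once the selection rule is corrected to $\beta_k=1$ and this degeneracy is handled, your computation does yield the stated dimensions and spanning functions.
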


\begin{proof}
We decompose a solution of \eqref{linearized} using the
spherical harmonic functions, we get that $v$ is a solution of
\eqref{linearized} if and only if
$v_k(r):=\int_{S^1}v(r,\theta)Y_k(\theta)\, d\theta$ is a solution
of
 \begin{equation}\label{2.5}
\begin{cases}
-v_k''-\frac 1r v'_k+\frac {k^2}{r^2}v_k = 2\left(2+\alpha\right)^2\frac{r^{\alpha}}{(1+r^{2+\a})^2}v_k, \quad \hbox{ in }(0,+\infty) \\
v_k'(0)=0 \hbox{ if } k=0, v_k(0) = 0 \,\,\hbox {if } k \geq 1 \, \hbox { and } 
\int_0^{+\infty}r(v_k')^2\, dr<\infty
\end{cases}
\end{equation}
where $ Y_k(\theta)$ denotes a $k$-th spherical harmonic
function. Letting $\eta_k(r)=v_k(r^{\frac 2{2+\a}})$, we have that
$\eta_k$ solves
 \begin{equation}\label{2.6}
\begin{cases}
-\eta_k''-\frac 1r \eta'_k  +\frac {4k^2}{(2+\a)^2r^2}\eta_k =\frac{8}{(1+r^2)^2}\eta_k, \quad \hbox{ in }(0,+\infty) \\
\eta_k'(0)=0 \hbox{ if } k=0, \eta_k(0) = 0 \,\,\hbox {if } k \geq 1 \, \hbox { and } 
\int_0^{+\infty}r(\eta_k')^2\, dr<\infty.
\end{cases}
\end{equation}
We know that the unique solutions of \eqref{2.6} are given by
\begin{equation}\label{2.7a}
\eta_1(r)=\frac{r}{1+r^2}\text{ for } \frac {4k^2}{(2+\a)^2}=1\quad  \text{and} \quad
\eta_0(r) = \frac{1-r^2}{1+r^2} \text{ for } k=0.
\end{equation}

It follows from the Sturm comparison theorem that there are no other solutions to \eqref{2.6} besides those. Therefore, \eqref{2.6} admits a solution if, and only if, $\frac {4k^2}{(2+\a)^2} \in \{0,1\}$, which means that we must have $k =0$ or $\a = 2(k-1)$. Turning back to \eqref{2.5} we have the solutions
\begin{align*}
&v_0(r) = \frac{1-r^{2+\a}}{1+r^{2+\a}} \qquad \text{if} \:\: \a \neq 2(k-1)  \,\, \forall \,\, k \in \N\\
&v_0(r) = \frac{1-r^{2+\a}}{1+r^{2+\a}}; \,\, v_k(r) = \frac{r^{k}}{1+r^{2+\a}} \qquad \text{if} \:\: \a = 2(k-1) \,\ \text{for some} \,\, k \in \N
\end{align*}
and the proof is now complete.
\end{proof}
Next corollary computes the Morse Index of the solution $U_\a$,
extending to the case $\a\in\N$ the result in \cite{CL1}.
\begin{corollary}
Let $U_{\alpha}$ be the solution of \eqref{2.1}. Then its Morse
index is equal to
\begin{equation}
m(\alpha) =
\begin{cases}
1+ 2\left[ \frac{\alpha+2}{2} \right] &\hbox{ if } \,\,\, \frac{\alpha+2}{2} \not\in \N \\
1 + \alpha  &\hbox{ if } \,\,\, \frac{\alpha+2}{2} \in \N
\end{cases}
\end{equation}
where $[x]$ denotes the greatest integer less than or equal to $x$.
In particular we have that the Morse index of $U_{\alpha}$ changes as $\alpha$ crosses the set of even integers and also that $m(\alpha) \to \infty$ as $\alpha \to \infty$.
\end{corollary}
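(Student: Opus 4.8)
The plan is to deduce the Morse index of $U_\a$ directly from the classification of solutions to the linearized problem in Theorem \ref{lin}, using the same spherical-harmonic decomposition plus the change of variable $r\mapsto r^{2/(2+\a)}$ that transforms the weighted equation into the autonomous one. First I would recall that the Morse index of $U_\a$ equals the number of negative eigenvalues (with multiplicity) of
\begin{equation*}
-\Delta v - 2(\a+2)^2\frac{|x|^\a}{(1+|x|^{2+\a})^2}v = \L v \quad\text{in }\R^2,\qquad \int_{\R^2}|\nabla v|^2<\infty.
\end{equation*}
As in the proof of Theorem \ref{a11}, one passes to the weighted eigenvalue problem with weight $1/|x|^2$ (which is legitimate here because $U_\a$ has finite Morse index, so one may localize to an annulus and use Propositions \ref{B1}--\ref{B111}), decompose into modes $v=\sum_k v_k(r)Y_k(\theta)$, and set $\eta_k(r)=v_k(r^{2/(2+\a)})$. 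The $k$-th mode contributes to the Morse index precisely when the radial problem
\begin{equation*}
-\eta_k''-\tfrac1r\eta_k' - \tfrac{8}{(1+r^2)^2}\eta_k = \tfrac{4}{(2+\a)^2}\bigl(\L-k^2\bigr)\tfrac{\eta_k}{r^2}\quad\text{in }(0,+\infty)
\end{equation*}
has a solution with $\L<0$.

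The key input is that the autonomous radial linearized equation $-\eta''-\tfrac1r\eta' - \tfrac{8}{(1+r^2)^2}\eta = \tfrac{\nu}{r^2}\eta$ on $(0,+\infty)$, with the finite-energy/boundedness conditions, has a (necessarily unique, by Sturm-type arguments as in Lemma \ref{B3}) solution for exactly two values of $\nu$: the value $\nu=0$, realized by $\eta_0(r)=\frac{1-r^2}{1+r^2}$ (which corresponds to the dilation invariance and gives the $k=0$ threshold), and the value $\nu=-1$, realized by $\eta_1(r)=\frac{r}{1+r^2}$; no $\nu<-1$ works. Translating via $\nu = \tfrac{4}{(2+\a)^2}(\L-k^2)$, mode $k$ carries a negative eigenvalue $\L$ if and only if $\tfrac{4}{(2+\a)^2}(\L-k^2) \le 0$ with $\L<0$, i.e. if and only if $k^2 < \tfrac{(2+\a)^2}{4}$, equivalently $k < \tfrac{\a+2}{2}$; and when $k=\tfrac{\a+2}{2}\in\N$ the threshold value $\nu=0$ (i.e. $\L=0$) is attained, so that mode does not count toward the Morse index. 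Counting: $k=0$ always contributes $1$; each integer $1\le k<\tfrac{\a+2}{2}$ contributes $2$ (the eigenspace $\{\cos k\theta,\sin k\theta\}$). Hence if $\tfrac{\a+2}{2}\notin\N$ the index is $1+2\bigl[\tfrac{\a+2}{2}\bigr]$, while if $\tfrac{\a+2}{2}\in\N$, the admissible $k$ are $1,\dots,\tfrac{\a+2}{2}-1=\tfrac{\a}{2}$, giving $1+2\cdot\tfrac{\a}{2}=1+\a$. This is exactly the claimed formula, and $m(\a)\to\infty$ as $\a\to\infty$ is immediate since $\bigl[\tfrac{\a+2}{2}\bigr]\to\infty$. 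The statement that the index jumps as $\a$ crosses an even integer follows because at $\a=2(k-1)$ a new mode $k$ reaches the threshold and then, for $\a$ slightly larger, contributes $2$ to the count.

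The main obstacle is the passage from the $H^1(\R^2)$ eigenvalue problem to the weighted problem with the singular weight $1/|x|^2$, and the correct counting of the mode that sits exactly at the threshold $\nu=0$; this is precisely the borderline situation warned about in Remark \ref{rem1} and Proposition \ref{P1}, where the infimum need not be attained. The clean way around it is to work on annuli $A_\e = B_R(0)\setminus B_\e(0)$ (and, because $\R^2$ is unbounded, also truncate at large radius $R$), invoke continuity and monotonicity of eigenvalues in the domain exactly as in Lemma \ref{B100}, and observe that the threshold mode contributes a zero (not negative) eigenvalue in the limit, hence is excluded. Once this bookkeeping is done carefully, the corollary is a direct translation of Theorem \ref{lin} into eigenvalue-counting language, and the proof is a short paragraph invoking the change of variables and the two explicit solutions in \eqref{2.7a}.
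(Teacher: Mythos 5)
Your proposal is correct and follows essentially the same route as the paper: reduce the Morse index to counting negative values $\Lambda$ of the weighted problem with singular weight $1/|x|^2$ (as in Theorem \ref{a11} and Lemma \ref{B100}), decompose in spherical harmonics, apply the change of variables $r\mapsto r^{2/(2+\a)}$, and use the classification \eqref{2.6}--\eqref{2.7a} to force $\Lambda=k^2-\left(\frac{2+\a}{2}\right)^2$, hence $k<\frac{2+\a}{2}$, with multiplicity $1$ for $k=0$ and $2$ for $k\ge1$. Only a small bookkeeping slip: at the borderline $k=\frac{\a+2}{2}\in\N$ the excluded mode corresponds to $\nu=-1$ with $\Lambda=0$ (not to $\nu=0$), but this does not affect the count or the conclusion.
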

\begin{proof}

We have that the Morse index of $U_{\a}$ coincides with the number of negative eigenvalues (counted with their multiplicity) 
$\Lambda$ of
\begin{align}\label{2.8a}
\begin{cases}
-\Delta V - 2(2+\a)^2\frac{|x|^{\a}}{(1+|x|^{2+\a})^2}V=\Lambda V  & \hbox{ in }\R^2\\
\int_{\R^2} |\nabla V|^2\, dx <\infty .&
\end{cases}
\end{align}
Reasoning exactly as in the proof of Theorem \ref{a11} we have that the Morse index of $U_{\a}$ is given by the negative $\Lambda$ satisfying
\begin{align}\label{2.8}
\begin{cases}
-\Delta V - 2(2+\a)^2\frac{|x|^{\a}}{(1+|x|^{2+\a})^2}V=\frac{\Lambda }{|x|^2}V  & \hbox{ in }\R^2\\
 \int_{\R^2} |\nabla V|^2\, dx <\infty .&
\end{cases}
\end{align}
 Let $\Lambda < 0$ such that  \eqref{2.8} has a solution. Then  using the spherical harmonics we are led to the equation
\begin{align} \label{2.9}
\begin{cases}
-\psi_k''(r) - \frac{1}{r}\psi_k'(r) + \frac{k^2}{r^2}\psi_k(r) - 2(2+\a)^2\frac{r^\alpha}{(1+r^{2+\a})^2}\,\psi_k(r) = \frac{\Lambda}{r^2}\psi_k(r)\,, \quad \text{in} \,\,(0,\infty) \\
\psi_k'(0)=0 \hbox{ if } k=0, \psi_k(0) = 0 \hbox { if } k \geq 1, \hbox{ and } 
\int_0^{+\infty}r(\psi_k')^2\, dr<\infty
\end{cases}
\end{align}
and using the transformation $r \mapsto r^{\frac{2}{2+\a}}$, as in the previous theorem, we get
\begin{align} \label{2.10}
\begin{cases}
 -\eta_k''(r) - \frac{1}{r}\eta_k'(r)  -
 8\,\frac{\eta_k(r)}{(1+r^2)^2}=4\frac{\Lambda - k^2}{(2+\a)^2}\frac{\eta_k(r)}{r^2}\,, \quad \text{in}\,\, (0,\infty) \\
\eta_k'(0)=0 \hbox{ if } k=0, \eta_k(0) = 0 \hbox { if } k \geq 1, \hbox{ and } 
\int_0^{+\infty}r(\eta_k')^2\, dr<\infty.
\end{cases}
\end{align}
Now, by \eqref{2.6} and \eqref{2.7a}, we must have
$$ 4\frac{\Lambda - k^2}{(2+\a)^2} = -1 $$
and since $\Lambda < 0$ necessarily $k < \frac{2+\alpha}{2}$. Conversely, for each $k < \frac{2+\alpha}{2}$ we have $\Lambda = k^2 - \left(\frac{2+\alpha}{2}\right)^2 < 0$ an eigenvalue of \eqref{2.8}. Since the dimension of the eigenspace of the Laplace-Beltrami operator on $S^1$ is $2$ for any $k \ge 1$, the proof is now complete.
\end{proof}

\end{document}